\numberwithin{equation}{section}
\numberwithin{figure}{section}
\theoremstyle{plain}
\newtheorem{thm}{\protect\theoremname}
  \theoremstyle{definition}
  \newtheorem{defn}[thm]{\protect\definitionname}
  \theoremstyle{remark}
  \theoremstyle{plain}
  \newtheorem{lem}[thm]{\protect\lemmaname}
  \theoremstyle{plain}
  \newtheorem{prop}[thm]{\protect\propositionname}
\date{\today}
\newcommand {\no}{\noindent}
  \providecommand{\definitionname}{Definition}
  \providecommand{\lemmaname}{Lemma}
  \providecommand{\propositionname}{Proposition}
  \providecommand{\remarkname}{Remark}
\providecommand{\theoremname}{Theorem}
\begin{document}

\title{Scalar conservation laws with multiple rough fluxes}
\begin{abstract}
\noindent We study pathwise entropy solutions  for scalar conservation laws with inhomogeneous fluxes and quasilinear multiplicative rough path dependence. 
This extends the previous work  of Lions, Perthame and Souganidis who considered  spatially independent and  inhomogeneous fluxes with multiple paths and a  single driving singular path respectively. The approach is motivated by the theory of stochastic viscosity solutions which relies on special test functions constructed by inverting locally the flow of the stochastic characteristics. For conservation laws this is best implemented at the level of the kinetic formulation which we follow here.
\end{abstract}

\author{Benjamin Gess}

\address{Department of Mathematics \\
University of Chicago \\
Chicago, IL 60637 \\
USA }

\email{gess@uchicago.edu}

\author{Panagiotis E. Souganidis }

\address{Department of Mathematics \\
University of Chicago \\
Chicago, IL 60637 \\
USA }

\email{souganidis@math.uchicago.edu }

\keywords{Stochastic scalar conservation laws, rough paths, random dynamical systems, kinetic solutions.}

\subjclass[2000]{H6015, 35R60, 35L65.}

\thanks{Benjamin Gess has been partially supported by the research project ``Random dynamical systems and regularization by noise for stochastic partial differential equations'' funded by the German Research Foundation. Panagiotis Souganidis is supported by the NSF grants DMS-0901802 and DMS-1266383.}

\maketitle

\section{Introduction}

\noindent We are interested in the stochastic scalar conservation law (SSCL for short)
\begin{equation}
\begin{cases}
du+ \div_x ({A}(x,u) \circ d{z})=0 \quad\text{ in }\quad\R^{N}\times(0,T),\\[2mm]
u=u_{0}\quad\text{ on }\quad\R^{N}\times\{0\},
\end{cases}\label{eq:scl}
\end{equation}
where  ${A}$ is a smooth $N\times M$ matrix and 
${z}=(z_1,\ldots, z_M)$ is a geometric $\a$-Hölder rough path; the precise assumptions are given in Section \ref{sec:review_kinetic}. As a particular example  may consider \eqref{eq:scl} with  $A$ a diagonal $N\times N$ matrix  and ${z}$ an $N$-dimensional Brownian motion enhanced to a rough path, in which case \eqref{eq:scl} becomes 
\begin{equation}\label{eqn:std_SSCL}
\begin{cases}
du + \sum _{j=1}^N \partial _{x_j} A_j(x,u) \circ dz_j \quad\text{ in }\quad\R^{N}\times(0,T),\\[2mm]
u=u_{0}\quad\text{ on }\quad\R^{N}\times\{0\}.
\end{cases}
\end{equation}
\smallskip

\noindent  There is, of course, a close connection with the theory of stochastic viscosity solutions.  Indeed when $N=1$ 
if $v$ solves the stochastic Hamilton-Jacobi equation 
$dv+A(\partial_{x}v,x)\circ dz=0,$
then, 
formally, $u=\partial_{x}v$ satisfies the SSCL 
$du+\partial_{x}(A(u,x))\circ dz=0.$
\smallskip

\noindent  SSCL also arise in several applications. A concrete example is provided by  the  theory of mean field games developed by Lasry and Lions (\cite{LL06,LL06-2,LL07}). It turns out that   the mean field limit,  as $L\to\infty$, of the empirical law of the solution  $(X^{1},\dots,X^{L}) \in \R^{N\times L}$ of the stochastic differential equation
\[
dX_{t}^{i}={\bf \s}\Big(X_{t}^{i},\frac{1}{L}\sum_{j\ne i}\d_{X_{t}^{j}}\Big)\circ d{z}_{t}\qquad \text{for} \ i=1,\dots,L,
\]
where $\s$ is a $N\times M$ Lipschitz matrix  on $\R^{N}\times\mcP(\R^{N})$  with adjoint $\s^*$, $\mcP(\R^N)$ is the space of probability measures on $\R^N$  and ${ z}=(z_1,\ldots,z_M)$ is an $N$-dimensional Brownian motion, 
converges to a measure $\pi_{t}$ which evolves according to 
\begin{equation}\label{eq:SCL-intro}
  dm +  \text{div}_x [ \s^*(x,m) \circ d{z}]=0;     
\end{equation}
\no note that  \eqref{eq:SCL-intro} is not a ``standard'' conservation law (i.e. of the type \eqref{eqn:std_SSCL}) unless $\s^*$ is a diagonal matrix. For this reason we study here SCCL of the more complex type \eqref{eq:scl}.

\no The study of pathwise solutions to SSCL was initiated by Lions, Perthame and Souganidis (\cite{LPS13}) who introduced the notion of pathwise (stochastic)  entropy solutions for "standard" SSCL with $x$-independent fluxes and multiple continuous paths, that is    
\begin{equation} \label{eq:x-indep}
du+ \sum_{i=1}^{N}\partial_{x_{i}}A_{i}(u)\circ dz_{i}=0,
\end{equation}
for   ${z}=(z_1,\ldots,z_N)$  continuous. 
This work was subsequently extended in \cite{LPS14} to $x$-dependent fluxes driven by a single continuous path  $z$, that is to SSCL of the form 
\begin{equation}
du+{\displaystyle \sum_{i=1}^{N}\partial_{x_{i}}A_{i}(u,x)\circ dz=0}.\label{eq:single-bm}
\end{equation}

\no The main contribution of this paper  is the extension of the well-posedness theory of the pathwise (stochastic) entropy solutions to general inhomogeneous fluxes  with multi-dimensional driving rough paths and non-standard SSCL of the type \eqref{eq:scl}. 
\smallskip

\no The theory  is based on the general concepts/philosophy around the theory of stochastic viscosity solutions
for fully nonlinear first- and second-order PDE including stochastic Hamilton-Jacobi equations which was introduced and developed by Lions and Souganidis in \cite{LS98,LS98-2,LS02,LS00-2,LS00}.  The latter is based 
on a notion of solution which does not depend on the (blowing up) derivatives and integrals of the driving paths. This is accomplished with the introduction of a class of test functions which are short-time smooth solutions of the stochastic Hamilton-Jacobi part of  the equation (constructed typically by the method of characteristics)  and, when inserted in the equation, 
at least formally ``eliminate'' the stochastic part. This is basically a local change of the unknown which however cannot be done unless there is regularity. 
\smallskip


\no The notion of pathwise (stochastic) entropy solutions builds on the above. It is based on the kinetic formulation of conservation laws and test functions which are propagating along the characteristics of the corresponding linear transport equation.  The latter are defined globally in time and, in this aspect, the test functions are easier to construct since it is not necessary to invert characteristics.  There  are, however, new difficulties since the test functions are not easily localizable. 
For $x$-independent fluxes the characteristics can be solved explicitly and, hence, it is possible to keep track of the cancellations that are taking place.  Such explicit solutions are not available  when there is spatial dependence. In the presence of one driving force the characteristics can be expressed, after a change of time, in terms of the ones for a conservation law without the rough signals, and this assists in taking care of the cancellations. When dealing with multiple driving paths such expressions are not possible and it becomes necessary to use more sophisticated tools from the rough path theory.  In this note we establish the uniqueness and existence of pathwise solutions. For the latter,  following   \cite{P02}, we prove the existence of what we call generalized pathwise entropy solutions, which are easier to construct using weak limits in the kinetic equations, and then show that they are indeed pathwise solutions. To obtain the weak limits we prove some estimates about the defect measure which are new even in the ``deterministic'' setting. 
It is worth remarking that proving strong compactness requires BV-type estimates which are, however, not at all obvious due to the inhomogeneity of the equations and the singularities of the paths. Such estimates were obtained in \cite{LPS13} but  there the fluxes were $x$-independent.  
\smallskip

\no As already outlined above, the concept of pathwise rough entropy/kinetic  solutions   builds upon the 
characteristic system corresponding to the kinetic formulation. Due to the roughness of the driving signal ${z}$ the this system becomes a stochastic differential equation (SDE). In order to justify this notion of a solution and to construct solutions we will consider approximating problems based on smoothed signals $z^n \in C^\infty([0,T];\R^M)$. An essential ingredient of the construction will thus be the continuous dependence of the characteristics on the driving signal ${z}$. However, it is well-known that in general SDE do not depend continuously on the paths of the driving signal.  
This is where rough paths theory enters the picture. The relevance of rough paths in this work is that they provide 
stronger topologies on the space of the driving signals making the solution to SDE, in our case the system of characteristics, continuous with respect to the driving signal.
\smallskip

\no We remark that the term ``stochastic'' may not be the best choice to characterize  the solutions we study which rely on rough paths and use very little if at all stochasticity. A better name, which we use here,  is pathwise rough entropy/kinetic solutions. To avoid any confusion we emphasize that the solutions we study here coincide with the ones  in \cite{LPS13, LPS14} when considering  the setting of these references.
\smallskip

\noindent Recently Debussche and Vovelle \cite{DV10},  Feng and Nualart \cite{FN08} and Chen, Ding and Karlsen \cite{CDK12} (see also Debussche, Hofmanova and Vovelle \cite{DHV13} and Hofmanova \cite{H13-5}) have put forward a theory of weak entropy solutions of scalar conservation laws with It\^o-type semilinear and not quasilinear stochastic dependence. Such problems do not appear amenable to a pathwise theory. Our results do not cover the equations studied in \cite{DV10, FN08} and vice versa. We refer to Section 6 of \cite{LPS13} for a discussion of these issues. A rough paths approach to SSCL with linear transport noise has been developed in \cite{FG14}.
\smallskip

 \noindent We conclude the introduction  emphasing that this paper builds upon the ideas of   \cite{LPS13, LPS14} and, of course,  the general methodology of the work on stochastic viscosity solutions \cite{LS98,LS98-2,LS02,LS00-2,LS00}.  We expect that several extensions may be possible
without a real change in the general strategy. For example, it should take only few and straightforward technical modifications to extend  the results to equations like \eqref{eq:scl} plus  a semilinear term.   We leave such extensions to the interested reader.



\subsection*{Organization of the paper.}
The paper is organized as follows: In Section~\ref{sec:review_kinetic} we state the assumptions, we review briefly some facts about entropy solutions and their kinetic formulation  and we introduce the notion of pathwise rough entropy/kinetic  solutions. Their uniqueness is  proved in Section~\ref{sec:unique}; a key step in the proof is a technical lemma about solutions of differential equations with rough paths which is presented in the Appendix.  The construction of pathwise rough entropy/kinetic  solutions, which 
requires 
some some new estimates, 
is presented in Section~\ref{sec:existence}. 
In the two Appendices  we recall some facts  from the theory of rough paths used in the main body of the paper and prove an estimate that is used throughout the paper.

\subsection*{Notation and terminology} We work in the $N$-dimensional Euclidean space $\R^N.$  If $x\in \R^N$ and $\xi \in \R$,  $\left\Vert 
\begin{array}{cc}
x\\
\xi
\end{array}\right\Vert $ denotes the norm of $(x,\xi)$ in $\R^{N+1}$. 
 If ${b}=(b^1,\ldots,b^N):\R^N \to \R^N$ is a smooth vector field, $\text{div} b:=\Sigma_{i=1}^N \partial_{x_i} b^i.$ We say that $A \subset [0,T]$ is null  if it has zero Lebesgue measure.  The space of bounded measures in $\R^{N}\times \R$ is denoted by $\mcM(\R^{N}\times \R).$ For a set $C \subset [0,T]$ we define
 $\D(C):=\{(s,t):\ s,t\in C,\ s\le t\}.$
For $\d>0$, $\text{Lip}^\d(\R^{N}\times\R;\R^{N})$ is the set of functions with $k=0, \ldots\lfloor\delta\rfloor$ bounded derivatives and $\d-\lfloor\delta\rfloor$ Hölder continuous $\lfloor\delta\rfloor$-th derivative. If $\Delta =\{a=t_0 \leq t_i\leq \ldots \leq t_M=b\}$ is a partition of $[a,b]\subset [0,T]$,  then $\|\Delta\| :=\{|t_{i+1}-t_1|: i=0,\ldots,M-1\}.$ 



\section{The kinetic formulation and pathwise rough 
entropy/kinetic solutions}\label{sec:review_kinetic}


\subsection*{Assumptions} We present the assumptions we need to make to study \eqref{eq:scl}. 


\no The first is that, for some $\a\in(0,1),$
\begin{equation}\label{z}
 z  \text{ is  an $\a$-Hölder geometric rough path,}
 \end{equation}
 that is 
 $z \in C^{0,\a}([0,T];G^{\lfloor\frac{1}{\a}\rfloor}(\R^{N})).$ For some background on rough paths and the meaning of $G^{\lfloor\frac{1}{\a}\rfloor}(\R^{N})$ we refer to the Appendix \ref{app:RP}. 
 \smallskip
 
 \no As as far as the flux is concerned we assume  that, for some $M\in \N$,  
\begin{equation}\label{ab} 
 A \in C^1(\R^N \times \R; \R^{N\times M}).
 \end{equation}
 

\no  We consider the matrix $a \in \R^{N \times M}$ and vector  $b \in \R^M$ given by 
$${a}(x,\xi)=:A_\xi(x, \xi) \in \R^{N\times M} \  \text{and} \ b(x, \xi) =: \div A(x, \xi);$$
note that the $(ij)$-element $a^{ij}$ of the matrix ${a}(x,\xi)$ 
is $A_\xi^{ij}(x, \xi)$ and $(\div A)^j := \sum_{i=1}^N  \partial_{x_i} A^{ij},$ where, for $i=1,\dots,N$ and $j=1,\dots,M$,   $A^{ij}$ is the $(ij)$ element of $A$. 


\smallskip

\no We assume that, for some $\gamma >\frac{1}{\a}\ge1$,
\begin{equation}
a,b\in\Lip^{\g+2}(\R^{N}\times\R),\label{eq:regularity_assumptions}
\end{equation}
and, for all $x\in\R^{N},$
\begin{equation}
{b}(x,0)=0. 
\label{eq:b_ass}
\end{equation}
\smallskip

\subsection*{Kinetic formulation}  The well established theories of entropy solutions and kinetic solutions (see, for example, Dalibard \cite{D06} and Perthame \cite{P02}) extend easily to problems with smooth driving paths, that is  for $z\in C^{1}([0,T];\R^{M}).$  With this assertion at hand, below we recall the basic facts. 
\smallskip
\no Given the nonlinear function
{\begin{equation}
\chi(x,\xi,t):=\chi(u(x,t),\xi):=\left\{ \begin{array}{l}
+1 \ \text{ if } \ 0\leq\xi\leq u(x,t),\\[2mm]
-1 \ \text{ if } \ u(x,t)\leq\xi\leq0,\\[2mm]
\;0 \ \text{ otherwise},
\end{array}\right.\label{eq:char_fctn}
\end{equation}
we may rewrite \eqref{eq:scl} in its kinetic form. Recalling that here we are assuming smooth driving signals, to simplify the notation we  write
$$a_{i}(x,\xi,t) :=\sum_{j=1}^M(\partial_{u}A^{i,j})(x,\xi)\dot{z}^{j}(t),$$
and
$$b(x,\xi,t) :=\sum_{i=1}^N\sum_{j=1}^M (\partial_{x_{i}}A^{i,j})(x,\xi)\dot{z}^{j}(t)=\div A(x,\xi)\cdot \dot{z}(t).$$
In view  of  \eqref{eq:b_ass} we have
\[
b(x,0,t)={b}(x,0)\cdot \dot{z}(t)=0.
\]
Fix $T>0$.  The kinetic form of \eqref{eq:scl} is  
\begin{equation}\label{eq:kinetic_form-1}
\begin{cases}
\partial_t \chi+ a(x,\xi,t)\cdot D_{x}\chi-b(x,\xi,t)\partial_{\xi}\chi  = \partial_{\xi} m \ \text{ in } \ \R^{N}\times\R\times(0,T),\\
\chi  =\chi(u_{0}(\cdot),\cdot) \ \text{ on } \ \R^{N}\times\R\times\{0\},
\end{cases}
\end{equation}
where
\begin{equation}\label{measure0} 
m \  \text{is a bounded  nonnegative measure on }  \R^{N}\times\R\times[0,T]; 
\end{equation}
the precise bounds on the mass of the measure are stated later.

\no Due to the Hamiltonian structure of \eqref{eq:kinetic_form-1}, we may rewrite it as
\begin{equation}
\begin{cases}
\partial_t 
\chi+\div_{x}({\displaystyle a(x,\xi,t)\chi)-\partial_{\xi}(b(x,\xi,t)\chi)}  =\partial_{\xi}m\text{ in }\R^{N}\times\R\times(0,T),\\[1mm]
\label{eq:kinetic_form-1-1}
\chi  =\chi(u_{0}(\cdot),\cdot)\text{ on }\R^{N}\times\R\times\{0\}.\nonumber 
\end{cases}
\end{equation}

\subsection*{Derivation of a stable notion of kinetic solutions}
As stated above the notion of kinetic solutions  is not well defined for rough driving signals, since the coefficients $a,b$ blow up with $\dot{z}$. On the other hand,  following \cite{LPS13, LPS14}, we observe that the linearity of \eqref{eq:kinetic_form-1}  in $\chi$ suggests that we may use the characteristics of \eqref{eq:kinetic_form-1}
to derive a stable notion of solution, 
which we will call pathwise rough entropy/kinetic solutions. 
\smallskip

\no For now  we continue assuming  that  $z\in C^1([0,T];\R^M)$ and, for every $t_{0}\in [0,T]$, we consider the corresponding forward/backward transport equation
\begin{equation}\label{eq:transport}
\begin{cases}
\partial_t \vr_{t_{0}}+{\displaystyle a(x,\xi,t)\cdot D_{x}\vr_{t_{0}}-b(x,\xi,t)\partial_{\xi}\vr_{t_{0}}}  =0 \ \text{ in } \ \R^{N}\times\R\times \R,\\[1mm]
\vr_{t_{0}}  =\vr^{0} \ \text{ on } \ \R^{N}\times\R\times\{t_{0}\},
\end{cases}
\end{equation}
which is rewritten as
 \begin{equation}\label{eq:transport_1}
\begin{cases}
\partial_t \vr_{t_{0}}+\sum_{i=1}^N \sum_{j=1}^M (\partial_{u}A^{i,j})(x,\xi)\partial_{x_{i}}\vr_{t_{0}}\dot{z}^{j}(t)-\sum_{i=1}^N \sum_{j=1}^M(\partial_{x_{i}}A^{i,j})(x,\xi)\partial_{\xi}\vr_{t_{0}}\dot{z}^{j}(t)  =0  \\
\vr_{t_{0}}(t_{0})  =\vr^0.  
\end{cases}
\end{equation}
We note that in view of the linearity of \eqref{eq:transport}  and \eqref{eq:transport_1}, it makes sense to consider solutions starting at $t_0$ and existing forward and backward in time. 
 \smallskip
  
\no For each $(y,\eta)\in\R^{N+1}$, $t_0\ge 0$ and $\vr^0 \in C_{c}^{\infty}(\R^{N}\times\R)$ we consider the solution $\vr_{t_{0}}=\vr_{t_{0}}(x,y,\xi,\eta,t)$ to \eqref{eq:transport} with  $\vr_{t_{0}}(\cdot,y,\cdot,\eta,t_0)=\vr^0(\cdot-y,\cdot-\eta)$. 
Defining the convolution along characteristics by
\[
\vr_{t_{0}}\ast\chi(y,\eta,t):=\int\vr_{t_{0}}(x,y,\xi,\eta,t)\chi(x,\xi,t)dxd\xi,
\]
we find (see  Lemma \ref{lem:kinetic-pathwise} below) 
\[
\partial_{t}\vr_{t_{0}}\ast\chi(y,\eta,t)=-\int\partial_{\xi}\vr_{t_{0}}(x,y,\xi,\eta,t)m(x,\xi,t)dxd\xi\  \ \text{in }\R^{N}\times\R\times(0,T).
\]

\subsection*{The characteristics} It is a classical fact that the solution of \eqref{eq:transport} can be expressed in terms of the associated (backward) characteristics starting at $t_0 \geq 0$. 
Note that,  in contrast to \eqref{eq:transport},  the characteristic equations are well-defined also for rough driving signals $z$. 

\no We assume next that ${z}$ is an $\a$-Hölder geometric rough path and we consider, for $i=1,\dots,N$,  the rough differential equations
\begin{equation}
\begin{cases}\label{eq:rough-char}
d{Y}_{(t_{0},y,\eta)}^{i}(t) = \sum_{j=1}^M a^{i,j}(Y_{(t_{0},y,\eta)}(t),\z_{(t_{0},y,\eta)}(t))d{ z}^j(t), \quad  Y_{(t_{0},y,\eta)}^{i}(t_{0}) =y^{i} ,\\[1.5mm]
d{\z}_{(t_{0},y,\eta)}(t)  =-\sum_{i=1}^N \sum_{j=1}^M (\partial_{x_{i}}A^{i,j})(Y_{(t_{0},y,\eta)}(t),\z_{(t_{0},y,\eta)}(t))d{ z}^{j}(t), \quad 
 \z_{(t_{0},y,\eta)}(t_{0})  =\eta.
\end{cases}
\end{equation}
We note that, in light of \eqref{eq:regularity_assumptions} and \cite{FV10}, there exits  a unique solution to \eqref{eq:rough-char} and, for $Y_{(t_{0},y,\eta)}(t):=(Y_{(t_{0},y,\eta)}^{1}(t), \ldots, Y_{(t_{0},y,\eta)}^{N}(t))$,  the map  
\[
\left(\begin{array}{cc}
Y_{t_{0}}\\
\z_{t_{0}}
\end{array}\right)(t):\left(\begin{array}{cc}
y\\
\eta
\end{array}\right)\mapsto\left(\begin{array}{cc}
Y_{(t_{0},y,\eta)}(t)\\
\z_{(t_{0},y,\eta)}(t)
\end{array}\right),
\]
is a flow of homeomorphisms on $\R^{N+1}.$

\no For $t\ge t_0$  set
\[
\left(\begin{array}{cc}
Y_{(t,y,\eta)}(t_{0})\\
\z_{(t,y,\eta)}(t_{0})
\end{array}\right):=\left[\left(\begin{array}{cc}
Y_{t_{0}}\\
\z_{t_{0}}
\end{array}\right)(t)\right]^{-1}\left(\begin{array}{cc}
y\\
\eta
\end{array}\right)
\]

\no and observe that, in view of \eqref{eq:b_ass}, 
\[
\z_{(t_{0},y,0)}\equiv 0,
\]
and, hence, for all $t$,
\begin{equation}\label{eq:sign_char}
\sgn(\z_{(t_{0},y,\eta)}(t))=\sgn(\eta). 
\end{equation}
\no For each time $t_{1}\ge0$ and for  $i=1,\dots,N$ we  consider the backward characteristics
\begin{align*}
d{X}_{(t_{1},x,\xi)}^{i}(t) & = \sum_{j=1}^M a^{i,j}(X_{(t_{1},x,\xi)}(t),\Xi_{(t_{1},x,\xi)}(t))d{z}^{t_{1},j}(t),\\
d{\Xi}_{(t_{1},x,\xi)}(t) & =-\sum_{i=1}^N \sum_{j=1}^M(\partial_{x_{i}}A^{i,j})(X_{(t_{1},x,\xi)}(t),\Xi_{(t_{1},x,\xi)}(t))d{z}^{t_{1},j}(t),\\
X_{(t_{1},x,\xi)}^{i}(0) & =x^{i} \  \text{and} \ 
\Xi_{(t_{1},x,\xi)}(0) =\xi.
\end{align*}
where, for $t\in[0,t_1]$,   ${z}^{t_{1}}$ is the time-reversed rough path, that is  
$$
{z}^{t_{1}}(t):={z}(t_{1}-t). 
$$
Then, for all $t\in[t_{0},t_{1}],$  we have
$$X_{(t_{1},Y_{(t_{0},y,\eta)}(t_{1}),\z_{(t_{0},y,\eta)}(t_{1}))}(t_{1}-t)  =Y_{(t_{0},y,\eta)}(t)$$
and
$$\Xi_{(t_{1},Y_{(t_{0},y,\eta)}(t_{1}),\z_{(t_{0},y,\eta)}(t_{1}))}(t_{1}-t)  =\Xi_{(t_{0},y,\eta)}(t).$$
In particular, the inverse of the homeomorphism  $\left(\begin{array}{cc}
Y_{t_{0}}\\
\z_{t_{0}}
\end{array}\right)(t_{1})$ is given by 
\[
\left(\begin{array}{cc}
Y_{(t_1,x,\xi)}(t_{0})\\
\z_{(t_1,x,\xi)}(t_{0})
\end{array}\right)=\left[\left(\begin{array}{cc}
Y_{t_{0}}\\
\z_{t_{0}}
\end{array}\right)(t_{1})\right]^{-1}\left(\begin{array}{cc}
x\\
\xi
\end{array}\right)=\left(\begin{array}{cc}
X_{(t_{1},x,\xi)}(t_{1}-t_{0})\\
\Xi_{(t_{1},x,\xi)}(t_{1}-t_{0})
\end{array}\right).
\]
Hence, the solution $\vr_{t_{0}}$ to \eqref{eq:transport} with $\vr_{t_{0}}(\cdot,y,\cdot,\eta,t_0)=\vr^0(\cdot-y,\cdot-\eta)$ is given,  for all $t\in[0,T]$, by
\begin{equation}
\vr_{t_{0}}(x,y,\xi,\eta,t)
=\vr^{0}\left(\begin{array}{cc}
X_{(t,x,\xi)}(t-t_{0})-y\\
\Xi_{(t,x,\xi)}(t-t_{0})-\eta
\end{array}\right)
=\vr^{0}\left(\begin{array}{cc}
Y_{(t,x,\xi)}(t_{0})-y\\
\z_{(t,x,\xi)}(t_{0})-\eta 
\end{array}\right).
\label{eq:transport_stable}
\end{equation}
\smallskip

\subsection*{Pathwise rough entropy/kinetic solutions}  We introduce the definition of the pathwise rough entropy/kinetic solution.

\begin{defn}
\label{def:path_e-soln}Let $u_{0}\in(L^{1}\cap L^2)(\R^{N})$. A function $u\in L^{\infty}([0,T];L^{1}(\R^{N}))$ is a pathwise rough entropy/kinetic  solution to 
 \eqref{eq:scl}, if there exists a nonnegative bounded measure $m$ on $\R^{N}\times\R\times[0,T]$ such that, for all $t_{0}\ge0$, all test functions $\vr_{t_{0}}$ given by \eqref{eq:transport_stable} with $\vr^{0}\in C_{c}^{\infty}(\R^{N+1})$ and $\vp\in C_{c}^{\infty}([0,T)),$ 
 $$ \int_{0}^{T}\partial_{t}\vp(r)(\vr_{t_{0}}\ast\chi)(y,\eta,r)dr+\vp(0)(\vr_{t_{0}}\ast\chi)(y,\eta,0)\\
  =\int_{0}^{T}\int\vp(r)\partial_{\xi}\vr_{t_{0}}(x,y,\xi,\eta,r)m(x,\xi,t)dxd\xi dr.$$
\end{defn}

\smallskip
\no Throughout the paper we will use a different almost pointwise in form of the definition. Since this  will be used several times in the paper, we state it as a separate Proposition.

\begin{prop}\label{rmk:rough_kinetic} Let $u_{0},m$ be as in Definition \ref{def:path_e-soln}. Then $u\in L^{\infty}([0,T];L^{1}(\R^{N}))$ is a pathwise rough entropy/kinetic solution to \eqref{eq:scl} if and only if there exists a  null set $\mcN\subseteq[0,T]$  such that  $0\not\in\mcN$ and, for all $t_{0}\ge0$, $(s,t)\in \D\left([0,T]\setminus\mcN\right)$ and $(y,\eta) \in \R^{N+1}$ and all test functions $\vr_{t_{0}}$ given by \eqref{eq:transport_stable} with $\vr^{0}\in C_c^{\infty}(\R^{N+1})$, 
\begin{align}
\vr_{t_{0}}\ast\chi(y,\eta,t) - \vr_{t_{0}}\ast\chi(y,\eta,s)=& -\int_{s}^{t}\int\partial_{\xi}\vr_{t_{0}}(x,y,\xi,\eta,r)m(x,\xi,r)dxd\xi dr.\label{eq:gen_kinetic_integrated-1}
\end{align}
\end{prop}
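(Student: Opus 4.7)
The equivalence is an instance of the standard correspondence between a distributional-in-time formulation and its integrated pointwise-in-time version, but carried out uniformly in the parameters $(t_{0},y,\eta,\vr^{0})$. The plan is first to fix the parameters and reduce the statement to a one-dimensional question about a scalar function and a finite signed measure on $[0,T]$, then to run the familiar BV-argument, and finally to extract a single null set $\mcN$ by a separability argument. Concretely, fix $t_{0}\ge 0$, $(y,\eta)\in\R^{N+1}$ and $\vr^{0}\in C_{c}^{\infty}(\R^{N+1})$ and set
\[
F(r):=(\vr_{t_{0}}\ast\chi)(y,\eta,r),\qquad \mu(E):=\int_{\R^{N}\times\R\times E}\partial_{\xi}\vr_{t_{0}}(x,y,\xi,\eta,r)\,m(dx,d\xi,dr),
\]
for Borel $E\subset[0,T]$. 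Since $m$ is finite and $\partial_{\xi}\vr_{t_{0}}$ is bounded with compact support in $(x,\xi)$ uniformly in $r$ (a consequence of the homeomorphism property of the flow and \eqref{eq:transport_stable}), $\mu$ is a finite signed measure on $[0,T]$, and the equation in Definition \ref{def:path_e-soln} reads
\[
\int_{0}^{T}\vp'(r)F(r)\,dr+\vp(0)F(0)=\int_{[0,T]}\vp(r)\,d\mu(r)\qquad\forall\,\vp\in C_{c}^{\infty}([0,T)).
\]

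For the direction $(\Leftarrow)$, I would take the pointwise identity, use that $0\notin\mcN$ and set $s=0$ to get $F(t)=F(0)-\mu([0,t])$ for a.e. $t\in[0,T]$; substituting into the left-hand side above and applying Fubini to swap the $dr$ and $d\mu(r')$ integrals, the inner integral of $\vp'(r)$ over $r\in[r',T]$ produces $-\vp(r')$, and the remaining boundary term cancels $\vp(0)F(0)$, yielding the distributional identity. For the direction $(\Rightarrow)$, the distributional identity asserts that, for each fixed parameter tuple, $F$ has distributional derivative $-\mu$ on $(0,T)$ with initial trace $F(0)$; hence $F$ agrees Lebesgue-a.e. with the BV representative $\tilde F(r):=F(0)-\mu([0,r])$. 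Excluding the Lebesgue-null set where $F\neq\tilde F$ together with the at most countably many atoms of $\mu$ gives a null set $\mcN_{(t_{0},y,\eta,\vr^{0})}\subset[0,T]$ on whose complement $F(t)-F(s)=-\mu([s,t])$, which is exactly \eqref{eq:gen_kinetic_integrated-1}.

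The main obstacle is producing a \emph{single} null set $\mcN$ that works simultaneously for all $(t_{0},y,\eta,\vr^{0})$. My plan is to pick a countable set $\{(t_{0}^{k},y^{k},\eta^{k},\vr^{0,k})\}_{k\in\N}$ that is dense in $[0,\infty)\times\R^{N+1}\times C_{c}^{\infty}(\R^{N+1})$, where $C_{c}^{\infty}(\R^{N+1})$ is equipped with its inductive-limit topology (so density is taken within each $C_{c}^{\infty}(B_{j})$ in the $C^{1}$-norm), and to set $\mcN:=\bigcup_{k}\mcN_{(t_{0}^{k},y^{k},\eta^{k},\vr^{0,k})}\setminus\{0\}$, which is still null and avoids $0$. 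To extend \eqref{eq:gen_kinetic_integrated-1} from the countable family to arbitrary parameters one needs continuity in the parameters of both sides of the identity for each fixed $s,t\in[0,T]\setminus\mcN$. For the left-hand side this follows from the continuity of the rough flow $(Y_{t_{0}},\z_{t_{0}})$ in $(t_{0},y,\eta)$ granted by \eqref{eq:regularity_assumptions} and the rough-path theory in \cite{FV10}, combined with the representation \eqref{eq:transport_stable} and the fact that $\chi(\cdot,\cdot,r)\in L^{1}(\R^{N}\times\R)$ uniformly in $r$ by $u\in L^{\infty}([0,T];L^{1}(\R^{N}))$; the continuity of the right-hand side follows from the same flow continuity together with $m\in\mcM(\R^{N}\times\R\times[0,T])$. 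The delicate point is choosing the topology on $C_{c}^{\infty}(\R^{N+1})$ strong enough to ensure joint continuity of $\vr^{0}\mapsto\vr_{t_{0}}$ yet weak enough to admit a countable dense subset, and this is precisely what the inductive-limit topology provides.
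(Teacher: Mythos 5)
Your proof is correct and follows essentially the same route as the paper's: obtain a parameter-dependent null set via a Lebesgue-differentiation/BV argument, pass to a countable dense family of parameters, and extend by continuity of the rough flow. The paper implicitly absorbs the $(y,\eta)$-translation into the choice of $\vr^{0}$ (since $\vr_{t_{0}}(\cdot,y,\cdot,\eta,\cdot)$ with seed $\vr^{0}$ equals $\vr_{t_{0}}(\cdot,0,\cdot,0,\cdot)$ with seed $\vr^{0}(\cdot-y,\cdot-\eta)$, so $C^{1}$-density of the countable set $B$ already covers all shifts), whereas you include $(y,\eta)$ explicitly in the countable dense parameter set; both work. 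One small caution: your claim that $\partial_{\xi}\vr_{t_{0}}$ has compact support in $(x,\xi)$ uniformly in $r$ is not a consequence of the homeomorphism property alone but of the uniform rough-path bound on the displacement of the characteristics (the estimate recorded as \eqref{eq:unif_bound} in the paper, which follows from Lemma \ref{lem:hoelder_rp}); worth citing that rather than the flow property.
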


\no Note that applying Lebesgue's differentiation theorem both for right-handed and centered averages yields  that the entropy defect measure $m$ does not have point masses on $[0,T]\setminus\mcN$. In particular, the integrals appearing in \eqref{eq:gen_kinetic_integrated-1} are well-defined. 
\begin{proof}
An application of Lebesque's differentiation theorem implies the claim  for a null  $\mcN\subseteq [0,T]$ possibly depending on $t_0$ and $\vr^0$. We then choose countable  dense subset $A\subseteq [0,T]$ and a countable $B\subseteq C_c^\infty(\R^{N+1})$ 
such that,  for each $\vr^0 \in  C_c^\infty(\R^{N+1})$, there exists  a sequence $\vr^{0,n} \in B$ with uniformly compact support, such that $\vr^{0,n} \to \vr^0$ in $C^1(\R^{N+1}).$ 
\smallskip
   
\no   Since $A,B$ are countable, there exists a null $\mcN\subseteq[0,T]$ such that \eqref{eq:gen_kinetic_integrated-1} holds for all $t_0\in A$, $\vr^0\in B$. For arbitrary $t_0\in [0,T]$, $\vr^0 \in  C_c^\infty(\R^{N+1})$ we may then choose approximating sequences  $t_0^n \in A$, $\vr^{0,n} \in B$ with $t_0^n\to t$, $\vr^{0,n}\to\rho$ in $C^1$
and $\vr^{0,n}$ having uniformly compact support. Then \eqref{eq:gen_kinetic_integrated-1} is satisfied for all $t_0^n$,  $\vr^{0,n}$, that is 
   \begin{align}
   \vr^n_{t_{0}^n}\ast\chi(y,\eta,t) - \vr^n_{t_{0}^n}\ast\chi(y,\eta,s)=& -\int_{s}^{t}\int\partial_{\xi}\vr^n_{t_{0}^n}(x,y,\xi,\eta,r)m(x,\xi,r)dxd\xi dr.\label{eq:gen_kinetic_integrated-2}
   \end{align}
   
\no   In view of  \eqref{eq:transport_stable} and the continuity in time of the characteristics $(X,\Xi)$ (see Appendix \ref{app:RP})  we may take the limit $n\to \infty$ in the left hand side of \eqref{eq:gen_kinetic_integrated-2}. As for the right hand side goes we note that 
   \begin{align*}
	\partial_{\xi}\vr_{t_{0}^n}^n(x,y,\xi,\eta,t) 
	&= D_x\vr^{0,n}\left(\begin{array}{cc}
     X_{(t,x,\xi)}(t-t_{0}^n)-y\\
     \Xi_{(t,x,\xi)}(t-t_{0}^n)-\eta
     \end{array}\right)\partial_\xi X_{(t,x,\xi)}(t-t_{0}^n)\\
     &+\partial_\xi\vr^{0,n}\left(\begin{array}{cc}
          X_{(t,x,\xi)}(t-t_{0}^n)-y\\
          \Xi_{(t,x,\xi)}(t-t_{0}^n)-\eta
          \end{array}\right)\partial_\xi \Xi_{(t,x,\xi)}(t-t_{0}^n).
   \end{align*}
  The continuity of the characteristics 
   yields
     $$	\partial_{\xi}\vr_{t_{0}^n}^n(x,y,\xi,\eta,t) \to 	\partial_{\xi}\vr_{t_{0}}(x,y,\xi,\eta,t)  \ \text{ in } \  C^1(\R^{N+1})$$
Since the $\vr^{0,n}$'s  have uniformly compact support, we may thus take the limit $n\to \infty$ in the right hand side of \eqref{eq:gen_kinetic_integrated-2} to conclude.
\end{proof}

\no Next we show that, for smooth paths, the notions of kinetic and pathwise rough entropy/kinetic solutions are equivalent.

\begin{lem}
\label{lem:kinetic-pathwise}Assume that $z\in C^{1}([0,T];\R^{N})$ and  $u\in L^{\infty}([0,T];L^{1}(\R^{N})).$  Then $u$ is a pathwise rough entropy/kinetic solution to \eqref{eq:scl} if and only if  $u$ is a kinetic solution.
\end{lem}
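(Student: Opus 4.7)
\medskip
\noindent\textbf{Proof plan.}
\medskip

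The plan is to exploit that, for $z\in C^1([0,T];\R^N)$, the linear transport equation \eqref{eq:transport} is classically well-posed by the method of characteristics: $\vr_{t_0}$ is a smooth solution whose spatial profile remains compactly supported at every time, with support propagated by the smooth characteristic flow of \eqref{eq:rough-char}. All integrations by parts below are therefore classical, and the lemma reduces to a direct integration-by-parts relation between \eqref{eq:kinetic_form-1-1} and \eqref{eq:transport}.

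For the forward direction, assume $u$ is a kinetic solution. I would fix $t_0\in[0,T]$, $\vr^0\in C_c^\infty(\R^{N+1})$, $(y,\eta)\in\R^{N+1}$ and $\vp\in C_c^\infty([0,T))$, and test the distributional form of \eqref{eq:kinetic_form-1-1} against the admissible test function $\phi(x,\xi,t):=\vr_{t_0}(x,y,\xi,\eta,t)\,\vp(t)$. Since $\vr_{t_0}$ solves \eqref{eq:transport}, the flux contributions collapse to
\begin{equation*}
\partial_t\phi + a\cdot D_x\phi - b\,\partial_\xi\phi=\bigl[\partial_t\vr_{t_0}+a\cdot D_x\vr_{t_0}-b\,\partial_\xi\vr_{t_0}\bigr]\vp+\vr_{t_0}\,\partial_t\vp=\vr_{t_0}\,\partial_t\vp.
\end{equation*}
Substituting this identity and applying Fubini yields exactly the integrated form of Definition \ref{def:path_e-soln}.

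For the converse, the above integration by parts is reversible, so the pathwise identity is equivalent to \eqref{eq:kinetic_form-1-1} being satisfied in the distributional sense, but only when tested against the restricted class $\phi=\vr_{t_0}\vp$. The main obstacle is to extend this to arbitrary $\psi\in C_c^\infty(\R^N\times\R\times[0,T))$ by density. I would use the explicit flow representation \eqref{eq:transport_stable}: in the flow coordinates $(\tilde x,\tilde\xi):=(Y_{(t,x,\xi)}(t_0),\z_{(t,x,\xi)}(t_0))$ the test function $\vr_{t_0}\vp$ becomes the tensor product $\vr^0(\tilde x-y,\tilde\xi-\eta)\vp(t)$. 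Since tensor products are $C^1$-dense in $C_c^\infty(\R^{N+1}\times[0,T))$ and the map $(x,\xi)\mapsto(\tilde x,\tilde\xi)$ is a smooth diffeomorphism on compacts, the restricted class is $C^1$-dense in $C_c^\infty(\R^N\times\R\times[0,T))$. A naive time-partition-of-unity argument would fail to yield $C^1$-convergence (because the $\partial_t\vp_k$ blow up at the rate of the mesh), so this flow-straightening change of variables is essential. Once density is established, passing to the limit in the kinetic equation, using that $\chi\in L^\infty$ and $m$ is a finite measure with approximants supported in a common compact set, recovers the full distributional kinetic equation.
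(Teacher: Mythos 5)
Your forward direction is essentially the paper's argument: test the distributional kinetic identity against $\phi=\vr_{t_0}\vp$, use that $\vr_{t_0}$ solves the transport equation so the flux terms collapse, and Fubini gives Definition~\ref{def:path_e-soln}. (The paper phrases this by first applying Lebesgue differentiation to reach the integrated form of Proposition~\ref{rmk:rough_kinetic} and then inserting $\vp=\vr_{t_0}$, but the content is the same.)

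For the converse, however, you take a genuinely different route. The paper does \emph{not} prove this direction by unwinding the integration by parts at all: it instead appeals to the uniqueness theorem for pathwise solutions (Theorem~\ref{thm:unique}, proved later) together with the known existence of kinetic solutions from \cite{D06,P02} and the already-established forward implication. Concretely: given a pathwise solution $u$, pick the kinetic solution $\tilde u$ with the same data, use the forward direction to see $\tilde u$ is also pathwise, and conclude $u=\tilde u$ by uniqueness. Your proposal instead proves the converse directly by showing the restricted class $\{\vr_{t_0}\vp\}$ is dense: straighten the characteristics via the smooth space-time diffeomorphism $(x,\xi,t)\mapsto(Y_{(t,x,\xi)}(t_0),\z_{(t,x,\xi)}(t_0),t)$, reduce to tensor products $g(\tilde x,\tilde\xi)\vp(t)$, use that linear combinations of tensor products are $C^1$-dense with uniformly controlled supports, and pass to the limit in the (linear) distributional identity using $\chi\in L^\infty$ and $m\in\mcM$. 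This is correct and self-contained; its main benefit is that it avoids the forward reference to Theorem~\ref{thm:unique}, making the equivalence logically independent of the contraction estimate, at the cost of a density argument that the paper sidesteps entirely. Two small points to tighten: you should say \emph{linear span of} tensor products (individual tensor products are not dense, but linearity of the identity covers this), and the diffeomorphism you invoke must be the full space-time map including the $t$ variable, since the straightening coordinates $(\tilde x,\tilde\xi)$ depend on $t$.
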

\begin{proof}
Let $u$ be a kinetic solution to \eqref{eq:scl}. Then,  for all $\vp\in C_{c}^{1}(\R^{N}\times\R\times [0,T)),$
$$\int_{0}^{T}\int\chi\left(\partial_t \vp+a(x,\xi,r)\cdot D_{x}\vp-b(x,\xi,r)\partial_{\xi}\vp\right)d\xi dxdr+\int\chi(x,\xi,0)\vp(x,\xi,0)d\xi dx\\
 =\int_{0}^{T}\int m\partial_{\xi}\vp d\xi dxdr.$$

\no It follows that  there exists a null set $\mcN\subseteq[0,T]$ such that  $0\not\in\mcN$ and, for all  $(s,t)\in \D\left([0,T]\setminus\mcN\right)$ and  all $\vp\in C_{c}^{1}(\R^{N}\times\R\times [0,T]),$
\begin{align*}
&\int_{s}^{t}\int\chi (x,\xi,r) \left(\partial_t\vp(x,\xi,r) + a(x,\xi,r)\cdot D_{x}\vp(x,\xi,r)-b(x,\xi,r)\partial_{\xi}\vp(x,\xi,r)\right)d\xi dxdr + \\  
& \int\chi(x,\xi,t)\vp(x,\xi,t)d\xi dx - 
 \int\chi(x,\xi,s)\vp(x,\xi,s)d\xi dx =\int_{s}^{t}\int m\partial_{\xi}\vp d\xi dxdr.
\end{align*}

\no Choosing $\vp=\vr_{t_{0}}$ yields
\begin{align*}
 & \int\chi(x,\xi,t)\vr_{t_{0}}(x,y,\xi,\eta,t)d\xi dx -  \int\chi(x,\xi,s)\vr_{t_{0}}(x,y,\xi,\eta,s)d\xi dx =\int_{s}^{t}\int m\partial_{\xi}\vr_{t_{0}}d\xi dxdr,
\end{align*}
which by Remark \ref{rmk:rough_kinetic} implies that $\chi$ is a pathwise rough entropy/kinetic solution to \eqref{eq:scl}.
\smallskip

\no In Theorem \ref{thm:unique} below we prove the uniqueness of pathwise rough entropy/kinetic solutions. Combined with the existence of kinetic solutions (see  \cite{D06,P02}) this implies that every pathwise rough entropy/kinetic  solution is a ``classical'' kinetic solution.
\end{proof}


\section{The uniqueness of pathwise rough entropy/kinetic 
solutions}\label{sec:unique}
\no We discuss here the following theorem that yields  the uniqueness of pathwise rough entropy/kinetic solutions.

\begin{thm}\label{thm:unique}
Let $u^{(1)},u^{(2)}  \in L^{\infty}([0,T];L^{1}(\R^{N}))$ be two pathwise rough entropy/kinetic  solutions to \eqref{eq:scl} with initial values $u_{0}^{1},u_{0}^{2}\in(L^{1}\cap L^{2})(\R^{N})$. Then, for a.e. $t\in [0,T],$
\begin{equation}\label{eq:gen_kinetic_unique}
  \|u^{(1)}(t)-u^{(2)}(t)\|_{L^1(\R^d)} \le \|u^{1}_0-u^{2}_0\|_{L^1(\R^d)}.
\end{equation}
\end{thm}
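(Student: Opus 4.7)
The plan is to adapt Perthame's kinetic doubling-of-variables proof of $L^1$-contraction to the pathwise rough setting, using the characteristic-transport test functions provided by Proposition~\ref{rmk:rough_kinetic}. The cornerstone identity is that, integrated in $\xi$, $(\chi^{(1)}-\chi^{(2)})^2$ recovers $|u^{(1)}-u^{(2)}|$, so that
$$\|u^{(1)}(t)-u^{(2)}(t)\|_{L^1(\R^N)} = \int_{\R^N\times\R}\bigl(\chi^{(1)}-\chi^{(2)}\bigr)^2(y,\eta,t)\,dy\,d\eta,$$
with $\chi^{(i)}:=\chi(u^{(i)},\cdot)$. The inequality \eqref{eq:gen_kinetic_unique} will be extracted from an evolution identity for a regularization of this quantity together with a sign analysis of the defect measure contribution.

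First, I fix $t\in(0,T]\setminus\mcN$ and apply Proposition~\ref{rmk:rough_kinetic} to each $\chi^{(i)}$ with $t_0=t$ and $\vr^{0}=\rho_\epsilon\in C_c^\infty(\R^{N+1})$ a standard mollifier; set $g^{(i)}_\epsilon(y,\eta,r):=(\vr_{t}\ast\chi^{(i)})(y,\eta,r)$. By the remark following Proposition~\ref{rmk:rough_kinetic} the entropy measures $m^{(i)}$ have no time atoms on $[0,T]\setminus\mcN$, so each $g^{(i)}_\epsilon(y,\eta,\cdot)$ is continuous and of bounded variation. The Stieltjes product rule applied to $(g^{(1)}_\epsilon-g^{(2)}_\epsilon)^2$, followed by integration in $(y,\eta)$, yields
\begin{align*}
& \int\bigl(g^{(1)}_\epsilon-g^{(2)}_\epsilon\bigr)^2(y,\eta,t)\,dy\,d\eta - \int\bigl(g^{(1)}_\epsilon-g^{(2)}_\epsilon\bigr)^2(y,\eta,0)\,dy\,d\eta \\
& \qquad = -2\int_0^t\!\int\bigl(g^{(1)}_\epsilon-g^{(2)}_\epsilon\bigr)(y,\eta,r)\int\partial_{\xi}\vr_{t}\,d(m^{(1)}-m^{(2)})\,dy\,d\eta.
\end{align*}

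The next step is to pass $\epsilon\to 0$, and here the Hamiltonian structure of \eqref{eq:scl} is decisive: from $\mathrm{div}_x a=\mathrm{div}_x A_\xi=\partial_\xi\mathrm{div}_x A=\partial_\xi b$, the characteristic field of \eqref{eq:rough-char} is divergence-free in $(y,\eta)$, so the flow $(Y_{t_0},\z_{t_0})(\cdot)$ is volume-preserving on $\R^{N+1}$. A Lagrangian change of variables through this flow identifies $g^{(i)}_\epsilon$ with the Eulerian $\rho_\epsilon$-mollification of the characteristic pullback of $\chi^{(i)}$ to time $t_0=t$, and a second volume-preserving change of variables in $(y,\eta)$ converts the two boundary integrals on the left into $\|u^{(1)}(t)-u^{(2)}(t)\|_{L^1}$ and $\|u_0^{(1)}-u_0^{(2)}\|_{L^1}$ respectively.

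The main obstacle is the analysis of the right-hand measure term. Using the chain-rule identity $\partial_{\xi}\vr_{t}=-(\partial_\xi Y,\partial_\xi\z)\cdot\nabla_{(y,\eta)}\vr_{t}$ together with integration by parts in $(y,\eta)$ transfers the $\xi$-derivative onto $g^{(1)}_\epsilon-g^{(2)}_\epsilon$, producing an expression that, in the limit, should pair $\partial_\xi(\chi^{(1)}-\chi^{(2)})$ distributionally against $d(m^{(1)}-m^{(2)})$. A careful Kruzhkov-type doubling argument then uses the sign property $\partial_\xi\chi(u,\xi)=\delta_0(\xi)-\delta_u(\xi)$, the nonnegativity of the $m^{(i)}$, and the assumption $b(x,0)=0$ (which excludes spurious contributions at $\xi=0$) to show that this limit is nonpositive, reducing the right-hand side to the expected Kruzhkov dissipation concentrated on the graphs $\xi=u^{(i)}$. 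The delicate technical point is the justification of these distributional pairings along the rough characteristic flow; this is precisely where the continuity of the characteristics with respect to the rough path topology (Appendix~\ref{app:RP}) and the technical lemma announced in the Appendix enter crucially. Assembling these steps yields $\|u^{(1)}(t)-u^{(2)}(t)\|_{L^1}\le\|u_0^{(1)}-u_0^{(2)}\|_{L^1}$ for every $t\in(0,T]\setminus\mcN$ and hence for a.e.\ $t\in[0,T]$.
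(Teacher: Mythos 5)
Your overall framework (kinetic doubling with the characteristic test functions of Proposition~\ref{rmk:rough_kinetic}) matches the paper, and the identities you write down are correct as far as they go, but there are two genuine gaps that prevent your argument from closing.

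\textbf{The sign analysis fails for the squared regularization.} You regularize $\int(\chi^{(1)}-\chi^{(2)})^2$ by the quadratic $\int(g^{(1)}_\epsilon-g^{(2)}_\epsilon)^2$ and differentiate via the Stieltjes product rule, which pairs $g^{(1)}_\epsilon-g^{(2)}_\epsilon$ against $d(m^{(1)}-m^{(2)})$. After the chain rule and integration by parts this produces, besides the Kruzhkov cross-pairings $-2\int\delta_{u^{(1)}}m^{(2)}-2\int\delta_{u^{(2)}}m^{(1)}\le 0$, the diagonal pairings $+2\int\delta_{u^{(1)}}m^{(1)}+2\int\delta_{u^{(2)}}m^{(2)}\ge 0$. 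These diagonal terms have the wrong sign and are generically nonzero (the defect measure $m^{(i)}$ concentrates precisely near the graph $\xi=u^{(i)}$; this is the entropy dissipation at shocks). The statement that a ``careful Kruzhkov-type doubling argument'' yields nonpositivity is therefore not available in the form you propose. The paper avoids this by never differentiating a square: it expands $\int(\chi^{(1)}-\chi^{(2)})^2=\int|\chi^{(1)}|+\int|\chi^{(2)}|-2\int\chi^{(1)}\chi^{(2)}$ and regularizes the $\int|\chi^{(i)}|$ contributions \emph{linearly} as $\int\sgn^\ve(\eta)(\chi^{(i)}\ast\vr_{t_0,\ve})$ in the functional $F_{t_0,\ve}$. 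Lemma~\ref{lem:abs_val_part} is exactly the identity that makes the evolution of this linear quantity produce the good $\int\vr(x',0)\vr(x)\,m^{(i)}$ term, which is then matched against the $\delta_0$ contributions from the cross term. Your route reintroduces the bad diagonal terms that this choice of functional is designed to eliminate.

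\textbf{The error terms do not vanish as $\epsilon\to0$, so fixing $t_0=t$ is fatal.} Your plan is to fix $t$, take $t_0=t$ throughout $[0,t]$, and send $\epsilon\to 0$. But the Jacobian mismatch between $\partial_\xi X_{(r,x,\xi)}(r-t_0)$ and $\partial_{\xi'}X_{(r,x',\xi')}(r-t_0)$ that obstructs the integration-by-parts cancellation is of size $C(r-t_0)^\alpha$ \emph{uniformly in $\epsilon$} (this is precisely the content of Lemma~\ref{lem:err_est}: the $\epsilon^{-1}$ from the mollifier derivative is cancelled by the fact that the characteristics through nearby points are $O(\ve)$-close, leaving a residual $(r-t_0)^\alpha$). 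Thus with $t_0=t$ fixed, the accumulated error over $[0,t]$ is $O(t^\alpha\int_0^t\!\int (m^{(1)}+m^{(2)}))$, which does not go to zero as $\epsilon\to 0$. The paper's proof resolves this with a partition argument: it telescopes $F(t)-F(s)=\sum_i [F(t_{i+1})-F(t_i)]$, on each subinterval uses the freedom to re-base the test functions at $t_0=t_i$, collects a total error $\le C\|\Delta\|^\alpha\int_s^t\!\int(m^{(1)}+m^{(2)})$, and then lets $\|\Delta\|\to 0$. Without this re-basing over a refining partition, the residual rough-path error persists and the contraction is not obtained.
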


\no Before we present the details, we describe briefly the main idea of the proof  and motivate some of the technicalities. The key observation, that goes back to \cite{P02},  is that 
\begin{align*}
 \|u^{(1)}(t)-u^{(2)}(t)\|_{L^1(\R^d)}&=\int (\chi^{(1)}(x,\xi,t) -\chi^{(2)}(x,\xi,t))^2dx d\xi\\ 
&= \int |\chi^{(1)}(x,\xi,t)| + |\chi^{(2)}(x,\xi,t)|  -2\chi^{(1)}(x,\xi,t) \chi^{(2)}(x,\xi,t) dx d\xi,
\end{align*}
where, for $i=1,2$, $\chi^{(i)}$ is related to $u^{(i)}$ by \eqref{eq:char_fctn}. 
\no To obtain the contraction estimate \eqref{eq:gen_kinetic_unique}, it then suffices, in principle, to show that  the derivative with respect to $t$ of the right hand side of the above equality is nonpositive; actually  this is how uniqueness is  shown for conservation laws with smooth time dependence. The difficulty here is that it is not possible to perform this differentiation due to the lack of regularity. Following instead \cite{LPS13, LPS14} we replace $\chi^{(1)}, \chi^{(2)}$ by 
$\vr_{t_0} \ast \chi^{(1)}$,  $\vr_{t_0} \ast  \chi^{(2)}$ for suitable choices of $\vr_{t_0}$. Then it is possible to differentiate with respect to $t$ at the expense of creating  several additional terms that need to be estimated. 
\smallskip

\begin{proof}[Proof of Theorem \ref{thm:unique}]
For a fixed $t_{0} \in [0,T]$ we have 
\begin{align*}
 & -2\frac{d}{dt}\int\left(\chi^{(1)}\ast\vr_{t_{0}}\right)\left(\chi^{(2)}\ast\vr_{t_{0}}\right)dyd\eta\\
 & =-2\int\left(\chi^{(1)}\ast\vr_{t_{0}}\right)\frac{d}{dt}\left(\chi^{(2)}\ast\vr_{t_{0}}\right)+\left(\chi^{(2)}\ast\vr_{t_{0}}\right)\frac{d}{dt}\left(\chi^{(1)}\ast\vr_{t_{0}}\right)dyd\eta\\
 & =-2\int\int \chi^{(1)}(x',\xi',t)\vr_{t_{0}}(x',y,\xi',\eta,t)\vr_{t_{0}}(x,y,\xi,\eta,t)\partial_{\xi}m^{(2)}(x,\xi,t)dxd\xi dx'd\xi'dyd\eta\\
 & -2\int\int m^{(1)}(x',\xi',t)\vr_{t_{0}}(x',y,\xi',\eta,t)\vr_{t_{0}}(x,y,\xi,\eta,t)\partial_{\xi}m^{(1)}(x,\xi,t)dxd\xi dx'd\xi'dyd\eta\\
 & =-2\int\int \chi^{(1)}(x',\xi',t)\partial_{\xi'}\vr_{t_{0}}(x',y,\xi',\eta,t)\vr_{t_{0}}(x,y,\xi,\eta,t)m^{(2)}(x,\xi,t)dxd\xi dx'd\xi'dyd\eta\\
 & -2\int\int \chi^{(2)}(x',\xi',t)\partial_{\xi'}\vr_{t_{0}}(x',y,\xi',\eta,t)\vr_{t_{0}}(x,y,\xi,\eta,t)m^{(1)}(x,\xi,t)dxd\xi dx'd\xi'dyd\eta\\
 & +Err^{(1,2)}(t_{0},t)\\
 & =2\int\int\left(\d(\xi')-\d(\xi'-u^{(1)}(x',t))\right)\vr_{t_{0}}(x',y,\xi',\eta,t)\vr_{t_{0}}(x,y,\xi,\eta,t)m^{(2)}(x,\xi,t)dxd\xi dx'd\xi'dyd\eta\\
 & +2\int\int\left(\d(\xi')-\d(\xi'-u^{(2)}(x',t))\right)\vr_{t_{0}}(x',y,\xi',\eta,t)\vr_{t_{0}}(x,y,\xi,\eta,t)m^{(1)}(x,\xi,t)dxd\xi dx'd\xi'dyd\eta\\
 & +Err^{(1,2)}(t_{0},t)\\
 & \le2\int\int\vr_{t_{0}}(x',y,0,\eta,t)\vr_{t_{0}}(x,y,\xi,\eta,t)(m^{(2)}+m^{(1)})(x,\xi,t)dxd\xi dx'dyd\eta\\
 & +Err^{(1,2)}(t_{0},t),
\end{align*}
where, the inequality is due to the nonnegativity of the Dirac masses $\delta$ and  
\begin{equation}\label{eq:err-1-1}
\begin{cases}
Err^{(1,2)}(t_{0},t):=  2\int\left(\chi^{(1)}(x',\xi',t)m^{(2)}(x,\xi,t)+\chi^{(2)}(x',\xi',t)m^{(1)}(x,\xi,t)\right)\\[1.5mm]
 (\vr_{t_{0}}(x',y,\xi',\eta,t)\partial_{\xi}\vr_{t_{0}}(x,y,\xi,\eta,t)
  +\partial_{\xi'}\vr_{t_{0}}(x',y,\xi',\eta,t)\vr_{t_{0}}(x,y,\xi,\eta,t))dyd\eta dx'd\xi'dxd\xi.
\end{cases}
\end{equation}
Lemma \ref{lem:abs_val_part} below applied to $f(x,\xi,t):=\chi^{(i)}(x,\xi,t)$ and $m:=m^{(i)}$ yields a null $\mcN\subseteq[0,T]$ with $0\not\in\mcN$ such that, for every $t_{0}\ge0$, all  $(s,t)\in \D\left([0,T]\setminus\mcN\right)$ and $i=1,2$, 
\begin{align*}
 & 2\int_{s}^{t}\int\int\vr_{t_{0}}(x',y,0,\eta,r)\vr_{t_{0}}(x,y,\xi,\eta,r)m^{(i)}(x,\xi,r)dxd\xi dx'dyd\eta dr\\
 & =-\int(\sgn\ast\vr^{0})(y,\eta)[(\chi^{(i)}\ast\vr_{t_{0}})(y,\eta,t)- (\chi^{(i)}\ast\vr_{t_{0}})(y,\eta,s)]dyd\eta +\int_{s}^{t}Err^{(i)}(t_{0},r)dr,
\end{align*}
\no with 
\begin{align*}
Err^{(i)}(t_{0},t)=\int m^{(i)}(x,\xi,t)[ & \int (\partial_{\xi'}\vr_{t_{0}}(x',y,\xi',\eta,t)\vr_{t_{0}}(x,y,\xi,\eta,t) \\
& +\vr_{t_{0}}(x',y,\xi',\eta,t)\partial_{\xi}\vr_{t_{0}}(x,y,\xi,\eta,t))dyd\eta dx'd\xi']dxd\xi.
\end{align*}
It follows that, for all $t_{0}\ge0$ and all $(s,t)\in \D\left([0,T]\setminus\mcN\right)$,
\begin{align}
&-2\int\left[(\chi^{(1)}\ast\vr_{t_{0}})(y,\eta,t)(\chi^{(2)}\ast\vr_{t_{0}})(y,\eta,t) - 
(\chi^{(1)}\ast\vr_{t_{0}})(y,\eta,s)(\chi^{(2)}\ast\vr_{t_{0}})(y,\eta,s)\right]dyd\eta  \nonumber \\ 
\le & -\int(\sgn\ast\vr^{0})(y,\eta)[(\chi^{(1)}\ast\vr_{t_{0}})(y,\eta,t) - (\chi^{(1)}\ast\vr_{t_{0}})(y,\eta,s)]dyd\eta \nonumber \\ 
& -\int(\sgn\ast\vr^{0})(y,\eta)[(\chi^{(2)}\ast\vr_{t_{0}})(y,\eta,t) - (\chi^{(2)}\ast\vr_{t_{0}})(y,\eta,s)]dyd\eta\label{eq:unique_1}\\   
& +\int_{s}^{t}Err^{(1)}(t_0, r)+Err^{(2)}(t_0,r)+Err^{{1,2}}(t_0, r)dr. \nonumber 
\end{align}

\no For $t\in[0,T]$ we define
\begin{align*}
F(t) & :=\int(\chi^{(1)}(y,\eta,t)-\chi^{(2)}(y,\eta,t))^{2}dyd\eta.
\end{align*}
\no Let $\vr_{\ve}^{s},\vr_{\ve}^{v}$ be standard smooth approximations of Dirac masses (here we use the superscripts $s$ and $v$ to signify that they are approximations to Dirac masses in space and velocity respectively)  and, for each $t_{0}\ge0$, let $\vr_{t_{0},\ve}(x,y,\xi,\eta,t)$ be as in \eqref{eq:transport_stable} with initial condition $\vr_{\ve}^{0}(x,y,\xi,\eta)=\vr_{\ve}^{s}(x-y)\vr_{\ve}^{v}(\xi-\eta)$, that is, for all $t\in[0,T],$ 
\begin{equation}\label{eq:eps_test_fct}
\vr_{t_{0},\ve}(x,y,\xi,\eta,t)  =\vr_{\ve}^{0}\left(\begin{array}{cc}
X_{(t,x,\xi)}(t-t_{0})-y\\
\Xi_{(t,x,\xi)}(t-t_{0})-\eta
\end{array}\right)
  =\vr_{\ve}^{s}(X_{(t,x,\xi)}(t-t_{0})-y)\vr_{\ve}^{v}(\Xi_{(t,x,\xi)}(t-t_{0})-\eta). 
\end{equation}
We then define
\begin{align*}
F_{t_{0},\ve}(t):= & -2\int\left(\chi^{(1)}\ast\vr_{t_{0},\ve}\right)(y,\eta,t)\left(\chi^{(2)}\ast\vr_{t_{0},\ve}\right)(y,\eta,t)dyd\eta\\
 & +\int\sgn^{\ve}(\eta)(\chi^{(1)}\ast\vr_{t_{0},\ve})(y,\eta,t)dyd\eta +\int\sgn^{\ve}(\eta)(\chi^{(2)}\ast\vr_{t_{0},\ve})(y,\eta,t)dyd\eta.
\end{align*}
Lemma \ref{lem:convergence} below, applied to $f(x,\xi,t):=\chi^{(i)}(x,\xi,t)$, implies that, as $\ve\to 0$ and for all $t_{0}\ge0$ and all $t\in[t_{0},T]\setminus\mcN$,
\begin{align*}
F_{t_{0},\ve}(t)\to & -2\int \chi^{(1)}(y,\eta,t)\chi^{(2)}(y,\eta,t)dyd\eta +\int|\chi^{(1)}|(y,\eta,t)dyd\eta+\int|\chi^{(2)}|(y,\eta,t)dyd\eta,
\end{align*}
and thus
\begin{equation}
F_{t_{0},\ve}(t)\to F(t).\label{eq:F-conv}
\end{equation}
\no In view  \eqref{eq:unique_1}, for all $t_{0}\ge0$ and  $(s,t)\in \D\left([0,T]\setminus\mcN\right)$, we find  
\begin{align}
 & F_{t_{0},\ve}(t)-F_{t_{0},\ve}(s)\le\int_{s}^{t}(Err^{(1)}(t_{0},r,\ve)+Err^{(2)}(t_{0},r,\ve)+Err^{(1,2)}(t_{0},r,\ve))dr,\label{eq:first_est}
\end{align}
where
\begin{align*}
& Err^{(1,2)}(t_{0},r,\ve):= 2\int [(\chi^{(1)}(x',\xi',t)m^{(2)}(x,\xi,t)+\chi^{(2)}(x',\xi',t)m^{(1)}(x,\xi,t))\\[1mm]
 & (\vr_{t_{0},\ve}(x',y,\xi',\eta,t)\partial_{\xi}\vr_{t_{0},\ve}(x,y,\xi,\eta,t)
 +\partial_{\xi'}\vr_{t_{0},\ve}(x',y,\xi',\eta,t)\vr_{t_{0},\ve}(x,y,\xi,\eta,t))]dyd\eta dx'd\xi'dxd\xi,
\end{align*}

\no and, for $i=1,2$,
\begin{align*}
Err^{(i)}(t_{0},r,\ve):=\int m^{(i)}(x,\xi,t)( &\int  (\partial_{\xi'}\vr_{t_{0},\ve}(x',y,\xi',\eta,t)\vr_{t_{0},\ve}(x,y,\xi,\eta,t) \\
& +\vr_{t_{0},\ve}(x',y,\xi',\eta,t)\partial_{\xi}\vr_{t_{0},\ve}(x,y,\xi,\eta,t))dyd\eta dx'd\xi')dxd\xi.
\end{align*}


\no Lemma \ref{lem:err_est}  implies that, for all $(s,t)\in\D\left([t_{0},T]\right)$ and $i=1,2$, 
\begin{align*}
& \int_{s}^{t}Err^{(1,2)}(t_{0},r,\ve)dr\le  2\left(\int_{s}^{t}\int m^{(2)}(x,\xi,r)+m^{(1)}(x,\xi,r)dxd\xi dr\right)\\
 & \sup_{\begin{subarray}{c}
(x,\xi) \in \R^{N+1}\\
r\in[s,t]
\end{subarray}}\left(\int\left|\int(\vr_{t_{0},\ve}(x',y,\xi',\eta,r)\partial_{\xi}\vr_{t_{0},\ve}(x,y,\xi,\eta,r)
  +\partial_{\xi'}\vr_{t_{0},\ve}(x',y,\xi',\eta,r)\vr_{t_{0},\ve}(x,y,\xi,\eta,r))dyd\eta\right|dx'd\xi'\right)  \\
& \le C\left(\int_{s}^{t}\int m^{(2)}(x,\xi,r)+m^{(1)}(x,\xi,r)dxd\xi dr\right)(t-t_{0})^{\a},
\end{align*}
and, similarly
\begin{align*}
\int_{s}^{t}Err^{i}(t_{0},r,\ve)\le & C\left(\int_{s}^{t}\int m^{(i)}(x,\xi,r)dxd\xi dr\right)(t-t_{0})^{\a}.
\end{align*}

\no It follows that, for all $(s,t)\in \D\left([t_{0},T]\right),$
\begin{equation}\label{eq:err_est}
  \int_{s}^{t}Err^{1}(t_{0},r,\ve)+Err^{2}(t_{0},r,\ve)+Err^{1,2}(t_{0},r,\ve)dr
  \le C(t-t_{0})^{\a} \int_{s}^{t}\int (m^{(2)}(x,\xi,r)+m^{(1)}(x,\xi,r))dxd\xi dr.\nonumber 
\end{equation}

\no  Next recall that, in view of  \eqref{eq:F-conv}, \eqref{eq:first_est} and \eqref{eq:err_est},  there exists  a null $\mcN\subseteq[0,T]$ with $0\not\in\mcN$ such that for all $(s,t)\in \D\left([0,T]\setminus\mcN\right)$ and  every partition $\D=\{s=t_{0}\le\dots\le t_{M}=t\}\subseteq[0,T]\setminus\mcN,$  
\begin{align*}
F(t)-F(s) & =\sum_{i=1}^{N}F(t_{i+1})-F(t_{i})\\
 & =\lim_{\ve\to0}\sum_{i=1}^{N}F_{t_{i},\ve}(t_{i+1})-F_{t_{i},\ve}(t_{i})\\
 & \le\lim_{\ve\to0}\sum_{i=1}^{N}\int_{t_{i}}^{t_{i+1}}[Err^{(1)}(t_{i},r,\ve)+Err^{(2)}(t_{i},r,\ve)+Err^{(1,2)}(t_{i},r,\ve)]dr\\
 & \le C\sum_{i=1}^{N}(t_{i+1}-t_{i})^{\a}\left(\int_{t_{i}}^{t_{i+1}}\int [m^{(2)}(x,\xi,r)+m^{(1)}(x,\xi,r)]dxd\xi dr\right)\\
 & \le C\|\D\|^{\a}\left(\int_{s}^{t}\int [m^{(2)}(x,\xi,r)+m^{(1)}(x,\xi,r)]dxd\xi dr\right).
\end{align*}

\no Letting $\|\D\|\to0$ yields that, for all $(s,t)\in \D\left([0,T]\setminus\mcN\right)$,
$F(t)\le F(s)$ and, since, 
$0\not\in\mcN$,  the claim follows.\end{proof}

\section{The existence of pathwise rough entropy/kinetic solutions}\label{sec:existence}
 
\no We establish here the existence of pathwise rough entropy/kinetic solutions, which, in view of Theorem~\ref{thm:unique}, are unique.

\begin{thm}\label{thm:existence}
Let  $u_{0}\in(L^{1}\cap L^{2})(\R^{N})$. Then there exists a pathwise rough entropy/kinetic solution $u\in  L^{\infty}([0,T];L^{1}(\R^{N}))$ to \eqref{eq:scl}.\end{thm}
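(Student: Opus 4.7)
The plan is to construct a pathwise rough entropy/kinetic solution by smooth approximation, weak compactness at the level of the kinetic formulation, and a concluding rigidity argument identifying the limit as a genuine kinetic function. First, I would approximate the driving rough path $z$ by a sequence $z^n \in C^\infty([0,T];\R^M)$ so that the canonical lifts converge to $z$ in the $\alpha$-H\"older rough path metric; this is always possible by the very definition of geometric rough paths. For each smooth driver $z^n$, the classical theory of kinetic solutions (see \cite{D06,P02}) produces a kinetic solution $u^n \in L^\infty([0,T];L^1(\R^N))$ of \eqref{eq:scl} together with a nonnegative bounded defect measure $m^n$ on $\R^N\times\R\times[0,T]$. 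By Lemma~\ref{lem:kinetic-pathwise}, each $u^n$ is also a pathwise rough entropy/kinetic solution for the smooth lift of $z^n$, so the identity of Proposition~\ref{rmk:rough_kinetic} holds along the characteristics of $z^n$.

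Next, I would establish a priori bounds uniform in $n$. Standard $L^1$ and $L^2$ contraction for scalar conservation laws with smooth coefficients, applied to the classical kinetic formulation, yields
\[
\|u^n(t)\|_{L^1(\R^N)} \le \|u_0\|_{L^1(\R^N)}, \qquad \|u^n(t)\|_{L^2(\R^N)} \le \|u_0\|_{L^2(\R^N)},
\]
and the entropy dissipation identity gives a uniform bound on the total mass of $m^n$. Crucially, these estimates do not see the pointwise values of $\dot z^n$, since they rely only on the divergence-form structure of the equation and on assumption \eqref{eq:b_ass}. By Banach--Alaoglu, I pass to a subsequence along which $\chi^n := \chi(u^n,\xi) \rightharpoonup^* f$ in $L^\infty$ and $m^n \rightharpoonup^* m$ as bounded nonnegative measures; in particular $f$ inherits the sign and support properties $|f|\le 1$ and $\xi f \ge 0$, so that $f$ is a generalized kinetic function in the sense of Perthame.

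The key step is then to pass to the limit in the identity of Proposition~\ref{rmk:rough_kinetic} written for $(u^n,m^n)$ and the characteristics of $z^n$. Here rough path theory enters: by \cite{FV10} and the uniform $\Lip^{\gamma+2}$ bound in \eqref{eq:regularity_assumptions}, the rough flows $(X^n,\Xi^n)$ associated to $z^n$ converge, together with their Jacobians in $(x,\xi)$, locally uniformly on $[0,T]$ to the rough flow $(X,\Xi)$ associated to $z$. Consequently, for any $\vr^0\in C_c^\infty(\R^{N+1})$ and $t_0\in[0,T]$, the test functions $\vr^n_{t_0}$ defined by \eqref{eq:transport_stable} with driver $z^n$ converge in $C^1(\R^{N+1})$ with uniform compact support to $\vr_{t_0}$. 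Combining this strong convergence of the test functions with the weak-$*$ convergence of $\chi^n$ and $m^n$ then lets me pass to the limit on both sides of \eqref{eq:gen_kinetic_integrated-1}, obtaining
\[
f\ast\vr_{t_0}(y,\eta,t) - f\ast\vr_{t_0}(y,\eta,s) = -\int_s^t\!\!\int \partial_\xi \vr_{t_0}(x,y,\xi,\eta,r)\, m(x,\xi,r)\, dx\, d\xi\, dr
\]
for all $t_0\ge 0$ and $(s,t)\in\D([0,T]\setminus\mcN)$; that is, the pair $(f,m)$ is a generalized pathwise rough entropy/kinetic solution.

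The final and hardest step is the rigidity: one must show that $f$ has the form $f = \chi(u(x,t),\xi)$ for some $u \in L^\infty([0,T];L^1(\R^N))$. Following the strategy outlined in the introduction and in \cite{P02}, this is accomplished by applying the uniqueness argument of Theorem~\ref{thm:unique} to generalized solutions paired with themselves: testing $f$ against itself via $\vr_{t_0,\ve}\ast f$ and exploiting the defect-measure estimates mentioned in the introduction forces the Young measure associated with $f$ to be a Dirac mass in $\xi$, that is $f = \chi(u,\xi)$. The main obstacle is precisely here: unlike in the spatially independent case \cite{LPS13}, no BV bound is available to upgrade weak to strong compactness, so one must instead carefully control the defect measure near the diagonal $\xi=\eta$, using the quantitative $(t-t_0)^\alpha$ smallness of the error terms produced by $\vr_{t_0,\ve}$ (as in the proof of Theorem~\ref{thm:unique}) together with the new deterministic estimates on $m$. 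Once this rigidity is obtained, the function $u$ is an element of $L^\infty([0,T];L^1(\R^N))$ with initial datum $u_0$, and it is a pathwise rough entropy/kinetic solution in the sense of Definition~\ref{def:path_e-soln}.
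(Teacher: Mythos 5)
Your overall plan — regularize $z$ by smooth $z^n$, take weak-$*$ limits at the kinetic level to get a generalized solution, then prove rigidity $f=\chi(u,\xi)$ — matches the paper. But there is a genuine gap at the crucial step where you assert that ``standard $L^1$ and $L^2$ contraction \dots and the entropy dissipation identity gives a uniform bound on the total mass of $m^n$'' and that ``these estimates do not see the pointwise values of $\dot z^n$.'' That claim is false for the $x$-dependent fluxes considered here. Multiplying the kinetic equation by $\xi$ and integrating, the classical identity produces (cf.\ \eqref{takis2})
\[
\int_0^T\!\!\int_{\R^{N+1}} m^n(x,\xi,t)\,dx\,d\xi\,dt \le \tfrac{1}{2}\|u_0\|_2^2 + \int_0^T\!\!\int_{\R^{N+1}} |b^n(x,\xi,t)|\,dx\,d\xi\,dt,
\]
and the right-hand side scales like the total variation of $z^n$, hence blows up as $n\to\infty$. (The same source term spoils the alleged $L^2$ contraction: one does \emph{not} have $\|u^n(t)\|_2\le\|u_0\|_2$; even the paper's stable estimate only gives $\|u^n(t)\|_2^2 \le \|u_0\|_2^2 + 2M\|u_0\|_1$.) Precisely because of this, the paper's proof cannot use the off-the-shelf bound; the central new ingredient is Lemma~\ref{lem:l2-bound}, which is obtained by testing with $\vr_{t_0}(\cdot)=\vp(\Xi_{(t,x,\xi)}(t-t_0))$ along the rough characteristics, exploiting $\sgn(\Xi)=\sgn(\xi)$ and the H\"older-in-time estimate $|\partial_\xi\Xi_{(r,x,\xi)}(t)-1|\le C|t|^\a$ from Lemma~\ref{lem:hoelder_rp}, and iterating over a partition of $[0,t]$ whose mesh is uniform for $\|z\|_{\a\text{-H\"ol}}\le R$; the $L^1$ bound of Lemma~\ref{lem:l1-bound} is obtained by the same device rather than by naïve $L^1$ contraction. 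You do gesture at ``new deterministic estimates on $m$'' in your final paragraph, but you locate the difficulty entirely in the rigidity step; in fact without the pathwise mass bound you cannot even extract the weak-$*$ convergent subsequence $m^n\rightharpoonup m$ needed to form a generalized solution in the first place. Once that lemma is in place, the remainder of your argument — convergence of the flows and test functions via Lemma~\ref{lem:hoelder_rp}, passage to the limit in the weak formulation, and the $\int(f^2-|f|)\ge 0$ rigidity argument using $\vr_{t_0,\ve}$ and the $(t-t_0)^\a$ error bound of Lemma~\ref{lem:err_est} — is in line with Theorem~\ref{thm:generalized_existence} and Proposition~\ref{prop:reconstruction}.
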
 

\no Since the proof is long we first outline the main steps. The solution is found as a weak limit of solutions of problems like \eqref{eq:scl} with regularized driving signals. Due to the lack of BV-type estimates to obtain the necessary compactness we follow the program of \cite{P02} and work with what are known  as generalized kinetic solutions. 
We then show that these solutions converge weakly to a limit that we call a generalized pathwise rough entropy/kinetic solution.  The final step is to show that the latter is indeed a  pathwise rough entropy/kinetic solution to \eqref{eq:scl}. Each of the steps described above are discussed in a separate subsection.

\smallskip

\subsection*{The regularized equation and some difficulties}  Let $z^n$ be a regular approximation of  the rough driving signal $z$ (such $z^n$ exist since $z$ is assumed to the a geometric rough path -- see Appendix \ref{app:RP}) and consider the approximating kinetic solution $\chi^n$ to  
\begin{equation}\label{eq:kinetic_form-1-4-1}
\begin{cases}
\partial_t 
\chi^{n} + a^{n}(x,\xi,t)\cdot D_{x}\chi^{n}-b^{n}(x,\xi,t)\partial_{\xi}\chi^{n} =\partial _\xi m^{n} \ \text{ in } \ \R^{N}\times\R\times(0,T),\\[1mm]
\chi^{n}  =\chi(u_{0}(\cdot),\cdot) \ \text{ on } \ \R^{N}\times\R\times\{0\},
\end{cases}
\end{equation}
where
$$ a_{i}^{n}(x,\xi,t)  := \sum_{j=1}^M(\partial_{u}A^{i,j})(x,\xi)\dot{z}^{n,j}(t) \  \  \text{and} \  \
b^{n}(x,\xi,t)  :=\sum_{i=1}^N\sum_{j=1}^M(\partial_{x_{i}}A^{i,j})(x,\xi)\dot{z}^{n,j}(t).$$
Although not written down the existence of a unique solution $\chi^{n}$  to \eqref{eq:kinetic_form-1-4-1} follows from straightforward modifications of standard methods (see, for example, \cite{D06,P02}), since $\dot{z}^{n}$ is continuous (actually smooth). In order to pass to the limit $n\to\infty$ we need to establish some uniform in $n$  estimates on $\chi^{n}$ and $m^{n}$ that do not blow up as $n\to\infty$. Unfortunately the known estimates do not have this property. For example, the bound on the total mass of the kinetic measure that can be obtained following \cite{D06,P02}  is 
\begin{equation}\label{takis2}
\int_0^T \int_{\R^{N+1}} m^n(x,\xi,t) dx d\xi dt \leq \frac{1}{2}\|u^0\|_2^2 + \int_0^T \int_{\R^{N+1}} |b^n(x,\xi,t)|dx d\xi dt,
\end{equation} 
and the right hand side blows up as $n\to \infty$ since it depends on the total variation of ${ z}^n$. 
Following \cite{LPS14} It is therefore necessary to find  new bounds on the total  mass of $m^n$ which is presented in Section \ref{sec:stable_apriori}. 
\smallskip

\no Once uniform estimates on $\chi^{n}$ and $m^{n}$ have been obtained, we find subsequences, which we denote below the same way as the sequences, which converge in the appropriate weak $\star$ limits in \eqref{eq:kinetic_form-1-4-1}, that is 
\begin{equation*}
  \chi^{n} \rightharpoonup f \  \text{in} \ \ L^\infty \  \text{weak} \star \    \text{and} \    m^{n}  \rightharpoonup m \   \text{in} \  \mcM \ \text{weak} \star.
  \end{equation*}
\no Although a priori it is unclear whether $f$ is again a  characteristic-type function, that is  whether $f(x,\xi,t)=\chi(u(t,x),\xi)$ for some $u$,  we may pass to the limit in \eqref{eq:kinetic_form-1-4-1} to obtain
\begin{equation*}
\begin{cases}
\partial_t 
f + a(x,\xi,t)\cdot D_{x}f-b(x,\xi,t)\partial_{\xi}f =\partial _\xi m \ \text{ in } \ \R^{N}\times\R\times(0,T),\\[1.5mm]
f  =\chi(u_{0}(\cdot),\cdot) \ \text{ on } \ \R^{N}\times\R\times\{0\}.
\end{cases}
\end{equation*}
This observation motivates the concept of generalized pathwise rough entropy/kinetic solutions introduced next. 

\subsection*{Generalized pathwise rough entropy/kinetic 
solutions and some basic properties}\label{sec:generalized_basic}

We begin with the following definition. 

\begin{defn}
\label{def:path_e-soln-1}Assume that  $u_{0}\in(L^{1}\cap L^{2})(\R^{N})$. Then $f\in L^{\infty}([0,T];L^{1}(\R^{N}\times\R))\cap L^{\infty}(\R^{N}\times\R\times[0,T])$ is a generalized pathwise rough entropy/kinetic solution to \eqref{eq:scl} if there exists a nonnegative measure $\nu$ and a nonnegative, bounded measure $m$ on $\R^{N}\times\R\times[0,T]$ such that 
\begin{equation}\label{eq:gen_kinetic_measure}
f(x,\xi,0)=\chi(u_{0}(x),\xi),  \  |f|(x,\xi,t)=\sgn(\xi)f(x,\xi,t)\le1  \ \text{ and } \  \frac{\partial f}{\partial\xi} =\d(\xi)-\nu(x,\xi,t),
\end{equation}
and, for all $t_{0}\ge0,$ all test functions $\vr_{t_{0}}$ given by \eqref{eq:transport_stable} with $\vr^{0}\in C^{\infty}_c (\R^{N+1})$ and all $\vp\in C_{c}^{\infty} ([0,T]),$ 
$$\int_{0}^{T}\partial_{t}\vp(r)(\vr_{t_{0}}\ast f)(y,\eta,r)dr+\vp(0)(\vr_{t_{0}}\ast f)(y,\eta,0)=\int_{0}^{T}\int\vp(r)\partial_{\xi}\vr_{t_{0}}(x,y,\xi,\eta,r)m(x,\xi,t)dxd\xi dr.$$
\end{defn}
\no It is, of course, immediate that a pathwise rough entropy/kinetic solution is also a generalized one.  Moreover, a  claim similar to Proposition~\ref{rmk:rough_kinetic} is true here too. Indeed  $f$ is a generalized pathwise rough entropy/kinetic  solution to \eqref{eq:scl} if and only if  there is a null $\mcN\subseteq[0,T]$ such that $0\not\in\mcN$ and, for all $t_{0}\ge0$, all $(s,t)\in \D\left([0,T]\setminus\mcN\right)$
 and all test functions $\vr_{t_{0}}$ given by \eqref{eq:transport_stable} with $\vr^{0}\in C^{\infty}_c(\R^{N+1})$, 
\begin{equation}
\vr_{t_{0}}\ast f(y,\eta, t) -\vr_{t_{0}}\ast f(y,\eta,s)=  -\int_{s}^{t}\int\partial_{\xi}\vr_{t_{0}}(x,y,\xi,\eta,r)m(x,\xi,r)dxd\xi dr.
\end{equation}\label{eq:gen_kinetic_integrated}

\no We show first 
that the solutions given by Definition~\ref{def:path_e-soln-1} 
are $L^\infty(\R^N\times \R) \ \text{weak} \ \star$  continuous at $t=0$. 
\begin{prop}
Let $f$ be a generalized pathwise rough entropy/kinetic solution 
to \eqref{eq:scl}. Then there exists  a null $\mcN\subseteq[0,T]$ such that, if $t_{n}\in[0,T]\setminus\mcN$ and $t_{n}\to0$ as $n \to \infty,$
\[
f(x,\xi,t_{n})\rightharpoonup \chi(\xi,u_{0}(x)) \ \text{in}  \ L^{\infty}(\R^{N}\times\R) \ \text{weak} \star.
\]
\end{prop}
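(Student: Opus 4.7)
The plan is to use the almost-pointwise integrated identity satisfied by every generalized pathwise rough entropy/kinetic solution---the analogue of Proposition~\ref{rmk:rough_kinetic} stated immediately after Definition~\ref{def:path_e-soln-1}---specialized to $t_0=s=0$ and $t=t_n$. Let $\mcN\subseteq[0,T]$ be the null set furnished by that formulation; by hypothesis $0\notin\mcN$. Since $\vr_0(x,y,\xi,\eta,0)=\vr^0(x-y,\xi-\eta)$ and $f(\cdot,\cdot,0)=\chi(u_0(\cdot),\cdot)$ by \eqref{eq:gen_kinetic_measure}, the identity reads, for every $\vr^0\in C_c^\infty(\R^{N+1})$ and every $t_n\in[0,T]\setminus\mcN$,
\begin{equation*}
\vr_0\ast f(y,\eta,t_n)-(\vr^0*\chi(u_0,\cdot))(y,\eta) = -\int_0^{t_n}\!\!\int\partial_\xi\vr_0(x,y,\xi,\eta,r)m(x,\xi,r)dxd\xi dr,
\end{equation*}
where $*$ denotes the standard convolution on $\R^{N+1}$. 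Since $m$ is a finite nonnegative measure and $\partial_\xi\vr_0$ is uniformly bounded---the characteristic flow $\Phi_t(x,\xi):=(Y_{(t,x,\xi)}(0),\z_{(t,x,\xi)}(0))$ is $C^1$ with bounded derivatives by \eqref{eq:regularity_assumptions} and \cite{FV10}---the right-hand side vanishes as $t_n\to 0$, yielding the pointwise convergence $\vr_0\ast f(y,\eta,t_n)\to (\vr^0*\chi(u_0,\cdot))(y,\eta)$.

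The next step is to test this pointwise statement against an arbitrary $\psi\in C_c^\infty(\R^{N+1})$ and rewrite in terms of $f$ alone. Dominated convergence in $(y,\eta)$ (the integrand is bounded by $\|\vr^0\|_\infty\|f(\cdot,t_n)\|_{L^1}|\psi|$, uniformly in $n$) gives
\begin{equation*}
\int\psi(y,\eta)\,\vr_0\ast f(y,\eta,t_n)\,dyd\eta \longrightarrow \int\chi(u_0(x),\xi)(\vr^0*\psi)(x,\xi)dxd\xi,
\end{equation*}
after using Fubini on the right. Using the explicit expression \eqref{eq:transport_stable} for $\vr_0$ and Fubini on the left, the left-hand side equals $\int f(x,\xi,t_n)(\vr^0*\psi)(\Phi_{t_n}(x,\xi))dxd\xi$. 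By the continuity of the rough-path flow (Appendix~\ref{app:RP}), $\Phi_{t_n}\to\mathrm{Id}$ locally uniformly, so $(\vr^0*\psi)\circ\Phi_{t_n}\to\vr^0*\psi$ uniformly on $\R^{N+1}$ and the supports of these functions remain contained in a fixed compact set for $t_n$ small. Combined with $|f|\le 1$ this allows replacing $(\vr^0*\psi)\circ\Phi_{t_n}$ by $\vr^0*\psi$ in the limit, so
\begin{equation*}
\int f(x,\xi,t_n)(\vr^0*\psi)(x,\xi)dxd\xi \longrightarrow \int\chi(u_0(x),\xi)(\vr^0*\psi)(x,\xi)dxd\xi.
\end{equation*}

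Finally, a standard mollification removes the convolution. Taking $\vr^0=\vr^0_\ve$ a mollifier, $\vr^0_\ve*\psi\to\psi$ in $L^1(\R^{N+1})$ as $\ve\to 0$, and the uniform bounds $|f|,|\chi(u_0,\cdot)|\le 1$ give $|\int(f-\chi(u_0,\cdot))(\psi-\vr^0_\ve*\psi)dxd\xi|\le 2\|\psi-\vr^0_\ve*\psi\|_{L^1}$, uniformly in $n$. A diagonal argument (first fix $\ve$ small, then let $n\to\infty$) yields $\int f(\cdot,\cdot,t_n)\psi\to\int\chi(u_0,\cdot)\psi$ for every $\psi\in C_c^\infty(\R^{N+1})$; by density of $C_c^\infty$ in $L^1$ and the uniform bound $\|f(\cdot,\cdot,t_n)\|_\infty\le 1$, this extends to every $\psi\in L^1(\R^{N+1})$, which is the weak-$\star$ convergence claimed. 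The only substantive step is the removal of the characteristic distortion $\Phi_{t_n}\to\mathrm{Id}$, which is entirely controlled by the rough-flow continuity from \cite{FV10} and Appendix~\ref{app:RP}; the rest is routine mollification and density.
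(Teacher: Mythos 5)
Your proof is correct in outline but differs from the paper's, and it has one unstated assumption worth flagging. The paper argues by weak-$\star$ compactness: it extracts a subsequence along which $f(\cdot,\cdot,t_n)\rightharpoonup F$ and $\int_0^{t_n}m(\cdot,\cdot,r)dr\rightharpoonup M$, passes to the limit in the integrated identity to obtain $F=\chi(\xi,u_0(x))+\partial_\xi M$ in the sense of distributions, and then invokes the structural Lemma~2.2.3 of \cite{P02} (together with the constraints $\sgn(\xi)F=|F|\le 1$, $M\ge 0$ bounded) to force $M=0$ and $F=\chi(\xi,u_0)$. Crucially, the paper \emph{does not} need $\int_0^{t_n}\!\int m\to 0$: it allows $M\ne 0$ \emph{a priori} and then rules it out. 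Your route instead claims the error term vanishes directly and then recovers weak-$\star$ convergence of $f(\cdot,\cdot,t_n)$ itself by testing against $\psi$, pushing the characteristic distortion $\Phi_{t_n}\to\mathrm{Id}$ through the integral, and mollifying. This is a genuinely different path and it is valid, but it is longer; what you gain is that you avoid Lemma~2.2.3 of \cite{P02} entirely.

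The gap is in the sentence asserting that ``since $m$ is a finite nonnegative measure \ldots the right-hand side vanishes as $t_n\to 0$.'' Finiteness of $m$ alone does \emph{not} give $\int_0^{t_n}\!\int|\partial_\xi\vr_0|\,m\to 0$: a finite measure could carry an atom on the slice $\{t=0\}$, in which case the integral over $[0,t_n]$ would have a positive limit. What actually justifies the claim is the observation recorded after Proposition~\ref{rmk:rough_kinetic} (and valid verbatim for generalized solutions): Lebesgue differentiation with right-handed and centered averages shows that the $t$-marginal of $m$ has no point masses on $[0,T]\setminus\mcN$, and since $0\notin\mcN$, $m(\R^{N+1}\times\{0\})=0$. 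You should cite this explicitly, otherwise the step is unsupported. Once that is inserted, the remainder (Fubini, uniform convergence $(\vr^0*\psi)\circ\Phi_{t_n}\to\vr^0*\psi$ from the rough-flow estimates plus uniform compact supports, the $\vr^0_\ve$ mollification and $\varepsilon/3$ argument, density of $C_c^\infty$ in $L^1$ with $|f|\le 1$) is routine and correct.
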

\begin{proof}
The argument is similar to  the one in \cite[Proposition 4.1.7]{P02}. Since $f\in L^{\infty}(\R^{N}\times\R\times[0,T])$ and $m$ is a finite measure on $\R^{N}\times\R\times[0,T],$ there exists a null $\mcN\subseteq[0,T]$ with the property that, every sequence  $t_{n}\in[0,T]\setminus\mcN$ with $t_{n}\to0$ has a subsequence (again denoted by $t_{n}$) such that, as $n\to \infty,$
\begin{align*}
f(x,\xi,t_{n}) & \rightharpoonup F(x,\xi) \ \text{in }L^{\infty}(\R^{N}\times\R) \ \text{ weak} \ \star, \\
\int_{0}^{t_{n}}m(x,\xi,r)dr & \rightharpoonup M(x,\xi) \text{ in } \  \mcM(\R^{N}\times\R)   \ \text{ weak} \ \star,
\end{align*}
and
\begin{align*}
\sgn(\xi)F(x,\xi) & =|F|(x,\xi)\le1,\\
M(x,\xi)\ge0 & \ \text{ and } \  \int M(x,\xi)dxd\xi\le\int_{0}^{T}\int m(x,\xi,r)dxd\xi dr.
\end{align*}
Since $f$ is a generalized pathwise rough entropy/kinetic solution 
we have
\begin{align*}
(\vr_{0}\ast f)(y,\eta,t_{n}) - (\vr_{0}\ast f)(y,\eta,0) = & -\int_{0}^{t_{n}}\int\partial_{\xi}\vr_{0}(x,y,\xi,\eta,r)m(x,\xi,r)dxd\xi dr.
\end{align*}
Letting $t_{n}\to0$ yields
\begin{align*}
(\vr^{0}\ast F)(y,\eta)-(\vr^{0}\ast f^0)(y,\eta)= & -\int\partial_{\xi}\vr^{0}(x,y,\xi,\eta)M(x,\xi)dxd\xi,
\end{align*}
and, hence,
\begin{align*}
F(x,\xi)=\chi(\xi,u_{0}(x)) & +\partial_{\xi}M(x,\xi)
\end{align*}
in the sense of distributions. Then \cite[Lemma 2.2.3]{P02} yields  $M=0$ and $F(x,\xi,t)=\chi(\xi,u_{0}(x))$.

\end{proof}
\subsection*{Two important lemmata} We present here two technical lemmata about the behavior of certain expressions (integrals) involving $f$, $m$ and the special test functions we are using here. These results were already used in the proof of the contraction proof (uniquess) in the previous section and are also important for the existence. 
At a first passage the reader may want to skip them and go straight to the next subsection.

\begin{lem}
\label{lem:abs_val_part}Let $f$  be a generalized pathwise rough entropy/kinetic 
solution. There exists a null $\mcN\subseteq[0,T]$ with $0\not\in\mcN$ such that, for every $t_{0}\ge0$, all $\vr_{t_{0}}$ as in \eqref{eq:transport_stable} and all $(s,t)\in \D\left([0,T]\setminus\mcN\right)$, 
\begin{align*}
& 2\int_{s}^{t}\int\vr_{t_{0}}(x',y,0,\eta,r)\vr_{t_{0}}(x,y,\xi,\eta,r)m(x,\xi,r)dxd\xi dx'dyd\eta dr\\
 & =-\int(\sgn\ast\vr^{0})(y,\eta)(f\ast\vr_{t_{0}})(y,\eta,t)dyd\eta+ \int(\sgn\ast\vr^{0})(y,\eta)(f\ast\vr_{t_{0}})(y,\eta,s)+\int_{s}^{t}Err(t_{0},r)dr,
\end{align*}
where
\begin{align*}
Err(t_{0},t)=-\int m(x,\xi,t)\sgn(\xi') ( & \partial_{\xi'}\vr_{t_{0}}(x',y,\xi',\eta,t)\vr_{t_{0}}(x,y,\xi,\eta,t)\\
 & +\vr_{t_{0}}(x',y,\xi',\eta,t)\partial_{\xi}\vr_{t_{0}}(x,y,\xi,\eta,t))dyd\eta dx'd\xi'dxd\xi.
\end{align*}
\end{lem}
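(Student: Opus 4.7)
The plan is to combine the integrated form of the generalized kinetic equation satisfied by $f$ with a flow-invariance identity for the $\sgn$-weighted integral of $\vr_{t_{0}}$. The generalized analogue of Proposition~\ref{rmk:rough_kinetic} (stated in the paragraph following Definition~\ref{def:path_e-soln-1}) supplies a null set $\mcN_{1}\subseteq[0,T]$ with $0\notin\mcN_{1}$ such that for every $(s,t)\in\D([0,T]\setminus\mcN_{1})$,
\[
(\vr_{t_{0}}\ast f)(y,\eta,t)-(\vr_{t_{0}}\ast f)(y,\eta,s)=-\int_{s}^{t}\int\partial_{\xi}\vr_{t_{0}}(x,y,\xi,\eta,r)\,m(x,\xi,r)\,dxd\xi\,dr.
\]
Multiplying this identity by $(\sgn\ast\vr^{0})(y,\eta)$ and integrating in $(y,\eta)$ produces a master identity; the goal is to recognize its right-hand side as $\int_{s}^{t}[\mathrm{Err}(t_{0},r)-2\int\vr_{t_{0}}(x',y,0,\eta,r)\vr_{t_{0}}(x,y,\xi,\eta,r)m(x,\xi,r)\,dxd\xi dx'dyd\eta]dr$ and then rearrange.

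The crucial ingredient I would establish first is the flow-invariance
\[
(\sgn\ast\vr^{0})(y,\eta)=\int\sgn(\xi')\,\vr_{t_{0}}(x',y,\xi',\eta,r)\,dx'd\xi'\qquad\text{for every }r\ge 0,
\]
which follows from the characteristic representation \eqref{eq:transport_stable} and the change of variables $(x',\xi')\mapsto(Y_{(r,x',\xi')}(t_{0}),\zeta_{(r,x',\xi')}(t_{0}))$: (i) this rough flow is volume-preserving because for each driving component $j$ the vector field $(a^{\cdot,j},-b^{j})$ on $\R^{N+1}$ is divergence-free (indeed $\div_{x}a^{\cdot,j}=\sum_{i}\partial_{x_{i}}\partial_{\xi}A^{i,j}=\partial_{\xi}b^{j}$), and (ii) the flow preserves $\sgn(\xi)$ by \eqref{eq:sign_char}. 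Substituting this identity into the master identity converts the integrand $(\sgn\ast\vr^{0})(y,\eta)\partial_{\xi}\vr_{t_{0}}(x,y,\xi,\eta,r)m(x,\xi,r)$ into $\sgn(\xi')\vr_{t_{0}}(x',y,\xi',\eta,r)\partial_{\xi}\vr_{t_{0}}(x,y,\xi,\eta,r)m(x,\xi,r)$ under an additional $(x',\xi')$-integration. Adding and subtracting the symmetric companion $\sgn(\xi')\partial_{\xi'}\vr_{t_{0}}(x',\ldots)\vr_{t_{0}}(x,\ldots)m$ under the $m$-integral produces the combined term $-\mathrm{Err}(t_{0},r)$ and a compensating piece; integrating the compensating piece by parts in $\xi'$, using $\partial_{\xi'}\sgn=2\delta_{0}$ together with the compact $\xi'$-support of $\vr^{0}$, yields exactly $-2\int\vr_{t_{0}}(x',y,0,\eta,r)\vr_{t_{0}}(x,y,\xi,\eta,r)m(x,\xi,r)$. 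Rearranging gives the lemma.

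The main obstacle is making the flow-invariance identity rigorous in the rough setting: one needs both that $(Y_{t_{0}},\zeta_{t_{0}})$ is a $C^{1}$-diffeomorphism flow on $\R^{N+1}$ and that its Jacobian is identically one, so that the change of variables applies without a Jacobian factor. The former is provided by the rough-flow theory of \cite{FV10} under the regularity assumption \eqref{eq:regularity_assumptions}; the latter holds pathwise because each driving vector field is divergence-free, a property inherited by the rough flow via the linear rough differential equation governing its Jacobian (whose trace reduces to $d\log\det J=\sum_{j}(\div_{x}a^{\cdot,j}-\partial_{\xi}b^{j})\,dz^{j}=0$) and is stable under the geometric rough-path approximation $z^{n}\to z$. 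All integrations by parts in $\xi'$ fall on the smooth $\vr_{t_{0}}$-factors, so $m$ is handled throughout only as a measure and no derivatives fall on $m$; the null set $\mcN$ of the lemma can be taken as $\mcN_{1}$.
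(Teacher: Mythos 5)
Your proof is correct and follows the same route as the paper: both rest on (i) the integrated kinetic identity for $f$, (ii) the flow-invariance $\int\sgn(\xi')\vr_{t_{0}}(x',y,\xi',\eta,r)\,dx'd\xi'=(\sgn\ast\vr^{0})(y,\eta)$ obtained via \eqref{eq:sign_char} and the change of variables along the characteristics, and (iii) an integration by parts in $\xi'$ using $\partial_{\xi'}\sgn=2\delta$; you simply run the algebra from the kinetic identity toward the $m$-integral, while the paper runs it in the opposite direction, which is an equivalent rearrangement. One thing you do that the paper leaves implicit is the justification that the characteristic flow on $\R^{N+1}$ is volume-preserving: you correctly observe $\div_{x}a^{\cdot,j}-\partial_{\xi}b^{j}=\sum_{i}\partial_{x_{i}}\partial_{\xi}A^{i,j}-\partial_{\xi}\sum_{i}\partial_{x_{i}}A^{i,j}=0$, so the Jacobian is identically one and the change of variables carries no extra factor, and you note this survives rough-path approximation because the Jacobian solves a linear RDE with vanishing trace coefficient — this is a small but genuine clarification of a step the paper passes over.
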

\begin{proof}
For all $s\le t$ we have
\begin{align*}
 & 2\int_{s}^{t}\int\vr_{t_{0}}(x',y,0,\eta,r)\vr_{t_{0}}(x,y,\xi,\eta,r)m(x,\xi,r)dxd\xi dx'dyd\eta dr\\
 & =\int_{s}^{t}\int\partial_{\xi'}\sgn(\xi')\vr_{t_{0}}(x',y,\xi',\eta,r)\vr_{t_{0}}(x,y,\xi,\eta,r)m(x,\xi,r)dxd\xi dx'd\xi'dyd\eta dr\\
 & =\int_{s}^{t}\int\left(\int\sgn(\xi')\vr_{t_{0}}(x',y,\xi',\eta,r)dx'd\xi'\right)\left(\int\partial_{\xi}\vr_{t_{0}}(x,y,\xi,\eta,r)m(x,\xi,r)dxd\xi\right)dyd\eta dr\\
 & +\int_{s}^{t}Err(t_{0},r)dr.
\end{align*}
It follows from \eqref{eq:sign_char} that 
\begin{align*}
 & \int\sgn(\xi')\vr_{t_{0}}(x',y,\xi',\eta,t)dx'd\xi'\\
 & =\int\sgn(\Xi_{(t,x',\xi')}(t-t_{0}))\vr^{0}\left(\begin{array}{cc}
X_{(t,x',\xi')}(t-t_{0})-y\\
\Xi_{(t,x',\xi')}(t-t_{0})-\eta
\end{array}\right)dx'd\xi'\\
 & =\int\sgn(\xi')\vr^{0}\left(\begin{array}{cc}
x'-y\\
\xi'-\eta
\end{array}\right)dx'd\xi'\\
 & =(\sgn\ast\vr^{0})(y,\eta).
\end{align*}
Hence,  there exists a null  $\mcN\subseteq[0,T]$ with $0\not\in\mcN$ such that, for every $t_{0}\ge0$ and all $(s,t)\in\D\left([t_{0},T]\setminus\mcN\right),$
\begin{align*}
 & 2\int_{s}^{t}\int\vr_{t_{0}}(x',y,0,\eta,r)\vr_{t_{0}}(x,y,\xi,\eta,r)m(x,\xi,r)dxd\xi dx'd\xi'dyd\eta dr\\
 & =\int_{s}^{t}\int(\sgn\ast\vr^{0})(y,\eta)\left(\int\partial_{\xi}\vr_{t_{0}}(x,y,\xi,\eta,r)m(x,\xi,r)dxd\xi\right)dyd\eta 
 +\int_{s}^{t}Err(t_{0},r)dr\\
 & =-\int(\sgn\ast\vr^{0})(y,\eta)(f\ast\vr_{t_{0}})(y,\eta,\cdot)|_{s}^{t}dyd\eta+\int_{s}^{t}Err(t_{0},r)dr.
\end{align*}
\end{proof}
\no As in the proof of Theorem \ref{thm:unique} let $\vr_{\ve}^{s},\vr_{\ve}^{v}$ be standard smooth approximations of Dirac masses and, for each $t_{0}\ge0$, let $\vr_{t_{0},\ve}(x,y,\xi,\eta,t)$ be as in \eqref{eq:transport_stable} with initial condition $\vr_{\ve}^{0}(x,y,\xi,\eta)=\vr_{\ve}^{s}(x-y)\vr_{\ve}^{v}(\xi-\eta)$, that is  $\vr_{t_{0},\ve}$ is as in \eqref{eq:eps_test_fct}.
\begin{lem}
\label{lem:convergence}Let $f$ be a generalized pathwise rough entropy/kinetic 
solution to \eqref{eq:scl}. There exists a null $\mcN\subseteq[0,T]$ with $0\not\in\mcN$ such that, for every $t_{0}\ge0$ and all $t\in[t_{0},T]\setminus\mcN$,  as $\ve\to0$
\begin{align*}
f\ast\vr_{t_{0},\ve}(y,\eta,t) & \to f(Y_{(t_{0},y,\eta)}(t),\z_{(t_{0},y,\eta)}(t),t),
\end{align*}
where $\vr_{t_{0},\ve}$ is as in \eqref{eq:eps_test_fct}.
\end{lem}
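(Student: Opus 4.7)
The plan is to reduce the convergence to the standard Lebesgue differentiation theorem after a measure-preserving change of variables along the rough characteristic flow. Unwinding the definition of the convolution along characteristics together with \eqref{eq:eps_test_fct},
\begin{align*}
(f\ast\vr_{t_{0},\ve})(y,\eta,t)=\int f(x,\xi,t)\,\vr_{\ve}^{s}\bigl(X_{(t,x,\xi)}(t-t_{0})-y\bigr)\,\vr_{\ve}^{v}\bigl(\Xi_{(t,x,\xi)}(t-t_{0})-\eta\bigr)\,dx\,d\xi.
\end{align*}
The identity recorded immediately before \eqref{eq:transport_stable} shows that the map $\Phi_{t}:(x,\xi)\mapsto(X_{(t,x,\xi)}(t-t_{0}),\Xi_{(t,x,\xi)}(t-t_{0}))$ is the inverse of the forward flow $\Psi_{t}:(y,\eta)\mapsto(Y_{(t_{0},y,\eta)}(t),\z_{(t_{0},y,\eta)}(t))$, and, by \eqref{eq:regularity_assumptions} and the rough-path results cited after \eqref{eq:rough-char}, this flow is a $C^{1}$ diffeomorphism of $\R^{N+1}$.

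The key remaining step is to verify that $\Psi_{t}$ is volume preserving. Written jointly in the variable $(x,\xi)$, the rough characteristic system \eqref{eq:rough-char} is driven by the vector fields
\begin{align*}
V_{j}(x,\xi):=\bigl(\partial_{\xi}A^{1,j}(x,\xi),\dots,\partial_{\xi}A^{N,j}(x,\xi),-\textstyle\sum_{i=1}^{N}\partial_{x_{i}}A^{i,j}(x,\xi)\bigr),\quad j=1,\dots,M,
\end{align*}
each of which is divergence-free on $\R^{N+1}$ since $\sum_{i}\partial_{x_{i}}\partial_{\xi}A^{i,j}-\partial_{\xi}\sum_{i}\partial_{x_{i}}A^{i,j}=0$. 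Approximating $z$ by smooth paths $z^{n}$ (possible since $z$ is a geometric rough path) and invoking the classical Liouville theorem gives $\det D\Psi_{t}^{n}\equiv 1$; the continuity of the rough flow with respect to the driver recalled in Appendix \ref{app:RP} then lets me pass to the limit and conclude $|\det D\Psi_{t}|\equiv 1$.

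With volume preservation at hand, the change of variables $(x',\xi')=\Phi_{t}(x,\xi)$ rewrites the integral above as
\begin{align*}
(f\ast\vr_{t_{0},\ve})(y,\eta,t)=\int F(x',\xi')\,\vr_{\ve}^{s}(x'-y)\,\vr_{\ve}^{v}(\xi'-\eta)\,dx'\,d\xi',
\end{align*}
where $F(x',\xi'):=f\bigl(Y_{(t_{0},x',\xi')}(t),\z_{(t_{0},x',\xi')}(t),t\bigr)$. Since $f(\cdot,\cdot,t)\in L^{1}\cap L^{\infty}(\R^{N+1})$ for $t$ outside the null set produced by Proposition \ref{rmk:rough_kinetic}, and $\Phi_{t}$ is a measure-preserving $C^{1}$ diffeomorphism, $F$ also lies in $L^{1}\cap L^{\infty}(\R^{N+1})$. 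The right-hand side is then the standard mollification of $F$ in the $(y,\eta)$-variables, and the Lebesgue differentiation theorem yields $(f\ast\vr_{t_{0},\ve})(y,\eta,t)\to F(y,\eta)$ as $\ve\to 0$ at almost every $(y,\eta)$, which is the claimed limit. The null $\mcN\subseteq[0,T]$ is inherited from Proposition \ref{rmk:rough_kinetic}.

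The substantive difficulty is the volume-preservation property of the rough flow; the rest is routine mollification. If smooth approximation of the driver is to be avoided, one can alternatively argue directly that $J(t):=\det D\Psi_{t}$ satisfies a rough differential equation whose driver is proportional to $(\div V_{j})\circ\Psi_{t}=0$, so that $J(t)\equiv J(t_{0})=1$.
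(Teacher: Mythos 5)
Your proof is correct and follows the same route as the paper: change variables along the inverse flow to turn the convolution along characteristics into a standard mollification of $f\circ\Psi_t$, then apply a.e.\ convergence of mollifiers. The paper treats the Jacobian step as immediate, whereas you usefully spell out that the joint vector fields $V_j$ are divergence-free on $\R^{N+1}$ (by equality of mixed partials) so the rough flow is volume preserving; the only minor quibble is that the null set $\mcN$ comes from $f\in L^{\infty}([0,T];L^{1})$ being defined only for a.e.\ $t$, not specifically from Proposition~\ref{rmk:rough_kinetic}.
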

\begin{proof}
The proof is immediate, since, for a.e. $t\in[t_{0},T]$, as  $\ve\to0$,
\begin{align*}
f\ast\vr_{t_{0},\ve}(y,\eta,t) & =\int f(x,\xi,t)\vr_{t_{0},\ve}(x,y,\xi,\eta,t)dxd\xi\\
 & =\int f(x,\xi,t)\vr_{\ve}^{s}(X_{(t,x,\xi)}(t-t_{0})-y)\vr_{\ve}^{v}(\Xi_{(t,x,\xi)}(t-t_{0})-\eta)dxd\xi\\
 & =\int f(Y_{(t_{0},x,\xi)}(t),\z_{(t_{0},x,\xi)}(t),t)\vr_{\ve}^{s}(x-y)\vr_{\ve}^{s}(\xi-\eta)dxd\xi\\
 & \to f(Y_{(t_{0},y,\eta)}(t),\z_{(t_{0},y,\eta)}(t),t), 
\end{align*}
and, in light of $f(x,\xi,0)=\chi(\xi,u_{0}(x))$,  this is true, in particular, for $t=0$.
\end{proof}

\subsection*{Stable a priori estimates}\label{sec:stable_apriori} We use here the main idea of the definition of the pathwise rough entropy/kinetic solution, namely the use of test functions that propagate along the characteristics of the kinetic equation, to improve  \eqref{takis2}.

\no We begin with the following preliminary result.

\begin{lem}
\label{lem:l1-bound}Let $f$ be a generalized pathwise rough entropy/kinetic  solution 
to \eqref{eq:scl}.  There exists a null  $\mcN\subseteq[0,T]$ such that, for all $t\in[0,T]\setminus\mcN$, 
\begin{align*}
\int|f|(x,\xi,t)dxd\xi\le & \|u_{0}\|_{1}.
\end{align*}
\end{lem}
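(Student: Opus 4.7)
The strategy mirrors the contraction argument in the proof of Theorem \ref{thm:unique}, but applied to a single solution $f$, with the functional $\int |f|\, dx\, d\xi$ replacing $\int (\chi^{(1)} - \chi^{(2)})^2\, dx\, d\xi$. The key observation is that Lemma \ref{lem:abs_val_part} already encodes, via the nonnegativity of the resulting entropy-measure integral, a one-sided estimate that can be iterated on a partition to yield the bound.

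Fix $t_{0} \in [0,T] \setminus \mcN$ and choose $\vr^{0}_\ve := \vr^s_\ve(\cdot)\vr^v_\ve(\cdot)$, exactly as in the proof of Theorem \ref{thm:unique}. Applying Lemma \ref{lem:abs_val_part} to $f$ and using that its LHS is nonnegative (since $m \ge 0$ and $\vr_{t_0,\ve}\ge 0$), one obtains, for all $(s,t) \in \D([t_0, T] \setminus \mcN)$,
\begin{equation*}
F^{t_0}_\ve(t) - F^{t_0}_\ve(s) \;\le\; \int_s^t Err(t_0, r, \ve)\, dr,
\end{equation*}
where $F^{t_0}_\ve(r) := \int (\sgn \ast \vr^0_\ve)(y,\eta)\, (f \ast \vr_{t_0, \ve})(y, \eta, r)\, dy\, d\eta$. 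Since the characteristic flow $(y, \eta) \mapsto (Y_{(t_0, y,\eta)}(r), \z_{(t_0, y, \eta)}(r))$ is volume-preserving (each vector field $(a^{\cdot, j}, -\text{div}_x A^{\cdot,j})$ is divergence-free in $(x,\xi)$), the quantity $f \ast \vr_{t_0, \ve}$ coincides with the usual Euclidean convolution of $\vr^0_\ve$ with the characteristic pullback $\tilde f_{t_0}(y, \eta, r) := f(Y_{(t_0, y, \eta)}(r), \z_{(t_0, y, \eta)}(r), r)$, while $(\sgn \ast \vr^0_\ve)(y,\eta) = \sgn^\ve(\eta)$ is a smooth bounded approximation of $\sgn(\eta)$. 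Letting $\ve \to 0$ and combining \eqref{eq:sign_char} with volume preservation then yields
\begin{equation*}
F^{t_0}_\ve(r) \;\longrightarrow\; \int \sgn(\eta) \tilde f_{t_0}(y,\eta, r) \, dy\, d\eta \;=\; \int |f|(x,\xi, r)\, dx\, d\xi \;=:\; F(r),
\end{equation*}
a limit which is independent of $t_0$; taking $r = 0$ and $t_0 = 0$ one gets $F(0) = \|u_0\|_1$.

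Finally one runs the partition argument at the end of the proof of Theorem~\ref{thm:unique}: pick $\Delta = \{0 = \tau_0 < \tau_1 < \ldots < \tau_M = t\} \subseteq [0,T]\setminus\mcN$ and apply the one-sided estimate on each $[\tau_i, \tau_{i+1}]$ with base point $\tau_i$. Using Lemma \ref{lem:err_est} to bound
\begin{equation*}
\int_{\tau_i}^{\tau_{i+1}} Err(\tau_i, r, \ve)\, dr \;\le\; C(\tau_{i+1} - \tau_i)^\alpha \int_{\tau_i}^{\tau_{i+1}} \int m(x,\xi,r) \, dx\, d\xi\, dr
\end{equation*}
uniformly in $\ve$, then letting $\ve \to 0$ on each sub-interval and summing gives
\begin{equation*}
F(t) - F(0) \;\le\; C \|\Delta\|^\alpha \int_0^t \int m(x,\xi,r) \, dx\, d\xi\, dr.
\end{equation*}
Letting $\|\Delta\| \to 0$ then yields $F(t) \le \|u_0\|_1$, which is the claim. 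The only delicate point is the exchange of the limits $\ve \to 0$ and $\|\Delta\|\to 0$, but this is handled exactly as in the uniqueness argument thanks to the $\ve$-uniform bound provided by Lemma \ref{lem:err_est}.
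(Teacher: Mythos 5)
Your proof is correct, but it follows a genuinely different path from the paper's own argument. The paper proves Lemma~\ref{lem:l1-bound} directly: it plugs the special (non-convolution) test function $\vr_{t_0}=\vp(\Xi_{(t,x,\xi)}(t-t_0))$, with $\vp$ monotone and $\vp(0)=0$, into the kinetic identity at $(y,\eta)=(0,0)$, observes that the resulting measure term $\int\partial_\xi\vp(\Xi)\,\partial_\xi\Xi\, m$ is nonnegative on short intervals because Lemma~\ref{lem:hoelder_rp} gives $|\partial_\xi\Xi_{(r,x,\xi)}(r-s)-1|\le C|r-s|^\a$, extends to $\vp=\sgn$ by Fatou, and iterates over a partition. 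You instead recycle the convolution machinery already built for uniqueness and reconstruction: the nonnegativity of the double-convolution term in Lemma~\ref{lem:abs_val_part}, the pointwise limit $f\ast\vr_{t_0,\ve}\to f(Y_{(t_0,\cdot)}(r),\z_{(t_0,\cdot)}(r),r)$ from Lemma~\ref{lem:convergence} together with volume preservation of the characteristic flow and \eqref{eq:sign_char}, and the $\ve$-uniform error bound of Lemma~\ref{lem:err_est} feeding the same telescoping/partition device as in Theorem~\ref{thm:unique}. Both are sound (there is no circularity, since Lemmas~\ref{lem:abs_val_part}, \ref{lem:convergence} and \ref{lem:err_est} are stated for generalized solutions and precede Lemma~\ref{lem:l1-bound}). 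The paper's route is more elementary and self-contained and sidesteps the double integral in $(x',\xi',x,\xi)$; yours has the merit of unifying the $L^1$ bound, the contraction estimate and the reconstruction proposition under one functional-plus-partition scheme, at the cost of invoking the heavier Lemma~\ref{lem:err_est}. One small remark: the divergence-free/volume-preservation fact you use to identify $\lim_\ve F^{t_0}_\ve(r)$ with $\int|f|(x,\xi,r)\,dx\,d\xi$ is already implicit in the paper's change of variables inside the proof of Lemma~\ref{lem:convergence}, so it is legitimately available, but it is worth noting explicitly, as you do, since the whole argument hinges on the limit being $t_0$-independent.
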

\begin{proof}
Fix  $\vp\in C_{c}^{\infty}(\R)$  monotone with $ $$\vp(0)=0$ and  use $\vr_{t_{0}}=\vp(\Xi_{(t,x,\xi)}(t-t_{0}))$ as a test function  with $(y,\eta)=(0,0)$ to get a null $\mcN\subseteq[0,T]$ such that $0\not\in\mcN$ and, for all $t_0 \in [0,T]$, $(s,t) \in \D\left([t_0,T]\setminus\mcN\right),$
\begin{align*}
\int f(x,\xi,t)\vp(\Xi_{(t,x,\xi)}(t-t_{0}))dxd\xi -& \int f(x,\xi,s)\vp(\Xi_{(s,x,\xi)}(s-t_{0}))dxd\xi   \\
& =-\int_{s}^{t}\int\partial_{\xi}\vp(\Xi_{(r,x,\xi)}(r-t_{0}))m(x,\xi,r)dxd\xi dr.
\end{align*}
Choosing $s=t_{0}\not\in\mcN$ we find
\begin{align}
\int_{s}^{t}\int \partial_{\xi}\vp(\Xi_{(r,x,\xi)}(r-s))\partial_{\xi}\Xi_{(r,x,\xi)}(r-s)m(x,\xi,r)d\xi dxdr\nonumber \\
+\int f(x,\xi,t)\vp(\Xi_{(t,x,\xi)}(t-s))dxd\xi\le & \int f(x,\xi,s)\vp(\xi)dxd\xi.\label{eq:l1_1}
\end{align}
Since 
\[
\sgn(\Xi_{(t,x,\xi)}(t-s))=\sgn(f(x,\xi,t))=\sgn(\xi),
\]
it follows that, for all  $t\in[t_{0},T]\setminus\mcN$ and a.e. in $x$ and $\xi$,
\[
f(x,\xi,t)\vp(\Xi_{(t,x,\xi)}(t-t_{0}))\ge0.
\]
In view of Lemma \ref{lem:hoelder_rp}, we know that there exists $C=C(R)>0$ such that,  for all $\a$-Hölder rough paths $z$ with $\|z\|_{\a-\Hoel;[0,T]}\le R$ and all $r\in[0,t]$,
\begin{equation}\label{eq:der_stability}
|\partial_{\xi}\Xi_{(r,x,\xi)}(t)-1|=|\partial_{\xi}\Xi_{(r,x,\xi)}(t)-\partial_{\xi}\Xi_{(r,x,\xi)}(0)|\le\|\partial_{\xi}\Xi_{(r,x,\xi)}(t)\|_{\a-\Hoel;[0,r]}|t|^{\a} \le C|t|^{\a}.
\end{equation}
Hence, for $h>0$ small enough and all $|t-s|\le h$, we have
\begin{align*}
\inf_{r\in[s,t]}\partial_{\xi}\Xi_{(r,x,\xi)}(r-s)\ge & 0,
\end{align*}
and,  hence,
\[
\int_{s}^{t}\int \partial_{\xi}\vp \Xi_{(r,x,\xi)}(r-s)(\partial_{\xi}\Xi_{(r,x,\xi)}(r-s)m(x,\xi,r)d\xi dxdr \geq 0,
\]
and, from \eqref{eq:l1_1}, for all $s,t\in\D\left([0,T]\setminus\mcN\right)$ with $|t-s|\le h$,
\begin{align*}
\int f(x,\xi,t)\vp(\Xi_{(t,x,\xi)}(t-s))dxd\xi\le & \int f(x,\xi,s)\vp(\xi)dxd\xi.
\end{align*}

\no We now extend the previous estimate to $\vp:\R\to\R$ which are  measurable and monotone with $\vp(0)=0$. Given such $\vp$ let  $\vp^{n}\in C_{c}^{\infty}(\R)$ be monotone functions  such that  $\vp^{n}(0)=0$ and $\vp^{n}\to\vp$ pointwise.  It follows from Fatou's Lemma that
\begin{align}
\int f(x,\xi,t)\vp(\Xi_{(t,x,\xi)}(t-s))dxd\xi\le & \liminf_{n\to\infty}\int f(x,\xi,s)\vp^{n}(\xi)dxd\xi.\label{eq:l1_2}
\end{align}

\no Next observe that we may choose $M=M(R)>0$, uniformly for all $\a$-Hölder rough paths $z$ with $\|z\|_{\a-\Hoel;[0,T]}\le R$ and, in view of \eqref{eq:der_stability}, for each $t\in[0,T]\setminus\mcN$, a partition $\D=\{0=t_{0}\le\dots\le t_{M}=t\}\subseteq[0,T]\setminus\mcN$ such that, for 
all $ i=0,\dots,M-1,$
\[
\inf_{r\in[t_{i},t_{i+1}]}\partial_{\xi}\Xi_{(r,x,\xi)}(r-s)\ge0.
\]

\no The claim now follows from an elementary iterative argument: Choosing $\vp=\sgn$, $\vp^{n}$ such that $|\vp^{n}|\le|\vp|$ and $(s,t)=(0,t_{1})$ in \eqref{eq:l1_2} we find 
$$\int f(x,\xi,t_{1})\sgn(\Xi_{(t_{1},x,\xi)}(t_{1}))dxd\xi\le  \int f(x,\xi,0)\sgn(\xi)dxd\xi
= \int|f|(x,\xi,0)dxd\xi 
=  \|u_{0}\|_{1}^{1},$$
and, since $\sgn(\Xi_{(t_{1},x,\xi)}(t_{1}))=\sgn(\xi)$, 
$$\int|f|(x,\xi,t_{1})dxd\xi\le  \int|f|(x,\xi,0)dxd\xi
=  \|u_{0}\|_{1}^{1}.$$

\no Iterating this argument over $i$ yields, for all $t\in[0,T]\setminus\mcN$,
\begin{align*}
\int|f|(x,\xi,t)dxd\xi \le & \|u_{0}\|_{1}^{1}.
\end{align*}
\end{proof}

\no We present now the estimate on the total mass which is new even in the class of homogeneous conservation laws without rough time dependence.

\begin{lem} \label{lem:l2-bound}Let $f$ be a generalized pathwise rough entropy/kinetic 
solution to \eqref{eq:scl}. There exists a null  $\mcN\subseteq[0,T]$ and, for each $R>0$,  $M=M(R)>0$ such that, for all driving signals ${z}$ with $\|{z}\|_{\a-\Hoel;[0,T]}\le R$ and   all $t\in[0,T]\setminus\mcN$,
\begin{align*}
\frac{1}{2}\int_{0}^{t}\int m(x,\xi,r)d\xi dxdr+\int f(x,\xi,t)\xi dxd\xi\le & \frac{1}{2}\|u_{0}\|_{2}^{2}+M\|u_{0}\|_{1}.
\end{align*}
\end{lem}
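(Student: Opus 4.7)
The plan is to follow the same template as the proof of Lemma \ref{lem:l1-bound}: choose as test function $\vr_{t_{0}}(x,\xi,t)=\vp(\Xi_{(t,x,\xi)}(t-t_{0}))$ with $\vp\in C_c^\infty(\R)$ monotone increasing and $\vp(0)=0$, and then pass to the limit $\vp(\xi)\to\xi$ (via a truncated sequence $\vp^n$ with $\vp^n(\xi)=\xi$ on $[-n,n]$). The limit passage is justified, as in Lemma \ref{lem:l1-bound}, by the sign identities $\sgn(\Xi_{(t_0+h,x,\xi)}(h))=\sgn(\xi)=\sgn(f)$ (from \eqref{eq:sign_char} and \eqref{eq:gen_kinetic_measure}), which make $f\,\vp(\Xi)\ge 0$ and $\vp'(\Xi)\partial_\xi\Xi\,m\ge 0$, together with the $L^1$ bound of Lemma \ref{lem:l1-bound} to control all terms in the limit.

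On a short interval $[t_{0},t_{0}+h]\subseteq[0,T]\setminus\mcN$, the resulting identity reads
\[
\int f(t_{0}+h)\,\Xi_{(t_{0}+h,x,\xi)}(h)\,dxd\xi+\int_{t_{0}}^{t_{0}+h}\!\!\int\partial_{\xi}\Xi\cdot m\,dxd\xi dr\;=\;\int f(t_{0})\xi\,dxd\xi.
\]
The observation distinguishing this argument from the one in Lemma \ref{lem:l1-bound} is that, since $\div A\in\Lip^{\g+2}$ is \emph{bounded}, standard rough path estimates (Lemma \ref{lem:hoelder_rp}, together with results of \cite{FV10}) yield, for every rough path with $\|z\|_{\a-\Hoel;[0,T]}\le R$, bounds \emph{uniform in $\xi$} of the form
\[
\bigl|\Xi_{(t_{0}+h,x,\xi)}(h)-\xi\bigr|\le C(R)h^{\a},\qquad\bigl|\partial_{\xi}\Xi_{(t_{0}+h,x,\xi)}(h)-1\bigr|\le C(R)h^{\a}.
\]
Combined with $\sgn(\Xi)=\sgn(f)$ and Lemma \ref{lem:l1-bound}, and selecting $h=h(R)$ small enough that $C(R)h^{\a}\le\tfrac12$, I obtain the per-step estimate
\[
G(t_{0}+h)+\tfrac12\int_{t_{0}}^{t_{0}+h}\!\!\int m\,dxd\xi dr\;\le\;G(t_{0})+C(R)h^{\a}\|u_{0}\|_{1},
\]
where $G(t):=\int f(x,\xi,t)\xi\,dxd\xi$.

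To conclude, I fix $h=h(R)$ once and for all, choose a partition $\{0=t_0\le\cdots\le t_N=t\}\subseteq[0,T]\setminus\mcN$ of mesh at most $h$ (hence $N\le T/h+1$), sum the per-step estimates over $i=0,\dots,N-1$, and use $G(0)=\tfrac12\|u_0\|_2^2$ to obtain
\[
G(t)+\tfrac12\int_{0}^{t}\!\!\int m\,dxd\xi dr\;\le\;\tfrac12\|u_{0}\|_{2}^{2}+N\cdot C(R)h^{\a}\|u_{0}\|_{1}\;\le\;\tfrac12\|u_{0}\|_{2}^{2}+M(R)\|u_{0}\|_{1},
\]
with $M(R):=(T/h+1)C(R)h^{\a}$. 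The critical subtlety, and the reason the argument is delicate, is that the per-step error must be independent of the quantity being bounded: a naive estimate $|\Xi-\xi|\le C(R)h^{\a}(1+|\xi|)$ would produce an error of order $h^\a G(t_0+h)$, which upon iteration amounts to a Gronwall-type inequality $G(t_N)\le G(0)(1+Ch^\a)^N$ that blows up as the mesh tends to zero. The uniform-in-$\xi$ bound $|\Xi-\xi|\le C(R)h^\a$, which hinges on the boundedness of $\div A$ inherited from $b\in\Lip^{\g+2}$, replaces this blow-up by a partition-independent constant through Lemma \ref{lem:l1-bound}, yielding the desired estimate.
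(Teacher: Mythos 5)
Your proof follows essentially the same route as the paper's: test against $\vp(\Xi_{(t,x,\xi)}(t-t_0))$, use the sign identity \eqref{eq:sign_char}, pass $\vp\to\mathrm{id}$ by Fatou, invoke the $\xi$-uniform rough-path bounds $|\Xi-\xi|\le Ch^\alpha$ and $|\partial_\xi\Xi-1|\le Ch^\alpha$ to get a per-step inequality, control the replacement of $\Xi$ by $\xi$ via Lemma~\ref{lem:l1-bound}, and iterate over a fixed partition of mesh $h(R)$. The only cosmetic difference is bookkeeping: you carry the factor $h^\alpha$ in the per-step error and compensate with $N\sim T/h$ steps, whereas the paper simply bounds $|\Xi-\xi|\le 1$ per step and counts $M(R)$ steps; the final constants agree, and your closing remark on why the $\xi$-uniform bound (as opposed to a $(1+|\xi|)$-growing one) is essential to avoid a Gronwall blow-up is correct and well taken.
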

\begin{proof}
In view of  \eqref{eq:l1_1},  for $\vp\in C_{c}^{\infty}(\R)$  monotone with $\vp(0)=0$, a null set  $\mcN\subseteq[0,T]$ with $0\not\in\mcN$ and for all  $(s,t)\in\D\left([0,T]\setminus\mcN\right)$, we have 
\begin{align*}
\int_{s}^{t}\int\partial_{\xi}\vp(\Xi_{(r,x,\xi)}(r-s))\partial_{\xi}\Xi_{(r,x,\xi)}(r-s)m(x,\xi,r)d\xi dxdr\\
+\int f(x,\xi,t)\vp(\Xi_{(t,x,\xi)}(t-s))dxd\xi\le & \int f(x,\xi,s)\vp(\xi)dxd\xi.
\end{align*}
Moreover, in light of  \eqref{eq:der_stability}, for $h>0$ small enough and all for all $|t-s|\le h$, we have
\begin{align*}
\inf_{r\in[s,t]}\partial_{\xi}\Xi_{(r,x,\xi)}(r-s)\ge & \frac{1}{2}.
\end{align*}
Hence, for all $s,t\in\D\left([0,T]\setminus\mcN\right)$ with $|t-s|\le h$,
\begin{align*}
\frac{1}{2}\int_{s}^{t}\int\partial_{\xi}\vp(\Xi_{(r,x,\xi)}(r-s))m(x,\xi,r)d\xi dxdr\\
+\int f(x,\xi,t)\vp(\Xi_{(t,x,\xi)}(t-s))dxd\xi\le & \int f(x,\xi,s)\vp(\xi)dxd\xi,
\end{align*}

\no Let $\vp\in C^{1}(\R)$ be monotone with $\vp(0)=0$ and choose  $\vp^{n}\in C_{c}^{\infty}(\R)$ monotone with $\vp^{n}(0)=0$ and such that $\vp^{n}\to\vp$ and $\partial_{\xi} \vp^{n} \to \partial_{\xi} \vp$ pointwise. Again Fatou's Lemma yields,  for all $s,t\in\D\left([0,T]\setminus\mcN\right)$ with $|t-s|\le h$,
\begin{align*}
\frac{1}{2}\int_{s}^{t}\int(\partial_{\xi}\vp)(\Xi_{(r,x,\xi)}(r-s))m(x,\xi,r)d\xi dxdr\\
+\int f(x,\xi,t)\vp(\Xi_{(t,x,\xi)}(t-s))dxd\xi\le & \liminf_{n\to\infty}\int f(x,\xi,s)\vp^{n}(\xi)dxd\xi.
\end{align*}
Letting  $\vp(\xi)=\xi$ in the inequality above gives
\begin{align*}
\frac{1}{2}\int_{s}^{t}\int m(x,\xi,r)d\xi dxdr+\int f(x,\xi,t)\Xi_{(t,x,\xi)}(t-s)dxd\xi\le & \int f(x,\xi,s)\xi dxd\xi.
\end{align*}
Once again it follows from  Lemma \ref{lem:hoelder_rp} that, for each  $R>0$, there exists $C=C(R)>0$ such that, for all $\a$-Hölder rough paths ${z}$ with $\|{z}\|_{\a-\Hoel;[0,T]}\le R$ and all $r\in [0,t]$, 
\begin{equation}\label{eq:der_stability-1}
|\Xi_{(r,x,\xi)}(t)-\xi|  =|\Xi_{(r,x,\xi)}(t)-\Xi_{(r,x,\xi)}(0)|
  \le\|\Xi_{(r,x,\xi)}(t)\|_{\a-\Hoel;[0,r]}|t|^{\a}  \le C|t|^{\a}.
\end{equation}

\no In light of  \eqref{eq:der_stability} and \eqref{eq:der_stability-1}, for each $R>0$ we can find $M=M(R)>0$ such that, for all $\a$-Hölder rough paths ${z}$ with $\|{z}\|_{\a-\Hoel;[0,T]}\le R$ and  for each $t\in[0,T]\setminus\mcN$, there exists   a partition $\D=\{0=t_{0}\le\dots\le t_{M}=t\}\subseteq[0,T]\setminus\mcN$ such that $ \|\D\|  \le1$ and, for all $i=0,\dots,M-1$,
\begin{align*}
\inf_{r\in[t_{i},t_{i+1}]}\partial_{\xi}\Xi_{(r,x,\xi)}(r-t_{i})  \ge\frac{1}{2}\quad \text{and} 
\sup_{r\in[t_{i},t_{i+1}]}|\Xi_{(r,x,\xi)}(r-t_{i})-\xi|  \le1. 
\end{align*}

\no Observe that 
$$\frac{1}{2}\int_{0}^{t_{1}}\int m(x,\xi,r)d\xi dxdr+\int f(x,\xi,t_{1})\Xi_{(t_{1},x,\xi)}(t_{1})dxd\xi\le  \int f(x,\xi,0)\xi dxd\xi
=  \frac{1}{2}\|u_{0}\|_{2}^{2}.$$
Moreover, using Lemma \ref{lem:l1-bound}, we obtain 
\begin{align*}
\int f(x,\xi,t_{1})\xi dxd\xi= & \int f(x,\xi,t_{1})\Xi_{(t_{1},x,\xi)}(t_{1})dxd\xi
  +\int f(x,\xi,t_{1})(\xi-\Xi_{(t_{1},x,\xi)}(t_{1}))dxd\xi\\
\le & \int f(x,\xi,t_{1})\Xi_{(t_{1},x,\xi)}(t_{1})dxd\xi+\int|f|(x,\xi,t_{1})dxd\xi\\
\le & \int f(x,\xi,t_{1})\Xi_{(t_{1},x,\xi)}(t_{1})dxd\xi+\|u_{0}\|_{1}.
\end{align*}
Thus 
\begin{align*}
\frac{1}{2}\int_{0}^{t_{1}}\int m(x,\xi,r)d\xi dxdr+\int f(x,\xi,t_{1})\xi dxd\xi\le & \int f(x,\xi,0)\xi dxd\xi+\|u_{0}\|_{1}.
\end{align*}
Iterating the above  argument over $i$ yields
\begin{align*}
\frac{1}{2}\int_{0}^{t}\int m(x,\xi,r)d\xi dxdr+\int f(x,\xi,t)\xi dxd\xi\le & \int f(x,\xi,0)\xi dxd\xi+M\|u_{0}\|_{1}\\
= & \frac{1}{2}\|u_{0}\|_{2}^{2}+M\|u_{0}\|_{1}.
\end{align*}

\end{proof}

\subsection*{The construction of generalized pathwise rough entropy/kinetic solutions}\label{sec:generalized_construction} The following theorem asserts the existence 
of the generalized pathwise rough entropy/kinetic solutions.

\begin{thm}\label{thm:generalized_existence}
For each   $u_{0}\in(L^{1}\cap L^{2})(\R^{N})$ there exists a generalized pathwise rough entropy/kinetic solution $f\in L^{\infty}([0,T];L^{1}(\R^{N}\times\R))\cap L^{\infty}(\R^{N}\times\R\times[0,T])$ to \eqref{eq:scl}.\end{thm}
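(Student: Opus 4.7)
The plan is to construct the generalized solution as a weak-$\star$ limit of classical kinetic solutions to regularized problems. Since $z$ is geometric, there is a sequence $z^n \in C^\infty([0,T];\R^M)$ with $z^n \to z$ in the $\a$-H\"older rough path metric. For each $n$, classical theory (as in \cite{D06,P02}) produces a kinetic solution $(\chi^n,m^n)$ to \eqref{eq:kinetic_form-1-4-1}, and by Lemma \ref{lem:kinetic-pathwise} each $\chi^n$ is a pathwise rough entropy/kinetic solution driven by $z^n$; in particular, by Proposition \ref{rmk:rough_kinetic}, for the test functions $\vr_{t_0}^n$ built from the characteristics of \eqref{eq:rough-char} for $z^n$,
\[
\vr_{t_0}^n \ast \chi^n(y,\eta,t) - \vr_{t_0}^n \ast \chi^n(y,\eta,s) = -\int_s^t \!\!\int \partial_\xi \vr_{t_0}^n(x,y,\xi,\eta,r)\,m^n(x,\xi,r)\,dxd\xi dr
\]
outside a common null set.

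Next, I would apply the stable \emph{a priori} estimates of Lemma \ref{lem:l1-bound} and Lemma \ref{lem:l2-bound}. Crucially, their constants depend on $z^n$ only through $\|z^n\|_{\a-\Hoel;[0,T]}$, which is uniformly bounded by some $R>0$. This produces uniform-in-$n$ estimates $\|\chi^n(\cdot,t)\|_{L^1} \le \|u_0\|_1$, $|\chi^n|\le 1$, and $\int_0^T\!\!\int m^n\,dxd\xi dr \le C$. By Banach--Alaoglu, pass along a subsequence to $\chi^n \rightharpoonup f$ weak-$\star$ in $L^\infty(\R^N\times\R\times[0,T])$ and $m^n \rightharpoonup m$ weak-$\star$ in $\mcM(\R^N\times\R\times[0,T])$. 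The identity $\partial_\xi \chi^n = \d(\xi) - \d(\xi-u^n)$ passes distributionally to $\partial_\xi f = \d(\xi)-\nu$ with $\nu \ge 0$ (each $\d(\xi-u^n)$ is a non-negative distribution, and non-negativity survives weak-$\star$ limits); the pointwise constraints $|f| = \sgn(\xi) f \le 1$ are inherited from those on $\chi^n$ by standard convex lower-semicontinuity arguments on the Young measures.

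The essential step is to pass to the limit in the integrated identity above. Here the driving input is the continuity of the It\^o--Lyons map: since $z^n \to z$ in $C^{0,\a}([0,T];G^{\lfloor1/\a\rfloor}(\R^N))$ and the coefficients are in $\Lip^{\g+2}$ with $\g>1/\a$, Appendix \ref{app:RP} (and the stability of flows in \cite{FV10}) yields that $(X^n,\Xi^n)$ and their $\xi$-derivatives converge locally uniformly to $(X,\Xi)$ and its $\xi$-derivatives. Consequently, for any $\vr^0 \in C_c^\infty(\R^{N+1})$, the test functions $\vr_{t_0}^n$ and $\partial_\xi \vr_{t_0}^n$ converge uniformly on compacts with uniformly compact support in $(x,\xi)$ (once $(y,\eta)$ is fixed). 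Coupling the strong convergence of the test functions with the weak-$\star$ convergence of $\chi^n$ and $m^n$ and a Fubini argument for the time integral, I would pass to the limit to obtain the required identity for $(f,m)$ outside a null set. The initial condition $f(\cdot,\cdot,0)=\chi(u_0(\cdot),\cdot)$ is inherited from the approximants, yielding that $f$ is a generalized pathwise rough entropy/kinetic solution in the sense of Definition \ref{def:path_e-soln-1}.

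The hardest step I anticipate is the passage to the limit on the right-hand side, where the weak-$\star$ convergence of $m^n$ must be matched against the pointwise convergence of $\partial_\xi \vr_{t_0}^n$; both the $C^1$-convergence of the characteristic flows \emph{and} a uniform control of $\|\partial_\xi \Xi^n\|_\infty$ on the support of the test function are needed, and these must be provided by the stability estimate Lemma \ref{lem:hoelder_rp} applied uniformly to the $z^n$. A secondary delicate point is ensuring that the exceptional null set $\mcN$ can be chosen common for all test functions $\vr^0$ in a countable dense family and all $t_0$ in a countable dense subset of $[0,T]$, after which one extends by approximation exactly as in the proof of Proposition \ref{rmk:rough_kinetic}.
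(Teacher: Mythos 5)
Your construction follows the same strategy as the paper: regularize $z$ by smooth $z^n$, use the $n$-independent estimates from Lemmas \ref{lem:l1-bound} and \ref{lem:l2-bound} (whose constants depend only on $\|z^n\|_{\a-\Hoel}$), extract weak-$\star$ limits $f$ and $m$, pass to the limit using the continuity of the characteristic flows in the rough path topology, and conclude. The structural facts $|f|=\sgn(\xi)f\le 1$ and $\partial_\xi f = \d(\xi)-\nu$ are, as you argue, inherited from the approximants; the paper simply cites \cite[Lemma 2.3.1]{P02} for this.

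There is, however, a genuine (though repairable) gap in the limit passage. You set up the kinetic identity in the pointwise-in-time form of Proposition \ref{rmk:rough_kinetic}, which contains the time-slices $\vr_{t_0}^n\ast\chi^n(y,\eta,t)$ and $\vr_{t_0}^n\ast\chi^n(y,\eta,s)$ at \emph{fixed} times. Weak-$\star$ convergence of $\chi^n$ in $L^\infty(\R^N\times\R\times[0,T])$ does not imply that the time-slices $\chi^n(\cdot,\cdot,t)$ converge weak-$\star$ to $f(\cdot,\cdot,t)$ for any fixed $t$, even $t$ outside a null set; so the left-hand side of your identity cannot be passed to the limit term-by-term. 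The paper avoids this by passing to the limit in the weak-in-time form \eqref{eq:approx_rough_kinetic}, i.e.\ with an additional test function $\vp\in C_c^\infty([0,T))$, which is exactly the form in Definition \ref{def:path_e-soln-1}; the quantity $\int_0^T \partial_t\vp(r)\,(\vr_{t_0}^n\ast\chi^n)(y,\eta,r)\,dr$ is a genuine space-time integral and is manifestly compatible with $L^\infty$ weak-$\star$ convergence. Your phrase ``a Fubini argument for the time integral'' may be gesturing at this, but the step is not justified as written. The fix is to carry out the limit in the weak-in-time form throughout and only afterwards invoke the analogue of Proposition \ref{rmk:rough_kinetic} for generalized solutions if the integrated form is needed.
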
 
\begin{proof}
Given a geometric rough path ${ z}$ choose smooth  ${z}^{n}$ such that   ${z}^{n}\to {z}$ in the rough paths metric, that is   in $C^{0,\beta-\text{H\"ol}}_0([0,T],G^{\lfloor 1/\beta \rfloor}(\R^N))$, 
and consider the unique kinetic solutions $(u^{n},m^{n})$ to \eqref{eq:scl}, that is  $\chi^{n}=\chi(u^{n})$ solving \eqref{eq:kinetic_form-1-4-1}. 

\no We use  Lemma \ref{lem:kinetic-pathwise} with  $t_{0}\ge0$ and test functions $\vr_{t_{0}}^{n}$ given by \eqref{eq:transport} (with $z$ replaced by $z^{n}$) and  $\vr^{0}\in C_{c}^{\infty}(\R^{N+1})$ to find, for all $\vp\in C_{c}^{\infty}([0,T))$,
\begin{align}\label{eq:approx_rough_kinetic_1}
\int_{0}^{T}\partial_{t}\vp(r)(\vr_{t_{0}}^{n}\ast\chi^{n})(y,\eta,r)dr+ &\vp(0)(\vr_{t_{0}}^{n}\ast\chi^{n})(y,\eta,0)\\ 
& =\int_{0}^{T}\int\vp(r)\partial_{\xi}\vr_{t_{0}}^{n}(x,y,\xi,\eta,r)m^{n}(x,\xi,r)dxd\xi dr, \nonumber 
\end{align}
and, hence,
\begin{align}\label{eq:approx_rough_kinetic}
 & \int_{0}^{T}\int\partial_{t}\vp(r)\vr_{t_{0}}^{n}(x,y,\xi,\eta,r)\chi^{n}(x,\xi,r)dxd\xi dr+\int\vp(0)\vr_{t_{0}}^{n}(x,y,\xi,\eta,0)\chi(u_{0}(x),\xi)dxd\xi\\
 & =\int_{0}^{T}\int\vp(r)\partial_{\xi}\vr_{t_{0}}^{n}(x,y,\xi,\eta,r)m^{n}(x,\xi,r)dxd\xi dr. \nonumber 
\end{align}
Since,  for all $t\in[0,T],$ 
\[
\vr_{t_{0}}^{n}(x,y,\xi,\eta,t)=\vr^{0}\left(\begin{array}{cc}
X_{(t,x,\xi)}^{n}(t-t_0)-y\\
\Xi_{(t,x,\xi)}^{n}(t-t_0)-\eta
\end{array}\right),
\]
and, as $n\to\infty,$ (see Appendix \ref{app:RP})
\[
\sup_{x,\xi}\left\Vert \left(\begin{array}{c}
X_{(\cdot,x,\xi)}^{n}(\cdot-t_0)\\
\Xi_{(\cdot,x,\xi)}^{n}(\cdot-t_0)
\end{array}\right)-\left(\begin{array}{c}
X_{(\cdot,x,\xi)}(\cdot-t_0)\\
\Xi_{(\cdot,x,\xi)}(\cdot-t_0)
\end{array}\right)\right\Vert _{C^{0}([0,T])}\to0,
\]
we have, as $n\to \infty$ and  uniformly in $(x,\xi,t)$,
\[
\vr_{t_{0}}^{n}(x,y,\xi,\eta,t)\to\vr_{t_{0}}(x,y,\xi,\eta,t)
\]


\no Moreover, standard rough path  estimates (see Appendix \ref{app:RP}) yield $C>0$ such that  
\begin{equation}
\sup_{x,\xi}\left\Vert \left(\begin{array}{c}
X_{(\cdot,x,\xi)}^{n}(\cdot-t_0)-x\\
\Xi_{(\cdot,x,\xi)}^{n}(\cdot-t_0)-\xi
\end{array}\right)\right\Vert _{C^{0}([0,T])}\le C. \label{eq:unif_bound}
\end{equation}

\no An immediate consequence of \eqref{eq:unif_bound} and the fact that  $\vr^{0}$ has compact support is that, for each $(y,\eta)$, the $\vr_{t_{0}}^{n}$'s also  have uniform in $n$  compact support, that is  there exists a compact  $K\subseteq\R^{N}\times\R$ such that, for all $(x,\xi)\not\in K$, 
\[
\vr_{t_{0}}^{n}(x,y,\xi,\eta,t)=0.
\]
In addition Lemma~\ref{lem:l1-bound} yields
\[
\sup_{t\in[0,T]\setminus\mcN}\|\chi^{n}(\cdot,\cdot,t)\|_{L^{1}(\R^{N}\times\R)}\le\|u_{0}\|_{1}.
\]
Using $|\chi^{n}|\le1$ this implies that, along a subsequence, which for simplicity is denoted by $\chi^{n}$,
$$\chi^{n}\rightharpoonup f\quad\text{in }L^{\infty}(\R^{N}\times\R\times[0,T]) \text{ weak } \star \ \text{ and} \  \|f\|_{L^{\infty}([0,T];L^{1}(\R^{N}\times\R))}\le\|u_{0}\|_{1}.$$
It then follows from \cite[Lemma 2.3.1]{P02} that there exists  a non-negative measure $\nu$ such that 
\begin{align*}
|f|(x,\xi,t)  =\sgn(\xi)f(x,\xi,t)\le1 \ \ \text{and } \ \ 
\partial_{\xi}f(x,\xi,t)  =\d(\xi)-\nu(t,x,\xi).
\end{align*}

\no Next we use Lemma \ref{lem:l2-bound}, with  $\int\chi^{n}(x,\xi,t)\xi dxd\xi=\frac{1}{2}\|u^{n}(t)\|_{2}^{2}$, to get,  for all $t\in[0,T]\setminus\mcN$,
\begin{align*}
\frac{1}{2}\int_{0}^{t}\int m^{n}(x,\xi,r)d\xi dxdr+\frac{1}{2}\|u^{n}(t)\|_{2}^{2}\le & \frac{1}{2}\|u_{0}\|_{2}^{2}+M\|u_{0}\|_{1};
\end{align*}
note that $M$ may be chosen uniformly in $n$ since $\|{z}^{n}\|_{\a-\Hoel;[0,T]}\le R$ uniformly for some $R>0$. 

\no It follows that  there exists  a weak $\star $ convergent subsequence $m^{n}\rightharpoonup m$ and taking the limit in \eqref{eq:approx_rough_kinetic} yields
\begin{align*}
 & \int_{0}^{T}\int\partial_{t}\vp(r)\vr_{t_{0}}(x,y,\xi,\eta,r)f(x,\xi,r)dxd\xi dr+\int\vp(0)\vr_{t_{0}}(x,y,\xi,\eta,0)\chi(u_{0}(x),\xi)dxd\xi\\
 & =\int_{0}^{T}\int\vp(r)\partial_{\xi}\vr_{t_{0}}(x,y,\xi,\eta,r)m(x,\xi,r)dxd\xi dr.
\end{align*}

\no Hence, $f$ is a generalized pathwise rough entropy/kinetic solution 
to \eqref{eq:scl}.
\end{proof}

\subsection*{The reconstruction of a pathwise rough entropy/kinetic solution from a generalized one}
We show here how to go from generalized to  pathwise rough entropy/kinetic  solution.
\begin{prop}
\label{prop:reconstruction}Let $f$ be a generalized pathwise rough entropy/kinetic 
solution to \eqref{eq:scl}. There exists  
\newline $u\in L^{\infty}([0,T];L^{1}(\R^{N}))$ such that, a.e. in $(x,\xi,t)$, $f(x,\xi,t)=\chi(\xi,u(x,t)).$

\end{prop}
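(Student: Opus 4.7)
The plan is to exploit the fact that a characteristic function $\chi(u,\xi)$ satisfies the pointwise identity $\chi^{2}=|\chi|$, and to prove that any generalized pathwise rough entropy/kinetic solution $f$ also satisfies this identity almost everywhere. Once $f(x,\xi,t)^{2}=|f|(x,\xi,t)$ is established, the remaining properties from Definition~\ref{def:path_e-soln-1} — namely $|f|\le 1$, $\sgn(\xi)f=|f|$, and $\partial_\xi f=\delta(\xi)-\nu$ with $\nu\ge 0$ — force, via a purely algebraic slicewise argument (essentially \cite[Lemma~2.3.4]{P02}), that $f(x,\xi,t)=\chi(u(x,t),\xi)$ for a measurable function $u(x,t)$. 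The integrability $u\in L^{\infty}([0,T];L^{1}(\R^{N}))$ is then immediate from Lemma~\ref{lem:l1-bound}, since $\|u(\cdot,t)\|_{L^{1}(\R^{N})}=\int|\chi(u(x,t),\xi)|\,dx\,d\xi=\int|f|(x,\xi,t)\,dx\,d\xi\le\|u_{0}\|_{L^{1}}$.

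To obtain $f^{2}=|f|$ a.e., I introduce the functional
$F(t):=\int\bigl(|f|(y,\eta,t)-f(y,\eta,t)^{2}\bigr)\,dy\,d\eta,$
which is nonnegative (since $|f|\le 1$ gives $f^{2}\le|f|$) and satisfies $F(0)=0$ (since $f(\cdot,\cdot,0)=\chi(u_{0}(\cdot),\cdot)$ is itself a characteristic). The goal is therefore the monotonicity estimate $F(t)\le F(s)$ for $(s,t)\in\D([0,T]\setminus\mcN)$, which combined with $F\ge 0$ and $F(0)=0$ forces $F\equiv 0$ and hence $f^{2}=|f|$ a.e.

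The monotonicity will be proved by replaying the argument of Theorem~\ref{thm:unique} with the two solutions $f^{(1)}$ and $f^{(2)}$ both taken equal to $f$. A close inspection reveals that the specific structure $\partial_{\xi'}\chi^{(i)}=\delta(\xi')-\delta(\xi'-u^{(i)})$ enters that proof only to discard the terms containing $\delta(\xi'-u^{(i)})$ using their nonnegativity; this step transports verbatim to a generalized solution, because the corresponding discarded term is instead $\nu^{(i)}\ge 0$. Carrying out the computation with $f^{(1)}=f^{(2)}=f$, $m^{(1)}=m^{(2)}=m$ and the mollified test functions $\vr_{t_{0},\ve}$ of \eqref{eq:eps_test_fct}, and combining with Lemma~\ref{lem:abs_val_part} applied to $f$, yields for the auxiliary functional
\[F_{t_{0},\ve}(t):=\int\sgn^{\ve}(\eta)(f\ast\vr_{t_{0},\ve})(y,\eta,t)\,dy\,d\eta-\int(f\ast\vr_{t_{0},\ve})(y,\eta,t)^{2}\,dy\,d\eta\]
an increment bound $F_{t_{0},\ve}(t)-F_{t_{0},\ve}(s)\le\int_{s}^{t}\bigl(Err^{(1,2)}+Err^{(1)}+Err^{(2)}\bigr)(t_{0},r,\ve)\,dr$, with the right-hand side controlled by $C(t-t_{0})^{\a}\int_{s}^{t}\int m\,dx\,d\xi\,dr$ via Lemma~\ref{lem:err_est}. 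Partitioning $[s,t]\setminus\mcN$ and sending $\|\Delta\|\to 0$ kills the $(t_{i}-t_{i-1})^{\a}$ factors exactly as at the end of the proof of Theorem~\ref{thm:unique}, and then $\ve\to 0$ combined with Lemma~\ref{lem:convergence} and $\sgn^{\ve}\to\sgn$ yields $F(t)\le F(s)$. Letting $s\to 0$ along $[0,T]\setminus\mcN$ concludes $F(t)\le 0$, and together with $F\ge 0$ gives $F\equiv 0$.

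The principal obstacle is the verification in the third paragraph that the delicate integration-by-parts/inequality chain of Theorem~\ref{thm:unique} passes through for a single generalized solution compared with itself. The crucial structural observation is that each Dirac $\delta(\xi'-u^{(i)})$ arising from $\partial_{\xi'}\chi^{(i)}$ enters only through its sign, so its replacement by the nonnegative measure $\nu$ of Definition~\ref{def:path_e-soln-1} preserves the direction of every inequality. Once this is granted, the mollification in $\ve$, partition summation, and passage to the limit become line-by-line repetitions of arguments already developed in Sections~\ref{sec:unique} and \ref{sec:existence}.
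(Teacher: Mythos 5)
Your proposal is correct and follows essentially the same route as the paper's proof: show that the defect functional $\int(|f|-f^{2})\,dy\,d\eta$ (the paper works with its negative, $\int(f^{2}-|f|)$) vanishes at $t=0$, is sign-definite by $|f|\le 1$, and is monotone in $t$ by mollifying with the test functions $\vr_{t_{0},\ve}$, using $\partial_{\xi}f=\delta(\xi)-\nu$ with $\nu\ge0$ in place of $\partial_{\xi}\chi=\delta(\xi)-\delta(\xi-u)$ to discard the nonnegative term, invoking Lemma~\ref{lem:abs_val_part} and Lemma~\ref{lem:err_est} for the increment/error bounds, and then summing over partitions with $\|\Delta\|\to0$ to eliminate the $(t_{i+1}-t_{i})^{\a}$ factors before letting $\ve\to0$ via Lemma~\ref{lem:convergence}. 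The observation that the Dirac term $\delta(\xi'-u^{(i)})$ in the uniqueness proof enters only through its nonnegativity, and hence can be replaced by $\nu\ge0$, is exactly the mechanism the paper exploits.
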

\begin{proof}
The claim follows, if we show that, for a given generalized pathwise rough entropy/kinetic
solution, there exists a null set  $\mcN\subseteq[0,T]$ with $0\not\in\mcN$ such that, for all $t \in [0,T]\setminus \mcN$,
\begin{equation}\label{takis}
\int (f^{2}(y,\eta,t)-|f|(y,\eta,t))dyd\eta =0, 
\end{equation}
which, in view of  \eqref{eq:gen_kinetic_measure},  implies that $f=\chi(u)$ for some measurable $u:[0,T]\times\R^{N}\to\R$. 

\no Since
$$\|u(\cdot, t)\|_{L^{1}(\R^{N})} =\int|\chi|(\xi,u(x,t))d\xi dx =\int|f|(x,\xi,t)d\xi dx =\|f(\cdot, t)\|_{L^{1}(\R^{N}\times\R)},$$ 
it then follows that $u\in L^{\infty}([0,T];L^{1}(\R^{N})).$

\no In view of the obvious inequality $\int (f^{2}(y,\eta,t)-|f|(y,\eta,t))dyd\eta \leq 0$, to prove \eqref{takis} we have to show that 
\begin{equation}
\int (f^{2}(y,\eta,t)-|f|(y,\eta,t))dyd\eta \geq 0. \label{eq:diff}
\end{equation}

\no  Let $\vr_{t_{0},\ve}$ be as in \eqref{eq:eps_test_fct}. From Lemma \ref{lem:convergence} we know that there exists  a null $\mcN\subseteq[0,T]$ with $0\not\in\mcN$ such that, for all $t\in[0,T]\setminus\mcN$, as  $\ve\to0$,
\begin{align*}
f\ast\vr_{t_{0},\ve}(y,\eta,t) & \to f(Y_{(t_{0},y,\eta)}(t),\z_{(t_{0},y,\eta)}(t),t),
\end{align*}
and, hence, for all $t\in[0,T]\setminus\mcN$, 
\begin{align*}
  \int (f^{2}(y,\eta,t)-|f|(y,\eta,t))dyd\eta \
= & \int f^{2}(Y_{(t_{0},y,\eta)}(t),\z_{(t_{0},y,\eta)}(t),t)dyd\eta-\int\sgn(\eta)f(y,\eta,t)dyd\eta\\
= & \lim_{\ve\to0}\int(f\ast\vr_{t_{0},\ve})^{2}(y,\eta,t)dyd\eta-\int\sgn(\eta)f(y,\eta,t)dyd\eta.
\end{align*}
\no Thus to show  \eqref{eq:diff} it suffices to obtain an appropriate lower bound for 
\begin{equation}\label{takis1}
\int(f\ast\vr_{t_{0},\ve})^{2}(y,\eta,t)dyd\eta-\int\sgn(\eta)f(y,\eta,t)dyd\eta.
\end{equation}

\no To estimate the first term we note, that, for $t \in (t_0,T)$,
\begin{align*}
 \partial_{t}\int(f\ast\vr_{t_{0},\ve})^{2}dyd\eta
= & 2\int(f\ast\vr_{t_{0},\ve})\partial_{t}(f\ast\vr_{t_{0},\ve})dyd\eta\\
= & -2\int(f\ast\vr_{t_{0},\ve})(y,\eta,t)\partial_{\xi}\vr_{t_{0},\ve}(x,y,\xi,\eta,t)m(x,\xi,t)dxd\xi dyd\eta\\
= & -2\int f(x',\xi',t)\vr_{t_{0},\ve}(x',y,\xi',\eta,t)\partial_{\xi}\vr_{t_{0},\ve}(x,y,\xi,\eta,t)m(x,\xi,t)dx'd\xi'dxd\xi dyd\eta\\
= & 2\int f(x',\xi',t)\partial_{\xi'}\vr_{t_{0},\ve}(x',y,\xi',\eta,t)\vr_{t_{0},\ve}(x,y,\xi,\eta,t)m(x,\xi,t)dx'd\xi'dxd\xi dyd\eta\\
 & +Err^{(1)}(t_{0},t,\ve), 
\end{align*}
where
\begin{align*}
Err^{(1)}(t_{0},t,\ve)= & -2\int f(x',\xi',t)m(x,\xi,t)[\vr_{t_{0},\ve}(x',y,\xi',\eta,t)\partial_{\xi}\vr_{t_{0},\ve}(x,y,\xi,\eta,t)\\
 & +\partial_{\xi'}\vr_{t_{0},\ve}(x',y,\xi',\eta,t)\vr_{t_{0},\ve}(x,y,\xi,\eta,t)]dx'd\xi'dxd\xi dyd\eta.
\end{align*}
Now
\begin{align*}
 & 2\int f(x',\xi',t)\partial_{\xi'}\vr_{\ve}(x',y,\xi',\eta,t)\vr_{\ve}(x,y,\xi,\eta,t)m(x,\xi,t)dx'd\xi'dxd\xi dyd\eta\\
 & =-2\int(\d(\xi')-\nu(x',\xi',t))\vr_{\ve}(x',y,\xi',\eta,t)\vr_{\ve}(x,y,\xi,\eta,t)m(x,\xi,t)dx'd\xi'dxd\xi dyd\eta\\
 & =-2\int\vr_{\ve}(x',y,0,\eta,t)\vr_{\ve}(x,y,\xi,\eta,t)m(x,\xi,t)dx'dxd\xi dyd\eta\\
 & +2\int\nu(x',\xi',t)\vr_{\ve}(x',y,\xi',\eta,t)\vr_{\ve}(x,y,\xi,\eta,t)m(x,\xi,t)dx'd\xi'dxd\xi dyd\eta\\
 & \ge-2\int\vr_{\ve}(x',y,0,\eta,t)\vr_{\ve}(x,y,\xi,\eta,t)m(x,\xi,t)dx'dxd\xi dyd\eta.
\end{align*}
In view of  Lemma \ref{lem:abs_val_part}, there exists  a null  $\mcN\subseteq[0,T]$ with $0\not\in\mcN$ such that, for every $t_{0}\ge0$ and all $(s,t)\in\D\left([t_{0},T]\setminus\mcN\right),$
\begin{align*}
 & 2\int_{s}^{t}\int\vr_{t_{0},\ve}(x',y,0,\eta,r)\vr_{t_{0}}(x,y,\xi,\eta,r)m(x,\xi,r)dxd\xi dx'dyd\eta dr\\
 & =-\int\sgn^{\ve}(\eta)((f\ast\vr_{t_{0},\ve})(y,\eta,t) - (f\ast\vr_{t_{0},\ve})(y,\eta,s))dyd\eta+\int_{s}^{t}Err^{(2)}(t_{0},r,\ve)dr,
\end{align*}
with 
\begin{align*}
Err^{(2)}(t_{0},t,\ve)=\int m(x,\xi,t)\sgn(\xi')( & \partial_{\xi'}\vr_{t_{0},\ve}(x',y,\xi',\eta,t)\vr_{t_{0},\ve}(x,y,\xi,\eta,t)\\
 & +\vr_{t_{0},\ve}(x',y,\xi',\eta,t)\partial_{\xi}\vr_{t_{0},\ve}(x,y,\xi,\eta,t))dyd\eta dx'd\xi'dxd\xi.
\end{align*}
Hence, for all $t_{0}\ge0$ and all $s,t\in\D\left([t_{0},T]\setminus\mcN\right),$
\begin{align*}
&\int(f \ast\vr_{t_{0},\ve})^{2}(y,\eta,t)dyd\eta - \int(f\ast\vr_{t_{0},\ve})^{2}(y,\eta,s)dyd\eta 
\\\ge& \int\sgn^{\ve}(\eta)[(f\ast\vr_{t_{0},\ve})(y,\eta,t) - (f\ast\vr_{t_{0},\ve})(y,\eta,s)]dyd\eta
  +\int_{s}^{t}[Err^{(1)}(t_{0},r,\ve)-Err^{(2)}(t_{0},r,\ve)]dr.
\end{align*}
It follows from  Lemma \ref{lem:err_est}  that 
\begin{align*}
 & 
\int_{s}^{t}[Err^{(1)}(t_{0},r,\ve)-Err^{(2)}(t_{0},r,\ve)]dr\\
= & -2\int(f(x',\xi',t)+\sgn(\xi'))m(x,\xi,t)[\vr_{t_{0},\ve}(x',y,\xi',\eta,t)\partial_{\xi}\vr_{t_{0},\ve}(x,y,\xi,\eta,t)\\
 & +\partial_{\xi'}\vr_{t_{0},\ve}(x',y,\xi',\eta,t)\vr_{t_{0},\ve}(x,y,\xi,\eta,t)]dx'd\xi'dxd\xi dyd\eta\\
\ge & -4\int_{s}^{t}\int m(x,\xi,r)\Big(\int \Big|\int[\vr_{t_{0},\ve}(x',y,\xi',\eta,r)\partial_{\xi}\vr_{t_{0},\ve}(x,y,\xi,\eta,r)\\
 & +\partial_{\xi'}\vr_{t_{0},\ve}(x',y,\xi',\eta,r)\vr_{t_{0},\ve}(x,y,\xi,\eta,r)]dyd\eta \Big|dx'd\xi'\Big)dxd\xi dr\\
\ge & -C(t-t_{0})^{\a}\int_{s}^{t}\int m(x,\xi,r)dxd\xi dr,
\end{align*}
and, thus, 
\begin{align*}
&\int[(f\ast\vr_{t_{0},\ve})^{2}(y,\eta,t) - (f\ast\vr_{t_{0},\ve})^{2}(y,\eta,s)]dyd\eta \\
&\ge  \int\sgn^{\ve}(\eta)[(f\ast\vr_{t_{0},\ve})(y,\eta,t) - f\ast\vr_{t_{0},\ve})(y,\eta,s)]dyd\eta-C(t-t_{0})^{\a}\int_{s}^{t}\int m(x,\xi,r)dxd\xi dr.
\end{align*}
Letting $\ve\to0$ and using \eqref{eq:gen_kinetic_measure} yields, for all $t_{0}\ge0$ and $(s,t)\in\D\left([t_{0},T]\setminus\mcN\right)$,
\begin{align*}
\int [f^{2}(y,\eta,t)-|f|(y,\eta,t)|]dyd\eta \ge & \int[f^{2}(y,\eta,s)-|f|(y,\eta,s)|]dyd\eta-C(t-t_{0})^{\a}\int_{s}^{t}\int m(x,\xi,r)dxd\xi dr.
\end{align*}

\no Fix now $t_{0}\ge0$ and  $(s,t)\in\D\left([t_{0},T]\setminus\mcN\right)$. For every partition $\D=\{s=t_{0}\le\dots\le t_{m}=t\}\subseteq[t_{0},T]\setminus\mcN$ we have 
\begin{align*}
 & \int [f^{2}(x,\xi,t)-|f|(x,\xi,t)]dxd\xi-\int [f^{2}(x,\xi,s)-|f|(x,\xi,s)]dxd\xi\\
= & \sum_{i=1}^{N}\int[ f^{2}(x,\xi,t_{i+1})-|f|(x,\xi,t_{i+1})]dxd\xi-\int [f^{2}(x,\xi,t_{i})-|f|(x,\xi,t_{i})]dxd\xi\\
\ge & -C\sum_{i=1}^{N}(t_{i+1}-t_{i})^{\a}\int_{t_{i}}^{t_{i+1}}\int m(x,\xi,r)dxd\xi dr\\
\ge & -\|\D\|^{\a}C\int_{s}^{t}\int m(x,\xi,r)dxd\xi dr.
\end{align*}
Letting $\|\D\|\to0$ yields, for all $t_{0}\ge0$ and $(s,t)\in\D\left([t_{0},T]\setminus\mcN\right)$,
\begin{align*}
\int [f^{2}(x,\xi,t)-|f|(x,\xi,t)]dxd\xi & \ge\int [f^{2}(x,\xi,s)-|f|(x,\xi,s)]dxd\xi,
\end{align*}
and, since $0\not\in\mcN$ and $\int (f^2(x,\xi,0) -|f|(x,\xi,0)) dx d\xi=0$,  for all $t\in[0,T]\setminus\mcN$ we conclude that \eqref{takis} holds. 
\end{proof}

\subsection*{Proof of Theorem~\ref{thm:existence}} The proof of Theorem \ref{thm:existence} now becomes a simple consequence of all the previous analysis. Indeed  Theorem \ref{thm:generalized_existence} yields a generalized pathwise rough entropy/kinetic  solution $f$ to \eqref{eq:scl} and Proposition \ref{prop:reconstruction} implies the existence of some  $u\in L^\infty([0,T];L^1(\R^N))$ such that 
  $f(x,\xi,t)=\chi(u(t,x),\xi).$ It then follows that  $u\in L^\infty([0,T];L^1(\R^N))$ is 
 a pathwise rough entropy/kinetic solution to \eqref{eq:scl}.



\appendix

\section{Definitions and some estimates from rough paths theory}\label{app:RP}

\no We briefly recall the elements of rough paths theory used in this paper and for more details we refer to \cite{FV10}. As already mentioned in the introduction, we use the theory of rough paths to define a topology which is  stronger than the usual sup-norm on the space of driving signals. In the language of the rough path theory ordinary differential equations (ODE for short) are called controlled ODE and the signals are the controls.  It is not difficult to construct examples showing that controlled ODE do not, in general, depend continuously on the path of the signal and that is necessary to use higher order iterated integrals of the controls. 
\smallskip

\no This motivates the definition, for $x\in C^{1-\text{var}}([0,T];\R^N)$,  of the step $M$ signature
 $$S_M(x)_{0,T}=\left(1,\int_{0<u<T}dx_u,\dots,\int_{0<u_1 < \dots < u_M <T}dx_{u_1}\otimes \dots \otimes dx_{u_M}\right)$$
We note that $S_M(x)$ takes values in the so-called truncated step-$M$ tensor algebra
 $$T^{M}(\R^{N})=\R\oplus\R^{N}\oplus(\R^{N}\otimes\R^{N})\oplus\ldots\oplus(\R^{N})^{\otimes M}.$$
In fact, $S_M(x)$ takes values in the smaller set $G^{M}(\R^{N})\subset T^{M}(\R^{N})$ given by
 %
  $$G^{M}(\R^{N}):=\left\{ S_M(x)_{0,1} :\ x\in C^{1-\text{var}}([0,1];\R^N) \right\}.$$
On $G^{M}(\R^{N})$ we  introduce the Carnot-Caratheodory norm
  $$ \|g\| := \inf\left\{\ \int_0^1 |d\g|\ : \gamma \in C^{1-\text{var}}([0,1];\R^N) \text{ and } S_M(\g)_{0,1}=g \right\},$$
which gives rise to a so-called homogeneous metric on $G^{M}(\R^{N})$. Alternatively, for any $g\in T^{M}(\R^{N})$, we may set
  $$ |g|=|g|_{T^{M}(\R^{N})} = \max_{k=1\dots M}|\pi_k (g)|, $$
where $\pi_k$ is the projection of $g$ onto the $k$-th tensor level. This defines an inhomogeneous metric on $G^{M}(\R^{N})$. It turns out that the topologies induced by $\|\cdot\|$ and $|\cdot|$ are actually equivalent. 
\smallskip

\no For paths in $T^{M}(\R^{N})$ starting at the fixed point $e:=1+0+\ldots+0$ and $\beta\in(0,1]$,  one may then define $\beta$-Hölder metrics, extending the usual metrics for paths in $\R^{N}$ starting at zero.  The homogeneous $\beta$-Hölder metric is denoted by $\ensuremath{d_{\beta-\text{Höl}}}$ and the inhomogeneous one by $\ensuremath{\rho_{\beta-\text{Höl}}}$. A corresponding norm is defined by 
$\ensuremath{\|\cdot\|_{\beta-\text{Höl}}=d_{\beta-\text{Höl}}(\cdot,0)}$ where $0$ denotes the constant $e$-valued path.
\smallskip

\no A geometric $\beta$-Hölder rough path ${x}$ is a path in $T^{\lfloor1/\beta\rfloor}(\R^{N})$ which can be approximated by lifts of smooth paths in the $\ensuremath{d_{\beta-\text{Höl}}}$ metric. It can be shown that rough paths actually take values in  $G^{\lfloor1/\beta\rfloor}(\R^{N})$. We denote by 
  $C^{0,\beta-\text{H\"ol}}_0([0,T],G^{\lfloor 1/\beta \rfloor}(\R^N))$
the space of geometric  $\beta$-Hölder rough paths.
\smallskip

\no We next recall some basic stability estimates for solutions to rough differential equations (RDE for short). For definiteness we consider RDE   of the form
\[
dx=V(x)\circ d{z},
\]
where ${z}$ is a geometric $\a$-Hölder rough path. It is well known (see, for example,  \cite{FV10}) that the RDE above has  a flow $\psi^{{z}}$ of solutions.
The following is taken from \cite[Lemma 13]{CDFO13}.

\begin{lem}
\label{lem:hoelder_rp}Let $\a\in(0,1)$, $\g>\frac{1}{\a}\ge1$, $k\in\N$ and assume that 
$V  \in \Lip^{\g+k}(\R^{N};\R^{N}).$
For all $R>0$ there exist $C=C(R,\|V \|_{\Lip^{\g+k}})$
 and $K=K(R,\|V\|_{\Lip^{\g+k}})$, which  are non-decreasing in all arguments,  such that,  
for all geometric $\a$-Hölder rough paths ${z}^{1},{z}^{2}\in C_{0}^{\a-\Hoel}([0,T];G^{[\frac{1}{\a}]}(\R^{N}))$ with $\|{z}^{1}\|_{\a-\Hoel;[0,T]}$, $\|{z}^{2}\|_{\a-\Hoel;[0,T]}\le R$ and  all $n\in\{0,\dots,k\}$, 
\begin{align*}
\sup_{x\in\R^{N}}\|D^{n}(\psi^{{z}^{1}}-\psi^{{z}^{2}})(x)\|_{\a-\Hoel;[0,T]} & \le C\rho_{\a-\Hoel;[0,T]}({z}^{1},{z}^{2}),\\
\sup_{x\in\R^{N}}\|D^{n}((\psi^{{z}^{1}})^{-1}-(\psi^{{z}^{2}})^{-1})(x)\|_{\a-\Hoel;[0,T]} & \le C\rho_{\a-\Hoel;[0,T]}({z}^{1},{z}^{2})
\end{align*}
and, for all $n\in\{1,\dots,k\}$, 
\begin{align*}
\sup_{x\in\R^{N}}\|D^{n}\psi^{{z}^{1}}(x)\|_{\a-\Hoel;[0,T]}  \le K \  \text{ and} \
\sup_{x\in\R^{N}}\|D^{n}(\psi^{{z}^{1}})^{-1}(x)\|_{\a-\Hoel;[0,T]}  \le K.
\end{align*}
\end{lem}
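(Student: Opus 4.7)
The plan is to view the derivatives $(D^n\psi^{z})_{n=0,\dots,k}$ jointly as components of the flow of an extended rough differential equation, and then invoke the standard a priori and continuity estimates for RDE flows with a $\Lip^{\g}$ vector field.

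Fix $x\in\R^{N}$ and differentiate the RDE $d\psi=V(\psi)\circ d{z}$ formally in the initial condition. For each $n\in\{1,\dots,k\}$, the $n$-th derivative $J^{(n)}:=D^{n}\psi^{z}(x)$ satisfies a coupled rough equation of the form
\[
dJ^{(n)}=F_{n}\bigl(\psi,J^{(1)},\dots,J^{(n)}\bigr)\circ d{z},
\]
where $F_{n}$ is a polynomial in $J^{(1)},\dots,J^{(n)}$ whose coefficients are of the form $(D^{j}V)(\psi)$ with $j\le n\le k$. Collecting the equations for $n=0,\dots,k$ yields a single RDE
\[
dY=\widetilde{V}(Y)\circ d{z}
\]
on an extended Euclidean space, whose vector field $\widetilde V$ is built out of $V,DV,\dots,D^{k}V$. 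Since $V\in\Lip^{\g+k}$, the extended vector field $\widetilde V$ is of class $\Lip^{\g}$, with norm controlled by $\|V\|_{\Lip^{\g+k}}$.

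The uniform bounds for $n\in\{1,\dots,k\}$ then follow from the standard a priori estimate for RDE flows driven by $\a$-H\"older rough paths with $\Lip^{\g}$ vector field (e.g.\ \cite[Theorem 10.14]{FV10}): whenever $\|z\|_{\a-\Hoel;[0,T]}\le R$, the $\a$-H\"older norm of each $J^{(n)}$ is bounded by some $K=K(R,\|V\|_{\Lip^{\g+k}})$, uniformly in $x$. Uniformity in $x$ for $n\ge1$ is automatic: although $\psi^{z}(x)$ itself grows with $x$, the driving coefficients for $J^{(n)}$ involve only $D^{j}V(\psi)$, which are globally bounded. For the continuity estimate one applies the local Lipschitz continuity of the Ito-Lyons map on bounded sets of drivers (e.g.\ \cite[Theorem 10.26]{FV10}) to the extended RDE. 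Since $\widetilde V$ does not depend on the driver, this yields
\[
\sup_{x\in\R^{N}}\|Y^{z^{1}}(x)-Y^{z^{2}}(x)\|_{\a-\Hoel;[0,T]}\le C\,\rho_{\a-\Hoel;[0,T]}(z^{1},z^{2}),
\]
with $C=C(R,\|V\|_{\Lip^{\g+k}})$ non-decreasing in its arguments; projecting onto the $D^{n}\psi^{z}$ components gives the first estimate.

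For the inverse flow $(\psi^{z})^{-1}$, identify it with the time-$T$ flow of $d\widehat\psi=-V(\widehat\psi)\circ d\widehat z$ driven by the time-reversed rough path $\widehat z(t):=z(T-t)$. Since $\a$-H\"older norms and $\rho_{\a-\Hoel}$-distances are preserved under time reversal and $-V$ has the same $\Lip^{\g+k}$ norm as $V$, the same argument yields the bounds on $D^{n}(\psi^{z})^{-1}$. The main technical point is the verification that $\widetilde V$ belongs to $\Lip^{\g}$ with the correct norm; this is exactly where the hypothesis $V\in\Lip^{\g+k}$ is needed rather than the weaker $V\in\Lip^{\g}$, because extracting the coefficients appearing in the equation for $J^{(n)}$ costs $n$ orders of regularity. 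Once this bookkeeping is done, every other step is an immediate application of the black-box RDE estimates of \cite{FV10}.
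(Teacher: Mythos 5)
The paper does not actually prove this lemma; it is quoted verbatim from \cite[Lemma 13]{CDFO13}, so there is no ``paper proof'' to compare against. Your sketch follows the route one would expect that reference (and the surrounding literature, e.g.\ \cite[Ch.~10--11]{FV10}) to take: pass to the variation system for $(D^n\psi^z)_{n\le k}$, invoke a priori RDE estimates, and handle the inverse flow by time reversal. That overall architecture is sound.

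There is, however, one step that does not work as written. You assert that the extended vector field $\widetilde V$ governing $Y=(\psi,J^{(1)},\dots,J^{(k)})$ ``is of class $\Lip^{\g}$, with norm controlled by $\|V\|_{\Lip^{\g+k}}$,'' and then apply \cite[Theorems~10.14, 10.26]{FV10} as black boxes. This is false: the equations for $J^{(n)}$ have right-hand sides that are polynomial in $J^{(1)},\dots,J^{(n)}$ (for instance $dJ^{(2)}=\big[D^2V(\psi)(J^{(1)},J^{(1)})+DV(\psi)J^{(2)}\big]\circ dz$), so $\widetilde V$ and its derivatives grow without bound in those variables. The global $\Lip^{\g}$ hypothesis required by the cited theorems fails, and the constants you would obtain are not uniform in the (a priori unbounded) $J$-variables. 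The standard repair is to avoid the monolithic extended system and instead argue inductively in $n$: the equation for each $J^{(n)}$ is \emph{linear} in $J^{(n)}$ with bounded coefficients $D^jV(\psi)$, plus a forcing term depending polynomially on the already-controlled $J^{(1)},\dots,J^{(n-1)}$. One then uses the RDE estimates for linear (or linear-growth) equations, which give bounds depending on $R$ (typically exponentially in $R^{1/\a}$) but uniform in $x$. The continuity estimate in $\rho_{\a\text{-H\"ol}}$ is obtained at each stage of the induction from the corresponding local Lipschitz continuity of the It\^o--Lyons map for linear RDEs. The time-reversal reduction for $(\psi^z)^{-1}$ and the observation that uniformity in $x$ comes from the global boundedness of $D^jV$ are fine.

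In short: the strategy is correct, but ``apply Theorem~10.14 to $\widetilde V$'' is not a valid step, and this is precisely where the real work lies; the inductive linear-RDE argument (or a linear-growth variant of the black-box theorem) is needed to close the proof.
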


\section{Convolution error estimates}

\no Now we present the proof of the error estimate.
\begin{lem}
\label{lem:err_est}Let $\vr_{t_{0},\ve}$ be as in \eqref{eq:eps_test_fct}. For each $R>0$ there exists $C=C(R)>0$ such that,\\
 for all $t\in [0,T]$ and all $\a$-Hölder rough paths ${z}$ with $\|{z}\|_{\a-\Hoel;[0,T]}\le R$,
\begin{align*}
\sup_{(x,\xi)\in \R^{N+1}}\int & \Big|\int\vr_{t_{0},\ve}(x',y,\xi',\eta,r)\partial_{\xi}\vr_{t_{0},\ve}(x,y,\xi,\eta,r)\\
 & +\partial_{\xi'}\vr_{t_{0},\ve}(x',y,\xi',\eta,r)\vr_{t_{0},\ve}(x,y,\xi,\eta,r)dyd\eta\Big|dx'd\xi'\le C(r-t_{0})^{\a}.
\end{align*}
\end{lem}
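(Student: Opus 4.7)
The plan is to exploit a commutator-type cancellation that reduces the inner integral to one featuring only the \emph{differences} of the $\xi$-derivatives of the characteristic flow, which are small by Lemma~\ref{lem:hoelder_rp}. Writing $X:=X_{(r,x,\xi)}(r-t_{0})$, $\Xi:=\Xi_{(r,x,\xi)}(r-t_{0})$ (and $X',\Xi'$ for the primed base point), we have $\vr_{t_{0},\ve}(x,y,\xi,\eta,r)=\vr_{\ve}^{s}(X-y)\vr_{\ve}^{v}(\Xi-\eta)$, and the chain rule gives
\begin{equation*}
\partial_{\xi}\vr_{t_{0},\ve}=-\partial_{\xi}X\cdot\nabla_{y}\vr_{t_{0},\ve}-\partial_{\xi}\Xi\,\partial_{\eta}\vr_{t_{0},\ve},
\end{equation*}
and the analogous identity for the primed version. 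Abbreviating $\vr:=\vr_{t_{0},\ve}(x,y,\xi,\eta,r)$, $\vr':=\vr_{t_{0},\ve}(x',y,\xi',\eta,r)$ and adding and subtracting the primed coefficients, one obtains the decomposition
\begin{equation*}
\vr'\partial_{\xi}\vr+(\partial_{\xi'}\vr')\vr=-\partial_{\xi'}X'\cdot\nabla_{y}(\vr'\vr)-\partial_{\xi'}\Xi'\,\partial_{\eta}(\vr'\vr)-(\partial_{\xi}X-\partial_{\xi'}X')\cdot\vr'\nabla_{y}\vr-(\partial_{\xi}\Xi-\partial_{\xi'}\Xi')\vr'\partial_{\eta}\vr.
\end{equation*}
Since $\partial_{\xi'}X'$ and $\partial_{\xi'}\Xi'$ are independent of $(y,\eta)$, the first two terms are total divergences in $(y,\eta)$ and vanish after integration.

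Setting $\psi_{\ve}^{s}:=\vr_{\ve}^{s}\ast\vr_{\ve}^{s}$ and $\psi_{\ve}^{v}:=\vr_{\ve}^{v}\ast\vr_{\ve}^{v}$, both even, a direct computation using one integration by parts in $y$ and in $\eta$ respectively yields
\begin{equation*}
\int\vr'\nabla_{y}\vr\,dyd\eta=-\nabla\psi_{\ve}^{s}(X-X')\psi_{\ve}^{v}(\Xi-\Xi'),\qquad\int\vr'\partial_{\eta}\vr\,dyd\eta=-\psi_{\ve}^{s}(X-X')(\psi_{\ve}^{v})'(\Xi-\Xi'),
\end{equation*}
so that the inner integral collapses to
\begin{equation*}
\int\bigl[\vr'\partial_{\xi}\vr+(\partial_{\xi'}\vr')\vr\bigr]dyd\eta=(\partial_{\xi}X-\partial_{\xi'}X')\cdot\nabla\psi_{\ve}^{s}(X-X')\psi_{\ve}^{v}(\Xi-\Xi')+(\partial_{\xi}\Xi-\partial_{\xi'}\Xi')\psi_{\ve}^{s}(X-X')(\psi_{\ve}^{v})'(\Xi-\Xi').
\end{equation*}

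To control the coefficient differences, I would apply Lemma~\ref{lem:hoelder_rp} with $k=2$ (permitted by \eqref{eq:regularity_assumptions}) to both the backward flow $(X,\Xi)$ and its first $(x,\xi)$-derivatives. Because at $r=t_{0}$ the flow is the identity, so $\partial_{\xi}X\equiv0$, $\partial_{\xi}\Xi\equiv1$ and $D^{2}_{x,\xi}(X,\Xi)\equiv0$, the $\alpha$-H\"older-in-time bounds produce some $C=C(R)$ with
\begin{equation*}
\|\partial_{\xi}X\|_{\infty}+\|\partial_{\xi}\Xi-1\|_{\infty}+\|D_{x,\xi}\partial_{\xi}X\|_{\infty}+\|D_{x,\xi}\partial_{\xi}\Xi\|_{\infty}\le C(r-t_{0})^{\a},
\end{equation*}
and in particular the Lipschitz moduli $|\partial_{\xi}X(x,\xi)-\partial_{\xi}X(x',\xi')|+|\partial_{\xi}\Xi(x,\xi)-\partial_{\xi}\Xi(x',\xi')|\le C(r-t_{0})^{\a}(|x-x'|+|\xi-\xi'|)$.

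Finally I would change variables $(x',\xi')\mapsto(X',\Xi')$ in the outer integral; the Jacobian and the Lipschitz constant of the forward flow are both bounded by $C(R)$ (Lemma~\ref{lem:hoelder_rp} with $n=1$), so $|x-x'|+|\xi-\xi'|\le C(R)(|X-X'|+|\Xi-\Xi'|)$. Combined with the Lipschitz bound of the previous paragraph, the estimate reduces to
\begin{equation*}
\int(|X-X'|+|\Xi-\Xi'|)\Bigl[|\nabla\psi_{\ve}^{s}(X-X')|\psi_{\ve}^{v}(\Xi-\Xi')+\psi_{\ve}^{s}(X-X')|(\psi_{\ve}^{v})'(\Xi-\Xi')|\Bigr]dX'd\Xi'\le C,
\end{equation*}
which follows by Fubini from the $\ve$-independent scaling identities $\int|u||\nabla\psi_{\ve}^{s}(u)|du\le C$, $\int|v||(\psi_{\ve}^{v})'(v)|dv\le C$ together with $\int\psi_{\ve}^{s,v}=1$ (using that $\nabla\psi_{\ve}^{s}$ is supported in $\{|u|\le2\ve\}$ with $L^{1}$-norm $O(\ve^{-1})$, and similarly for $(\psi_{\ve}^{v})'$). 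The main obstacle is the decomposition of the first paragraph: a naive pointwise bound on the inner integral produces a factor $\|\nabla\psi_{\ve}^{s}\|_{L^{1}}=O(\ve^{-1})$ which diverges as the mollifier concentrates, and it is precisely the appearance of the \emph{differences} $\partial_{\xi}X-\partial_{\xi'}X'$ and $\partial_{\xi}\Xi-\partial_{\xi'}\Xi'$ --- produced by the commutator-type algebra --- that, paired with the first-moment bound on the kernel, cancels the $\ve^{-1}$ singularity and delivers the uniform-in-$\ve$ bound $C(r-t_{0})^{\a}$.
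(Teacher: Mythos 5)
Your proof is correct and takes essentially the same approach as the paper: you extract the differences $\partial_\xi X-\partial_{\xi'}X'$ and $\partial_\xi\Xi-\partial_{\xi'}\Xi'$, control them by $C(r-t_0)^{\a}(|x-x'|+|\xi-\xi'|)$ via Lemma~\ref{lem:hoelder_rp}, and cancel the resulting $\ve$-smallness against the $\ve^{-1}$ produced by the derivative of the mollifier. The bookkeeping at the end differs slightly --- the paper integrates by parts in $y$, restricts to the support set where $|X-X'|,|\Xi-\Xi'|\le K\ve$, and uses $\|\partial_y\vr_\ve^s\|_{L^1}\le K/\ve$, while you evaluate the inner integral as the convolved kernel $\psi_\ve=\vr_\ve\ast\vr_\ve$, change variables in $(x',\xi')$, and invoke first-moment bounds --- but the computation is the same in substance.
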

\begin{proof}
In what follows for simplicity we write $X_{(x',\xi')}$ for $X_{(r,x',\xi')}(r-t_{0})$. We have:
\begin{align}
 & \vr_{t_{0},\ve}(x',y,\xi',\eta,r)\partial_{\xi}\vr_{t_{0},\ve}(x,y,\xi,\eta,r)+\partial_{\xi'}\vr_{t_{0},\ve}(x',y,\xi',\eta,r)\vr_{t_{0},\ve}(x,y,\xi,\eta,r)\nonumber \\[.5mm]
 & =\vr_{\ve}^{s}(X_{(x',\xi')}-y)\vr_{\ve}^{v}(\Xi_{(x',\xi')}-\eta)\partial_{\xi}[\vr_{\ve}^{s}(X_{(x,\xi)}-y)\vr_{\ve}^{v}(\Xi_{(x,\xi)}-\eta)]\nonumber \\[.5mm]
 & +\partial_{\xi'}[\vr_{\ve}^{s}(X_{(x',\xi')}-y)\vr_{\ve}^{v}(\Xi_{(x',\xi')}-\eta)]\vr_{\ve}^{s}(X_{(x,\xi)}-y)\vr_{\ve}^{v}(\Xi_{(x,\xi)}-\eta)\nonumber \\[.5mm]
 & =-\vr_{\ve}^{s}(X_{(x',\xi')}-y)\vr_{\ve}^{v}(\Xi_{(x',\xi')}-\eta)\partial_{\xi}X_{(x,\xi)}\partial_{y}\vr_{\ve}^{s}(X_{(x,\xi)}-y)\vr_{\ve}^{v}(\Xi_{(x,\xi)}-\eta)\label{eq:err_est_1-1}\\[.5mm]
 & -\partial_{\xi'}X_{(x',\xi')}\partial_{y}\vr_{\ve}^{s}(X_{(x',\xi')}-y)\vr_{\ve}^{v}(\Xi_{(x',\xi')}-\eta)\vr_{\ve}^{s}(X_{(x,\xi)}-y)\vr_{\ve}^{v}(\Xi_{(x,\xi)}-\eta)\nonumber \\[.5mm]
 & -\vr_{\ve}^{s}(X_{(x',\xi')}-y)\vr_{\ve}^{v}(\Xi_{(x',\xi')}-\eta)\vr_{\ve}^{s}(X_{(x,\xi)}-y)\partial _{\xi}\Xi_{(x,\xi)}\partial_{\eta}\vr_{\ve}^{v}(\Xi_{(x,\xi)}-\eta)\nonumber \\[.5mm]
 & -\vr_{\ve}^{s}(X_{(x',\xi')}-y)\partial_{\xi'}\Xi_{(x',\xi')}\partial_{\eta}\vr_{\ve}^{v}(\Xi_{(x',\xi')}-\eta)\vr_{\ve}^{s}(X_{(x,\xi)}-y)\vr_{\ve}^{v}(\Xi_{(x,\xi)}-\eta).\nonumber 
\end{align}
We next show the details  for the first two terms, since the second two terms are treated similarly. An integration by parts yields
\begin{align*}
 & -\int\vr_{\ve}^{s}(X_{(x',\xi')}-y)\vr_{\ve}^{v}(\Xi_{(x',\xi')}-\eta)\partial_\xi X_{(x,\xi)}\partial_{y}\vr_{\ve}^{s}(X_{(x,\xi)}-y)\vr_{\ve}^{v}(\Xi_{(x,\xi)}-\eta)dy\\
 & =\int\partial_{y}\vr_{\ve}^{s}(X_{(x',\xi')}-y)\vr_{\ve}^{v}(\Xi_{(x',\xi')}-\eta)\partial _\xi X_{(x,\xi)}\vr_{\ve}^{s}(X_{(x,\xi)}-y)\vr_{\ve}^{v}(\Xi_{(x,\xi)}-\eta)dy
\end{align*}
and thus
\begin{align*}
 & \int[-\vr_{\ve}^{s}(X_{(x',\xi')}-y)\vr_{\ve}^{v}(\Xi_{(x',\xi')}-\eta)\partial _{\xi}X_{(x,\xi)}\partial_{y}\vr_{\ve}^{s}(X_{(x,\xi)}-y)\vr_{\ve}^{v}(\Xi_{(x,\xi)}-\eta)\\
 & -\partial_{\xi'}X_{(x',\xi')}\partial_{y}\vr_{\ve}^{s}(X_{(x',\xi')}-y)\vr_{\ve}^{v}(\Xi_{(x',\xi')}-\eta)\vr_{\ve}^{s}(X_{(x,\xi)}-y)\vr_{\ve}^{v}(\Xi_{(x,\xi)}-\eta)]dyd\eta\\
= & \int\partial_{y}\vr_{\ve}^{s}(X_{(x',\xi')}-y)\vr_{\ve}^{s}(X_{(x,\xi)}-y)\vr_{\ve}^{v}(\Xi_{(x',\xi')}-\eta)\vr_{\ve}^{v}(\Xi_{(x,\xi)}-\eta)dyd\eta \ [\partial_{\xi}X_{(x,\xi)}-\partial_{\xi'}X_{(x',\xi')}]\\
= & -\int[\vr_{\ve}^{s}(X_{(x',\xi')}-y)\partial_{y}\vr_{\ve}^{s}(X_{(x,\xi)}-y)\vr_{\ve}^{v}(\Xi_{(x',\xi')}-\eta)\vr_{\ve}^{v}(\Xi_{(x,\xi)}-\eta)]dyd\eta \ [\partial_{\xi}X_{(x,\xi)}-\partial_{\xi'}X_{(x',\xi')}].
\end{align*}

\no Fix some $K>0$ and set 
$${\mathcal A}:=\{ x, x'  \in \R^{N},  \xi,\xi'  \in \R: |X_{(x',\xi')}-X_{(x,\xi)}|  \le K\ve \ \text{and}  \
|\Xi_{(x',\xi')}-\Xi_{(x,\xi)}|  \le K\ve \}.$$   

\no Then 
\begin{align*}
 & |\int[-\vr_{\ve}^{s}(X_{(x',\xi')}-y)\vr_{\ve}^{v}(\Xi_{(x',\xi')}-\eta)\partial_{\xi}X_{(x,\xi)}\partial_{y}\vr_{\ve}^{s}(X_{(x,\xi)}-y)\vr_{\ve}^{v}(\Xi_{(x,\xi)}-\eta)\\[.5mm]
 & -\partial_{\xi'}X_{(x',\xi')}\partial_{y}\vr_{\ve}^{s}(X_{(x',\xi')}-y)\vr_{\ve}^{v}(\Xi_{(x',\xi')}-\eta)\vr_{\ve}^{s}(X_{(x,\xi)}-y)\vr_{\ve}^{v}(\Xi_{(x,\xi)}-\eta)] dy d\eta | 
 \\[.5mm]
& \le  \sup_{\mcA}
|\partial_{\xi}X_{(x,\xi)}-\partial_{\xi'}X_{(x',\xi')}|
 \int\vr_{\ve}^{s}(X_{(x',\xi')}-y)|\partial_{y}\vr_{\ve}^{s}(X_{(x,\xi)}-y)|\vr_{\ve}^{s}(\Xi_{(x',\xi')}-\eta)\vr_{\ve}^{s}(\Xi_{(x,\xi)}-\eta)dyd\eta .
\end{align*}
Next we note that 
\begin{align*}
 & \int\vr_{\ve}^{s}(X_{(x',\xi')}-y)|\partial_{y}\vr_{\ve}^{s}(X_{(x,\xi)}-y)|\vr_{\ve}^{s}(\Xi_{(x',\xi')}-\eta)\vr_{\ve}^{s}(\Xi_{(x,\xi)}-\eta)dyd\eta dx'd\xi'\\
 & =\int\vr_{\ve}^{s}(x'-y)|\partial_{y}\vr_{\ve}^{s}(X_{(x,\xi)}-y)|\vr_{\ve}^{s}(\xi'-\eta)\vr_{\ve}^{s}(\Xi_{(x,\xi)}-\eta)dyd\eta dx'd\xi'\\
 & =\int|\partial_{y}\vr_{\ve}^{s}(X_{(x,\xi)}-y)|\vr_{\ve}^{s}(\Xi_{(x,\xi)}-\eta)dyd\eta\\
 & =\int|\partial_{y}\vr_{\ve}^{s}(X_{(x',\xi')}-y)|dy  \le\frac{K}{\ve}.
\end{align*}
Thus
\begin{align}
 & \int |\int-\vr_{\ve}^{s}(X_{(x',\xi')}-y)\vr_{\ve}^{v}(\Xi_{(x',\xi')}-\eta)\partial _{\xi}X_{(x,\xi)}\partial_{y}\vr_{\ve}^{s}(X_{(x,\xi)}-y)\vr_{\ve}^{v}(\Xi_{(x,\xi)}-\eta)\nonumber \\
 & -\partial_{\xi'}X_{(x',\xi')}\partial_{y}\vr_{\ve}^{s}(X_{(x',\xi')}-y)\vr_{\ve}^{v}(\Xi_{(x',\xi')}-\eta)\vr_{\ve}^{s}(X_{(x,\xi)}-y)\vr_{\ve}^{v}(\Xi_{(x,\xi)}-\eta)dyd\eta |dx'd\xi'\label{eq:rough_est_1}\\
\le & \frac{K}{\ve}
 \sup_{\mcA} |\partial_{\xi}X_{(x,\xi)}-\partial_{\xi'}X_{(x',\xi')}|.\nonumber 
\end{align}
From Lemma \ref{lem:hoelder_rp} we know that 
\[
C:=\sup_{x,\xi}\left\Vert D^{2}\left(\begin{array}{cc}
X_{(r,x,\xi)}(\cdot)\\
\Xi_{(r,x,\xi)}(\cdot)
\end{array}\right)\right\Vert _{\a-\Hoel;[0,r]}<\infty.
\]
Moreover, $D^{2}\left(\begin{array}{cc}
X_{(r,x,\xi)}(0)\\
\Xi_{(r,x,\xi)}(0)
\end{array}\right)=0$ for all $(r,x,\xi)$. Thus,
\begin{align*}
\sup_{x,\xi}\left\Vert D^{2}\left(\begin{array}{cc}
X_{(r,x,\xi)}(t)\\
\Xi_{(r,x,\xi)}(t)
\end{array}\right)\right\Vert  & =\sup_{x,\xi}\|D^{2}\left(\begin{array}{cc}
X_{(r,x,\xi)}(t)\\
\Xi_{(r,x,\xi)}(t)
\end{array}\right)-D^{2}\left(\begin{array}{cc}
X_{(r,x,\xi)}(0)\\
\Xi_{(r,x,\xi)}(0)
\end{array}\right)\|\\
 & \le t^{\a}\sup_{x,\xi}\|D^{2}\left(\begin{array}{cc}
X_{(r,x,\xi)}(\cdot)\\
\Xi_{(r,x,\xi)}(\cdot)
\end{array}\right)\|_{\a-\Hoel;[0,r]}\\
 & =Ct^{\a}.
\end{align*}
We conclude that there exists a uniform constant $C>0$ such that, for all $x,\xi,x', \xi'$,
\begin{align*}
 & |\partial_{\xi}X_{(r,x,\xi)}(r-t_{0})-\partial_{\xi'}X_{(r,x',\xi')}(r-t_{0})|\\
  & +|\partial_{\xi}\Xi_{(r,x,\xi)}(r-t_{0})-\partial_{\xi'}\Xi_{(r,x',\xi')}(r-t_{0})|\leq C (r-t_{0})^{\a}
\left\Vert 
\begin{array}{cc}
x-x'\\
\xi-\xi'
\end{array}\right\Vert .
\end{align*}
Again by Lemma \ref{lem:hoelder_rp} we have that
\[
C:=\sup_{y,\eta}\left\Vert D\left(\begin{array}{cc}
Y_{(s,y,\eta)}(\cdot)\\
\z_{(s,y,\eta)}(\cdot)
\end{array}\right)\right\Vert _{\a-\Hoel;[s,t]}<\infty,
\]
and, since $\left(\begin{array}{cc}
Y_{(t_{0},y,\eta)}(r)\\
\z_{(t_{0},y,\eta)}(r)
\end{array}\right)$ is the inverse of $\left(\begin{array}{cc}
X_{(r,x,\xi)}(r-t_{0})\\
\Xi_{(r,x,\xi)}(r-t_{0})
\end{array}\right)$, it follows that, 
$$\text{ if} \  \left\Vert \begin{array}{cc}
X_{(x',\xi')}-X_{(x,\xi)}\\
\Xi_{(x',\xi')}-\Xi_{(x,\xi)}
\end{array}\right\Vert \le K\ve,  \ \text{then} \ 
\left\Vert \begin{array}{cc}
x-x'\\ 
\xi-\xi'
\end{array}\right\Vert \le C\ve.$$
Hence,
$$ \sup_{\mathcal A}\left[  \left|\partial_{\xi}X_{(r,x,\xi)}(r-t_{0})-\partial_{\xi'}X_{(r,x',\xi')}(r-t_{0})\right| 
  +\left|\partial_{\xi}\Xi_{(r,x,\xi)}(r-t_{0})-\partial_{\xi'}\Xi_{(r,x',\xi')}(r-t_{0})\right|\right]\le C(r-t_{0})^{\a}\ve,   $$

\no which, in light of  \eqref{eq:rough_est_1}, finishes the proof. Note that the constant $C=C(R)$ may be chosen uniformly for all $\a$-Hölder rough paths ${z}$ with $\|{z}\|_{\a-\Hoel;[0,T]}\le R$.
\end{proof}
\smallskip

\bibliographystyle{amsalpha.bst}
\bibliography{refs}

%
%
%

\end{document}